\documentclass[12 pt]{amsart}
\usepackage{amssymb,amsmath,amsthm,amstext,amsfonts}
\usepackage{amsmath,amstext,amsthm,amsfonts}
\usepackage[dvips]{graphicx}
\usepackage{epic,eepic}
\usepackage[dvips]{graphicx}
\usepackage{psfrag}
\usepackage{txfonts}
\usepackage{color}
\usepackage{mathrsfs}
\usepackage{upgreek}

\setlength{\topmargin}{0cm} \setlength{\headsep}{0cm}
\setlength{\textwidth}{18cm} \setlength{\textheight}{23cm}
\setlength{\headheight}{0cm} \setlength{\oddsidemargin}{0cm}
\setlength{\evensidemargin}{0cm} \setlength{\footskip}{1cm}
\setlength{\oddsidemargin}{-0.7cm}
\setlength{\evensidemargin}{-0.7cm}

\pagestyle{plain} \pagenumbering{arabic}

\makeatletter \@addtoreset{equation}{section} \makeatother

\renewcommand\thetable{\thesection.\@arabic\c@table}

\theoremstyle{plain}
\newtheorem{maintheorem}{Theorem}

\newtheorem{maincorollary}{Corollary}

\newtheorem{theorem}{Theorem }[section]
\newtheorem{proposition}[theorem]{Proposition}
\newtheorem{lemma}[theorem]{Lemma}
\newtheorem{corollary}[theorem]{Corollary}

\theoremstyle{definition} \theoremstyle{remark}
\newtheorem{remark}[theorem]{Remark}
\newtheorem{example}[theorem]{Example}
\newtheorem{definition}[theorem]{Definition}

\DeclareMathAlphabet{\mathpzc}{OT1}{pzc}{m}{it}

\newcommand{\field}[1]{\mathbb{#1}}

\newcommand{\SL}{\text{SL}}
\newcommand{\RR}{\field{R}}

\newcommand{\diam}{\operatorname{diam}}

\renewcommand{\epsilon}{\varepsilon}

\begin{document}
\large

\title[Generic area-preserving reversible diffeomorphisms]{Generic area-preserving reversible diffeomorphisms}

\author[M. Bessa]{M\'ario Bessa}
\address{Departamento de Matem\'atica, Univ. da Beira Interior,
Rua Marqu\^es d'\'Avila e Bolama, 6201-001 Covilh\~a, Portugal}
\email{bessa@ubi.pt}

\author[M. Carvalho]{Maria Carvalho}
\address{Centro de Matemática da Univ. do Porto,
Rua do Campo Alegre, 687,
4169-007 Porto, Portugal}
\email{mpcarval@fc.up.pt}

\author[A. Rodrigues]{Alexandre Rodrigues}
\address{Departamento de Matem\'atica, Univ. do Porto,
Rua do Campo Alegre, 687,
4169-007 Porto, Portugal}
\email{alexandre.rodrigues@fc.up.pt}

\date{\today}

\maketitle

\begin{abstract}
Let $M$ be a surface and $R:M\rightarrow M$ an involution whose set of fixed points is a submanifold with dimension $1$ and such that $DR_x \in \SL(2,\mathbb{R})$ for all $x$. We will show that there is a residual subset of $C^1$ area-preserving $R$-reversible diffeomorphisms which are either Anosov or have zero Lyapunov exponents at almost every point.
\end{abstract}

\small
\tableofcontents
\normalsize
\medskip

{\small \noindent\emph{MSC2010:} primary 37D25, 37C80; secondary 37C05.\\\emph{keywords:} Dominated splitting; Lyapunov exponent; reversibility.\\}

\bigskip

\section{Introduction}\label{intro}

Let $M$ be a compact, connected, smooth Riemannian two-dimensional manifold without boundary and $\mu$ its normalized area. Denote by $\text{Diff}_{\mu}^{1}(M)$ the set of all area-preserving $C^1$-diffeomorphisms of $M$ endowed with the $C^1$-topology. A diffeomorphism $f:M \rightarrow M$ is Anosov if $M$ is a hyperbolic set for $f$. In \cite{N}, it was proved that a generic area-preserving diffeomorphism is either Anosov or the set of elliptic periodic points is dense in the surface. More recently \cite{M2, M3}, another $C^1$-generic dichotomy in this setting has been established. For $f \in \text{Diff}_{\mu}^{1}(M)$ and Lebesgue almost every $x \in M$, the upper \emph{Lyapunov exponent} at $x$ is given by
$$\lambda^+(f,x)=\lim_{n \rightarrow +\infty}  \,\log \|Df_x^n\|^{1/n}$$
and is non-negative. The main theorem of \cite{Bo} states that there is a $C^1$-residual subset of maps in $\text{Diff}_{\mu}^{1}(M)$ which are either Anosov or have upper Lyapunov exponent zero at Lebesgue almost every point. In this paper we address a similar question within the subspace of $\text{Diff}_{\mu}^{1}(M)$ which exhibit some symmetry. More precisely, let $R:M\rightarrow M$ be a diffeomorphism such that $R\circ R$ is the Identity of $M$ and denote by $\text{Diff}^{~1}_{\mu, R}(M)$ the subset of maps $f \in \text{Diff}_{\mu}^{1}(M)$, called $R$-reversible, such that $R$ conjugates $f$ and $f^{-1}$, that is,
$$R\circ f=f^{-1}\circ R.$$
The spaces $\text{Diff}^{~1}(M)$, $\text{Diff}^{~1}_{\mu}(M)$ and  $\text{Diff}^{~1}_{\mu, R}(M)$ are Baire \cite{Hi, Kur}. Under the additional two hypotheses
\begin{itemize}
\item[- ] $\forall\,\,\, x \in M$, one has $DR_x \in \SL(2, \mathbb{R})$
\item[- ] $Fix(R):=\{x \in M: R(x)=x\}$ is a submanifold of $M$ with dimension equal to $1$,
\end{itemize}
our main result states that:

\begin{maintheorem}\label{BochiMane}
There exists a $C^1$-residual $\mathscr{R}_R\subset\text{Diff}^{~1}_{\mu, R}(M)$ whose elements either are Anosov or have zero upper Lyapunov exponent at Lebesgue almost every point.
\end{maintheorem}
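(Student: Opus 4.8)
The plan is to adapt the Bochi--Mañé strategy from \cite{Bo} to the reversible setting, the point being that the extra symmetry cannot destroy the dichotomy because reversing the dynamics with $R$ conjugates cocycle perturbations to cocycle perturbations of the inverse, hence preserves the class of admissible perturbations. Concretely, set
$$LE(f)=\int_M \lambda^+(f,x)\,d\mu(x),$$
which is upper semicontinuous in $f\in\text{Diff}^{~1}_{\mu,R}(M)$ by the subadditive ergodic theorem together with Kingman/Ruelle. The generic non-Anosov map with positive integrated exponent will be shown to be a point of discontinuity of $LE$, contradicting upper semicontinuity on a residual set. So the heart of the matter is the following reversible analogue of Mañé's dichotomy: \emph{for any $R$-reversible $f$ that is not Anosov, and any $\epsilon>0$, there is an $R$-reversible $g$ that is $C^1$-$\epsilon$-close to $f$ with $LE(g)$ as small as we wish} (in fact with a uniform gap below $LE(f)$ unless $LE(f)$ was already tiny). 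Standard Baire-category bookkeeping --- intersecting, over a countable dense set of parameters, the open sets $\{LE<1/n\}\cup\{\text{Anosov}\}$, the latter being open --- then produces $\mathscr R_R$.

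The mechanism for lowering the exponent is the reversible Franks-type perturbation lemma: along a long generic orbit segment of $f$, where there is no dominated splitting for the derivative cocycle, one may rotate the tangent spaces by a sequence of small $\text{SL}(2,\mathbb R)$-perturbations so that after $n$ steps the most expanded direction is sent near the most contracted one, forcing $\|Dg^n_x\|$ to be subexponentially small on average. The key point specific to our setting is that such a perturbation of the cocycle must be realized by an honest area-preserving diffeomorphism that is still $R$-reversible. One first does the standard Franks realization to get area-preserving $\tilde g$ with the prescribed derivatives along the segment, supported in a small tube; then one symmetrizes: replace $\tilde g$ by the map that agrees with $\tilde g$ on (a neighborhood of) the chosen orbit segment and with $R\circ \tilde g^{-1}\circ R$ on the $R$-image of that neighborhood, gluing with the unperturbed $f$ in between. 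Here one uses the two standing hypotheses --- $DR_x\in\text{SL}(2,\mathbb R)$ guarantees that conjugation by $R$ does not change the area form nor the size of a $C^1$-perturbation, and $\dim \text{Fix}(R)=1$ lets us choose the orbit segment of $f$ and its support tube disjoint from a neighborhood of $\text{Fix}(R)$ (a generic orbit avoids the codimension-one set $\text{Fix}(R)$), so that the original tube and its $R$-image do not overlap. That disjointness is exactly what makes the symmetrization automatically $R$-reversible and keeps it area-preserving and $\epsilon$-small.

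The remaining ingredient is the trichotomy that upgrades ``no dominated splitting on a positive-measure set'' into the required perturbation: either $f$ has a dominated splitting on all of $M$ --- in which case, by the $C^1$-generic version of the $\text{SL}(2,\mathbb R)$ reduction (as in \cite{Bo}) together with the fact that a dominated splitting on a surface with $\mu$-a.e.\ positive exponent and area preservation and transitivity forces uniform hyperbolicity, i.e.\ $f$ is Anosov --- or else the set of points with no domination has positive measure, and there the Oseledets subspaces can be rotated freely, which is precisely the case handled by the reversible Franks perturbation above. I expect the main obstacle to be this last, more delicate, step: checking that the quantitative Bochi--Mañé estimates (controlling the drop in $\|Dg^n\|$ in terms of the angle between Oseledets directions and the length of the non-dominated segment) survive the symmetrization, i.e.\ that the contributions from the segment and from its $R$-image cooperate rather than cancel. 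Because $R$ conjugates $f$ to $f^{-1}$, the exponent of the cocycle along the $R$-image segment is (up to sign conventions) the \emph{same} as along the original segment --- the Lyapunov spectrum of $f^{-1}$ is minus that of $f$, but $\lambda^+$ is computed from $\|Df^n\|$ either way, and Oseledets directions transport covariantly under $R$ --- so the two perturbations push in the same direction and the estimate only improves. Once that is verified, concatenating over the countably many orbit segments produced by a Vitali/Kakutani-tower argument for the non-dominated part, and invoking upper semicontinuity of $LE$ on the residual set where it is continuous, completes the proof of Theorem~\ref{BochiMane}.
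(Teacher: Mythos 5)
Your overall route is the one the paper follows: upper semicontinuity of the integrated exponent $\mathscr{L}$, Baire bookkeeping over the open sets $\{\mathscr{L}<\frac1k+\frac1n\}\cup\{\text{Anosov}\}$, and a reversible Franks-type perturbation obtained by symmetrizing an area-preserving perturbation with $R\circ h^{-1}\circ R$ on the mirror tube, combined with rotations of Oseledets directions on the non-dominated part and a Kakutani tower built together with its $R$-image. Two steps, however, are genuinely under-justified as written.

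First, the disjointness of the perturbation tube from its $R$-image. Avoiding a neighbourhood of $Fix(R)$ is not enough: you need $f^i(x)\neq R(f^j(x))$ for all relevant $i,j$, i.e.\ that $R(x)$ does not lie on the orbit of $x$ (the $(R,f)$-free condition). A generic orbit can miss $Fix(R)$ entirely and still satisfy $f^i(x)=R(f^j(x))$ for some $i\neq j$, which would make the tube and its mirror collide and break the gluing that defines the symmetrized map. The paper obtains the needed genericity from transversality of the graphs of $f^{\pm1},\dots,f^{\pm n}$ with the graph of $R$ (Proposition~\ref{D} and Corollary~\ref{D2}); the hypothesis $\dim Fix(R)=1$ enters there, not through orbit avoidance of $Fix(R)$.

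Second, your trichotomy is incomplete. From ``no dominated splitting on all of $M$'' it does not follow that the set of points without domination has positive measure: for a $C^1$ map the dominated (hence, for area-preserving surface maps, hyperbolic) locus can be a proper invariant set of \emph{positive} measure, and on such a set the rotation mechanism has nothing to act on, so $\mathscr{L}$ cannot be lowered there. The paper disposes of this by first smoothing $f$ to $C^\infty$ off a set of measure $<\epsilon$ in a reversibility-preserving way (Proposition~\ref{ZL_theorem}) and then using absolute continuity of the unstable foliation to show that a compact hyperbolic set of the smoothed map either fills $M$ or meets the smooth region in zero measure (Proposition~\ref{BV para reversiveis}); this is why the final estimate is $\mathscr{L}(g)<\epsilon+\delta$ rather than $<\delta$, and why no transitivity hypothesis is needed. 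A smaller point: $DR_x\in\SL(2,\RR)$ does not make $DR_x$ an isometry, so conjugation by $R$ distorts the $C^1$ size of a perturbation by a factor of order $\|R\|_{C^1}^2$; this is harmless but is precisely the estimate Lemma~\ref{local} has to carry out, rather than something that comes for free.
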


As the torus $\mathbb{T}^2=\mathbb{R}^2/\mathbb{Z}^2$ is the only surface that may support an Anosov diffeomorphism \cite{F2}, we readily deduce that:

\begin{maincorollary} If $M\neq \mathbb{T}^2$, a $C^1$ generic $f\in \,\text{Diff}^{~1}_{\mu, R}(M)$ has zero upper Lyapunov exponent Lebesgue almost everywhere.
\end{maincorollary}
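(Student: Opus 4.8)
The plan is to deduce the corollary directly from Theorem~\ref{BochiMane} by observing that, on a surface $M\neq\mathbb{T}^2$, the Anosov alternative of the dichotomy is vacuous, so that the residual set furnished there automatically falls into the zero-exponent branch. Thus no new dynamical work is needed; the entire content is the interplay between the dichotomy already proved and the topological classification of surfaces admitting Anosov diffeomorphisms.

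First I would fix $\mathscr{R}_R\subset\text{Diff}^{~1}_{\mu, R}(M)$ to be precisely the $C^1$-residual subset produced by Theorem~\ref{BochiMane}. Because $\text{Diff}^{~1}_{\mu, R}(M)$ is a Baire space, $\mathscr{R}_R$ is dense, and it is exactly the witness for the asserted genericity; by construction every $f\in\mathscr{R}_R$ either is Anosov or satisfies $\lambda^+(f,x)=0$ for Lebesgue almost every $x\in M$.

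Next I would rule out the Anosov possibility. Any $f\in\text{Diff}^{~1}_{\mu, R}(M)$ is in particular a diffeomorphism of the compact surface $M$, so if some such $f$ were Anosov then $M$ would be a surface supporting an Anosov diffeomorphism. By the result of \cite{F2} recalled above, the torus $\mathbb{T}^2$ is the only surface that may carry an Anosov diffeomorphism. Since by hypothesis $M\neq\mathbb{T}^2$, no element of $\text{Diff}^{~1}_{\mu, R}(M)$ can be Anosov, and in particular the Anosov branch of the dichotomy is empty when restricted to $\mathscr{R}_R$.

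Combining the two observations, for every $f\in\mathscr{R}_R$ only the second alternative of Theorem~\ref{BochiMane} remains available, whence $\lambda^+(f,x)=0$ for Lebesgue almost every $x\in M$. As $\mathscr{R}_R$ is residual in $\text{Diff}^{~1}_{\mu, R}(M)$, this is exactly the statement that a $C^1$ generic $f$ has zero upper Lyapunov exponent Lebesgue almost everywhere. There is no genuine obstacle in this argument: the only mechanism at play is the topological obstruction to global hyperbolicity on surfaces other than the torus, which collapses the dichotomy of Theorem~\ref{BochiMane} onto its zero-exponent half.
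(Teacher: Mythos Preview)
Your proof is correct and follows exactly the paper's approach: the corollary is deduced immediately from Theorem~\ref{BochiMane} together with the fact, cited from \cite{F2}, that $\mathbb{T}^2$ is the only surface supporting an Anosov diffeomorphism. There is nothing to add.
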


A dynamical symmetry is a geometric invariant which plays an important role through several applications in Physics, from the Classical \cite{Birkhoff} and Quantum Mechanics \cite{Prigogine} to Thermodynamics \cite{Kumicak}. In this context we may essentially distinguish two types of natural symmetries: those which preserve orbits and those which invert them. Much attention has been paid to the former (see, for instance, \cite{FMN, Golubitsky II} and re\-ferences therein); the latter, called reversibility \cite{Arnold84, AS, BR}, is a feature that most prominently arises in Hamiltonian systems and has become a useful tool for the analysis of periodic orbits and homoclinic or heteroclinic cycles \cite{Devaney76}. The references \cite{LR} and \cite{RQ} present a thorough survey on reversible dynamical systems.

Another, more studied, dynamical invariant by smooth maps is a symplectic form \cite{MZ}. Our research will be focused on surfaces, where symplectic maps are the area preserving ones. Some dynamical systems are twofold invariant, both reversible and symplectic, as the Chirikov-Taylor standard map \cite{Meiss} defined by
$$f_\sigma(x,y)=\left(x+y-\frac{\sigma}{2\pi}\sin(2\pi x),y-\frac{\sigma}{2\pi}\sin(2\pi x)\right)$$
where $\sigma\in\mathbb{R}$, which accurately describes the dynamics of some magnetic field lines and is $R$-reversible for
$$R(x,y)=\left(-x, y-\frac{\sigma}{2\pi}\sin(2\pi x)\right).$$
They exhibit many interesting properties but, to the best of our knowledge, only a few systematic comparisons between these two settings have been investigated, as in \cite{Devaney76}, \cite{Sevryuk}.

\section{Framework}

A substantial amount of information about the geometry of the stable/unstable manifolds may be obtained from the presence of non-zero Lyapunov exponents and the exis\-tence of a dominated splitting. Hence, it is of primary importance to understand when one can avoid vanishing exponents or to evaluate their prevalence. Several successful strategies to characterize a generic dynamics are worth mentioning: \cite{BV2,Bo2} for volume-preserving diffeomorphisms, sympletic maps and linear cocycles in any dimension; \cite{Bessa,BessaR} for volume-preserving flows; \cite{BessaDias} for Hamiltonians with two degrees of freedom; \cite{Hertz} for diffeomorphisms acting in a three-dimensional manifold. In what follows, we will borrow ideas and techniques from these articles. To extend them to area-preserving reversible dynamical systems, the main difficulties are to handle with hyperbolic pieces which are not the entire manifold (see Sections \ref{Zehnder} and \ref{Bowen}) and to set up a program of $C^1$ small perturbations which keep invariant both the area-preserving character and the reversibility (details in Section~\ref{Franks}) and in the meantime collapse the expanding directions into the contracting ones in such an extent that the upper Lyapunov exponent diminishes (as done in Section~\ref{mainproposition}).

\section{Preliminaries}\label{preliminaries}

In this section, we will discuss some of the consequences of reversibility and summarize a few properties of Lyapunov exponents and dominated splittings.

\subsection{Reversibility}
Let $R$ be an involution and $f\in\text{Diff}^{~1}_{\mu, R}(M)$. Geometrically, reversibi\-lity means that, applying $R$ to an orbit of $f$, we get an orbit of $f^{-1}$, as shown in Figure~\ref{Geometry1}.

\begin{figure}\begin{center}
\includegraphics[height=6cm]{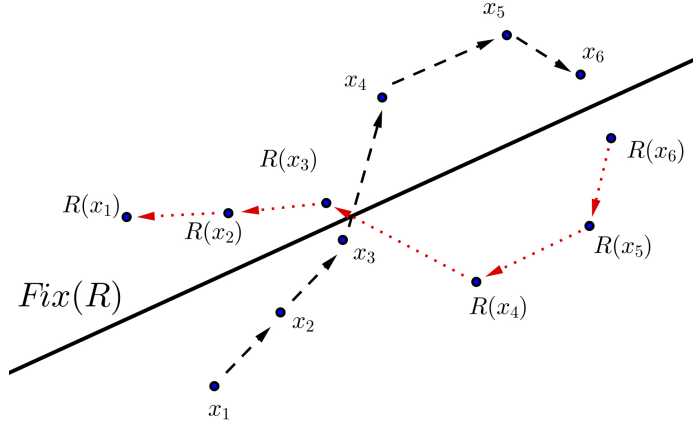}
\end{center}
\caption{Action of $R$.}
\label{Geometry1}
\end{figure}

The $f$-orbit of a point $x \in M$, say $\mathcal{O}(x)=\{f^n(x), x \in \mathbb{Z}\}$, is said to be $R$-symmetric if $R(\mathcal{O}(x))=\mathcal{O}(x)$. If $x$ is a fixed point by $f$ and its orbit is $R$-symmetric, then $x$ is a fixed point by $R$ as well. Yet, in general, the fixed point set of $f$ is not preserved by $R$.

Consider $f,g\in\text{Diff}^{~1}_{\mu, R}(M)$. Then $R\circ f^{-1}=f \circ R$ but $R\circ (f\circ g)=(f^{-1}\circ R)\circ g = (f^{-1}\circ g^{-1}) \circ R =(g \circ f)^{-1} \circ R$, so the set $\text{Diff}^{~1}_{\mu,R}(M)$ endowed with the composition of maps is, in general, not a group. Moreover, if $f\in\text{Diff}^{~1}_{\mu, R}(M)$ is conjugate through $h$ to $g\in\text{Diff}^{~1}_{\mu}(M)$, then, although $(R\circ h)\circ g=f^{-1}(R\circ h)$, g may be not $R$-reversible.

\medskip

\subsection{Dominated splitting}

In the sequel we will use the canonical norm of a bounded linear map $A$ given by $\|A\|=\sup_{\|v\|=1}\|A\, v\|$. For $f\in\text{Diff}^{~1}(M)$, a compact $f$-invariant set $\Lambda\subseteq{M}$ is said to be \emph{uniformly hyperbolic} if there is $m\in\mathbb{N}$ such that, for every $x\in\Lambda$, there is a $Df$-invariant continuous splitting $T_{x}M=E^{u}_{x}\oplus{E^{s}_{x}}$ such that
$$\|Df^{m}_x|_{{E}^{s}_{x}}\|\leq \frac{1}{2}\,\,\,\text{ and}\,\,\, \|(Df^{m}_x)^{-1}|_{{E}^{u}_{x}}\|\leq \frac{1}{2}.$$
There are several interesting ways to weaken the definition of uniform hyperbolicity. Here we use the one introduced in \cite{Liao, M1, Pl}. Given $m \in \mathbb{N}$, a compact $f$-invariant set $\Lambda\subseteq{M}$ is said to have a $m$-\emph{dominated splitting} if, for every $x\in\Lambda$, there exists a $Df$-invariant continuous splitting $T_x \Lambda=E_x^{u}\oplus{E_x^{s}}$ satisfying
\begin{equation}\label{ds}
\|Df^{m}_x|_{{E}^{s}_{x}}\|\,\,\|(Df^{m}_x)^{-1}|_{{E}^{u}_{x}}\|^{-1}\leq \frac{1}{2}.
\end{equation}
Observe that, if $\Lambda$ displays an $m$-dominated splitting for $f$, then the same splitting is domi\-nated for $f^{-1}$. Under a dominating splitting, both sub-bundles may expand or contract, although $E^u$ expands more efficiently than $E^s$ and, if both sub-bundles contract, $E^u$ is less contracting than $E^s$. Moreover, as happens in the uniform hyperbolicity setting, the angle between these sub-bundles is uniformly bounded away from zero, because the splitting varies continuously with the point and $\Lambda$ is compact, and the dominated splitting extends to the closure of $\Lambda$ (see~\cite{BDV} for full details). Within two-dimensional area-preserving diffeomorphisms, hyperbolicity is in fact equivalent to the existence of a dominated splitting \cite[Lemma 3.11]{Bo}.

\begin{lemma}
Consider $f\in\text{Diff}^{~1}_{\mu,R}(M)$ and a closed $f$-invariant set $\Lambda\subseteq M$ with a $m$-dominated splitting. Then $R(\Lambda)$ is closed, $f$-invariant and has a $m$-dominated splitting as well.
\end{lemma}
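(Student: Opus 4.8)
The plan is to reduce the statement to the reversibility identity $R\circ f = f^{-1}\circ R$, in its iterated form $f^{m}\circ R = R\circ f^{-m}$ for every $m\in\NN$, together with the observation recorded just above that a splitting which is $m$-dominated for $f$ is automatically $m$-dominated for $f^{-1}$. The two elementary assertions are immediate: $R$ is a homeomorphism, so $R(\Lambda)$ is closed (indeed compact, as $M$ is), and from $f\circ R = R\circ f^{-1}$ and $f^{-1}(\Lambda)=\Lambda$ we get $f(R(\Lambda)) = R\big(f^{-1}(\Lambda)\big) = R(\Lambda)$.

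For the dominated splitting I would transport the one on $\Lambda$ through the diffeomorphism $R$. Writing $T_x\Lambda = E^{u}_x\oplus E^{s}_x$ for the $m$-dominated splitting, set, for $y=R(x)$ with $x\in\Lambda$,
\begin{equation*}
\widehat E^{u}_{y} := DR_x(E^{s}_x), \qquad \widehat E^{s}_{y} := DR_x(E^{u}_x);
\end{equation*}
the exchange of the two sub-bundles is forced by the fact that $R$ conjugates $f$ to $f^{-1}$, under which the dominating and the dominated directions switch roles. Differentiating $f^{m}=R\circ f^{-m}\circ R$ at $R(x)$ and using $DR_{R(x)}=(DR_x)^{-1}$ (a consequence of $R\circ R=\mathrm{Id}$) gives $Df^{m}_{R(x)} = DR_{f^{-m}(x)}\circ Df^{-m}_{x}\circ (DR_x)^{-1}$, and from this identity together with the chain rule one checks that $\widehat E^{u}\oplus\widehat E^{s}$ is a continuous $Df$-invariant splitting over $R(\Lambda)$ — the base point being carried correctly because $R(f^{-m}(x)) = f^{m}(R(x))$.

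Finally one has to verify the domination inequality for $\widehat E^{u}\oplus\widehat E^{s}$. Since the sub-bundles are one-dimensional, the ratio $\|Df^{m}_x|_{E^{s}_x}\|\,\big/\,\|Df^{m}_x|_{E^{u}_x}\|$ of the two expansion rates is multiplicative along orbits, so $m$-domination of $\Lambda$ for $f^{-1}$ improves to a bound $2^{-j}$ at every level $jm$. Substituting the identity above (with $m$ replaced by $jm$) into the analogous quotient for $\widehat E^{s}$ against $\widehat E^{u}$, every derivative of $R$ that appears is controlled by the finite constant $C:=\big(\sup_{z\in M}\|DR_z\|\big)\big(\sup_{z\in M}\|(DR_z)^{-1}\|\big)$ — finite since $M$ is compact — so that quotient is at most $C^{2}\,2^{-j}$; choosing $j_0$ with $C^{2}2^{-j_0}\le\tfrac12$ produces a $j_0m$-dominated splitting for $f$ on $R(\Lambda)$. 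If one insists on keeping the index equal to $m$, it suffices to carry out the same estimate for the $R$-averaged Riemannian metric $\tfrac12(g+R^{*}g)$, for which $DR$ is an isometry and $C=1$. There is no substantial obstacle in this argument: its content is the reversibility bookkeeping, and the only delicate point — the length distortion caused by $DR$, which has determinant one but need not be an isometry — is neutralized either by the uniform bound $C$ or by passing to an $R$-invariant metric.
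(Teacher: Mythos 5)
Your proof is correct and follows the same route as the paper's: define the splitting over $R(\Lambda)$ by transporting the one on $\Lambda$ through $DR$ with the roles of $E^u$ and $E^s$ exchanged, verify $Df$-invariance from the differentiated identity $f^m\circ R=R\circ f^{-m}$, and convert the domination quotient at $R(x)$ into the corresponding quotient for $f^{-1}$ at a point of $\Lambda$. The one place where you diverge is the final norm estimate: the paper simply cancels the $DR$ factors, in effect treating $DR$ as an isometry, and thereby keeps the index $m$ and the constant $\tfrac{1}{2}$ verbatim; under the stated hypothesis $DR_x\in\SL(2,\RR)$ that cancellation is not automatic (a determinant-one map can stretch a line arbitrarily), and your bookkeeping with the constant $C$ (yielding $j_0m$-domination) or with the $R$-averaged metric $\tfrac{1}{2}(g+R^{*}g)$ (yielding $m$-domination for an equivalent metric) is the honest repair of exactly the step the paper glosses over. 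Since what is used downstream is the existence of a dominated splitting with some uniform index, not the precise value $m$, this refinement costs nothing.
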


\begin{proof}
Clearly, as $R$ is an involution, $fR(\Lambda)=Rf^{-1}(\Lambda)=R(\Lambda)$. Let $x\in\Lambda$ whose orbit exhibits the decomposition $T_{f^i(x)} M=E^u_{f^i(x)}\oplus E^s_{f^i(x)}$, for $i\in\mathbb{Z}$. Then we also have a $Df$-invariant decomposition for $R(x)$, namely $T_{f^i(R(x))} M=E^u_{f^i(R(x))}\oplus E^s_{f^i(R(x))}$, for $i\in\mathbb{Z}$, where $$E^u_{f^i(R(x))}:=DR_{f^{-i}(x)}(E^s_{f^{-i}(x)})$$
and
$$E^s_{f^i(R(x))}:=DR_{f^{-i}(x)}(E^u_{f^{-i}(x)}).$$
Indeed, for $x\in\Lambda$ and $i\in\mathbb{Z}$, we have
\begin{eqnarray*}
Df_{f^i(R(x))}(E^s_{f^i(R(x))})&=&Df_{R(f^{-i}(x))}(E^s_{f^i(R(x))})=Df_{R(f^{-i}(x))} DR_{f^{-i}(x)}(E^u_{f^{-i}(x)})\\
&=&DR_{f^{-i-1}(x)} Df^{-1}_{f^{-i}(x)} (E^u_{f^{-i}(x)})=DR_{f^{-i-1}(x)}(E^u_{f^{-i-1}(x)})= E^s_{f^{i+1}(R(x))},
\end{eqnarray*}
and a similar invariance holds for the sub-bundle $E^u$. Therefore, since $R$ is a diffeomorphism in the compact $\Lambda$, we deduce that the angle between the sub-bundles at $R(x)$ is bounded away from zero. Finally, notice that
\begin{eqnarray*}
\|Df^{m}_{R(x)}|_{{E}^{s}_{R(x)}}\|\,\,\|(Df^{m}_{R(x)})^{-1}|_{{E}^{u}_{R(x)}}\|^{-1}&=&\|Df^{m}_{R(x)}|_{DR_x({E}^{u}_{x})}\|\,\,\|(Df^{m}_{R(x)})^{-1}|_{DR_x({E}^{s}_{x})}\|^{-1}\\
&=&\|R(Df^{m}_x)^{-1}|_{{E}^{u}_{x}}\|\,\,\|R(Df^{m}_x|_{{E}^{s}_{x}})\|^{-1}\\
&=&\|(Df^{m}_x)^{-1}|_{{E}^{u}_{x}}\|\,\,\|Df^{m}_x|_{{E}^{s}_{x}}\|^{-1} \overset{(\ref{ds})}{\leq} \frac{1}{2}.
\end{eqnarray*}
\end{proof}

\subsection{Lyapunov exponents}\label{LExp}

By the Oseledets' theorem \cite{O}, for $\mu$-a.e. point $x\in{M}$, there is a splitting
$T_{x}M=E^{1}_{x}\oplus...\oplus{E^{k(x)}_{x}}$ (called \emph{Oseledets' splitting}) and real numbers $\lambda_{1}(x)>...>\lambda_{k(x)}(x)$
(called \emph{Lyapunov exponents}) such that $Df_{x}(E^{i}_{x})=E^{i}_{f(x)}$ and
$$\underset{n\rightarrow{\pm\infty}}{\lim}\,\frac{1}{n}\log\|Df^{n}_x(v^j)\|=\lambda_j(f,x)$$
for any $v^{j}\in{E^{j}_{x}\backslash\{\vec{0}\}}$ and $j=1,...,k(x)$.
This allows us to conclude that, for $\mu$-a.e. $x$,
\begin{equation}\label{angle}
\lim_{n\rightarrow{\pm{\infty}}}\,\frac{1}{n}\log{|\det(Df^{n}_{x})|=\sum_{j=1}^{k(x)}\lambda_{j}(x)\dim(E^{j}_{x})},
\end{equation}
which is related to the sub-exponential decrease of the angle between any two subspaces of the Oseledets splitting along $\mu$-a.e. orbit.

Since, in the area-preserving case, we have $|\det(Df^{n}_{x})|=1$ for any $x \in M$, by~(\ref{angle}) we get
$$\lambda_{1}(x)+\lambda_{2}(x)=0.$$
Hence either
$\lambda_{1}(x)=-\lambda_{2}(x)>0$ or they are both equal to zero. If the former holds for $\mu$-a.e. $x$, then there are two one-dimensional subspaces $E^{u}_{x}$ and
$E^{s}_{x}$, associated to the positive Lyapunov exponent $\lambda_{1}(x)=\lambda_{u}(x)$ and the negative $\lambda_{2}(x)=\lambda_{s}(x)$, respectively. We denote by $\mathscr{O}(f)$ the set of regular points, that is,
$$\mathscr{O}(f)= \{x\in M: \lambda_{1}(x),\lambda_{2}(x)\,\, \text{ exist}\}$$
by $\mathscr{O}^{+}(f)\subseteq{\mathscr{O}(f)}$ the subset of points with one positive Lyapunov exponent
$$\mathscr{O}^{+}(f)= \{x\in \mathscr{O}(f): \lambda_{1}(x)>0\}$$
and by $\mathscr{O}^{0}(f)\subseteq{\mathscr{O}(f)}$ the set of those points with both Lyapunov exponents equal to zero
$$\mathscr{O}^{0}(f)= \{x\in \mathscr{O}(f): \lambda_{1}(x)=\lambda_{2}(x)=0\}.$$
So $\mathscr{O}^{+}(f)=\mathscr{O}(f)\backslash\mathscr{O}^{0}(f)$.

In this notation, we may summarize Oseledets theorem in the area-preserving reversible setting as:

\begin{theorem}(\cite{O})\label{Oseledec}
Let $f\in\text{Diff}^{~1}_{\mu,R}(M)$. For Lebesgue almost every $x\in{M}$, the limit
$$\lambda^{+}(f,x)=\underset{n\rightarrow{+\infty}}{\lim}\,\frac{1}{n}\log\|Df^{n}_x\|$$
exists and defines a non-negative measurable function of $x$. For almost any $x\in{\mathscr{O}^{+}}$, there is a splitting $E_{x}=E_{x}^{u}\oplus{E_{x}^{s}}$ which varies measurably with $x$ and satisfies:
$${v}\in{E_{x}^{u}}\backslash\{\vec{0}\} \,\, \Rightarrow  \,\,\underset{n\rightarrow{\pm\infty}}{\lim}\,\frac{1}{n}\log\|Df^{n}_x(v)\|=\lambda^{+}(f,x).$$
$${v}\in{E_{x}^{s}}\backslash\{\vec{0}\}\,\, \Rightarrow \,\,\underset{n\rightarrow{\pm\infty}}{\lim}\,\frac{1}{n}\log\|Df^{n}_x(v)\|=-\lambda^{+}(f,x).$$
$$\vec{0}\not={v}\notin{E_{x}^{u}}\,\cup\,E_{x}^{s}\,\,\Rightarrow \,\, \underset{n\rightarrow{+\infty}}{\lim}\,\frac{1}{n}\log\|Df^{n}_x(v)\|=\lambda^{+}(f,x)\,\text{ and } \,\underset{n\rightarrow{-\infty}}{\lim}\,\frac{1}{n}\log\|Df^{n}_x(v)\|=-\lambda^{+}(f,x).$$
\end{theorem}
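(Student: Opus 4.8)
The plan is to deduce the statement directly from the classical multiplicative ergodic theorem of Oseledets \cite{O}, the only genuinely new ingredient being the normalization forced by area preservation; the reversibility hypothesis plays no role in the conclusion and is retained only because $\text{Diff}^{~1}_{\mu,R}(M)\subseteq\text{Diff}^{~1}_{\mu}(M)$. First I would apply \cite{O} to the cocycle $(x,n)\mapsto Df^n_x$ over the measure-preserving system $(M,\mu,f)$: since $M$ is compact and $f$ is $C^1$, the maps $\log^{+}\|Df^{\pm 1}\|$ are bounded, hence integrable, so for $\mu$-a.e.\ $x$ one obtains the Oseledets splitting $T_xM=E^1_x\oplus\dots\oplus E^{k(x)}_x$, the exponents $\lambda_1(x)>\dots>\lambda_{k(x)}(x)$, the invariance $Df_x(E^j_x)=E^j_{f(x)}$, the exact exponential growth rates along each $E^j_x$ in both time directions, and the identity $\lim_{n\to\pm\infty}\frac{1}{n}\log\|Df^n_x\|=\lambda_1(x)$. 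All the objects produced depend measurably on $x$.

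Next I would invoke the area-preserving constraint. Because $|\det Df^n_x|=1$ for every $x$ and $n$, relation (\ref{angle}) gives $\sum_{j}\lambda_j(x)\dim E^j_x=0$; in dimension two this leaves only the two possibilities $k(x)=2$ with $\lambda_1(x)=-\lambda_2(x)>0$, or a single block with $\lambda_1(x)=\lambda_2(x)=0$. Setting $\lambda^{+}(f,x):=\lim_{n\to+\infty}\frac{1}{n}\log\|Df^n_x\|=\lambda_1(x)$, this limit therefore exists $\mu$-a.e., is a measurable function of $x$, and is non-negative (if $\lambda_1(x)<0$ then both exponents would be negative, contradicting the vanishing of their sum), with $\lambda^{+}(f,x)>0$ exactly on $\mathscr{O}^{+}(f)$ and $\lambda^{+}(f,x)=0$ on $\mathscr{O}^{0}(f)$. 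On $\mathscr{O}^{+}(f)$ I then rename the one-dimensional Oseledets subspaces $E^{u}_x:=E^1_x$ (exponent $\lambda^{+}$) and $E^{s}_x:=E^2_x$ (exponent $-\lambda^{+}$); measurability of $x\mapsto E^{u}_x,E^{s}_x$ is part of the Oseledets conclusion, and this yields at once the first two displayed implications.

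For the third implication, given $\vec{0}\neq v\notin E^{u}_x\cup E^{s}_x$ I would write $v=v^{u}+v^{s}$ with $v^{u}\in E^{u}_x\setminus\{\vec{0}\}$ and $v^{s}\in E^{s}_x\setminus\{\vec{0}\}$, so that $Df^n_x v=Df^n_x v^{u}+Df^n_x v^{s}$. For $n\to+\infty$ the term $Df^n_x v^{u}$ grows at rate $\lambda^{+}>0$ while $Df^n_x v^{s}$ decays at rate $-\lambda^{+}<0$; to conclude $\frac{1}{n}\log\|Df^n_x v\|\to\lambda^{+}$ one only needs that the two summands do not cancel to leading order, which follows from the sub-exponential decay of the angle between $E^{u}_{f^n(x)}$ and $E^{s}_{f^n(x)}$ recorded after (\ref{angle}) (equivalently, $\|Df^n_x v\|\geq c_n\max\{\|Df^n_x v^{u}\|,\|Df^n_x v^{s}\|\}$ with $\frac{1}{n}\log c_n\to 0$). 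For $n\to-\infty$ the roles of $v^{u}$ and $v^{s}$ are exchanged: now $Df^n_x v^{s}$ is the dominant term, growing at rate $\lambda^{+}$ as $n\to-\infty$, so $\frac{1}{n}\log\|Df^n_x v\|\to-\lambda^{+}$, and the same angle estimate again rules out cancellation. The only point requiring genuine care in the whole argument is this control of the angle along the orbit, and it is precisely what (\ref{angle}) provides; everything else is the content of \cite{O} specialized to surfaces with $|\det Df|\equiv 1$.
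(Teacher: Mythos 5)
Your proposal is correct and follows essentially the same route as the paper, which states this result as a direct citation of Oseledets' theorem and derives the dichotomy $\lambda_{1}(x)=-\lambda_{2}(x)\geq 0$ from $|\det(Df^{n}_{x})|=1$ exactly as you do in the discussion preceding the theorem. Your extra care about cancellation in the third implication is harmless but not really needed: once one summand dominates exponentially, the reverse triangle inequality already gives the claimed growth rate without invoking the angle estimate.
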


The next result informs about a natural rigidity on the Lyapunov exponents of reversible diffeomorphisms.

\begin{lemma}
\label{Lema_importante}
Let $f\in\text{Diff}^{~1}_{\mu,R}(M)$. If $x\in \mathscr{O}^+$ has a decomposition $E^u_x\oplus E^s_x$ , then
\begin{itemize}
\item[(a)] $R(x)\in \mathscr{O}^+$.
\item[(b)] The Oseledets splitting at $R(x)$ is $E^u_{R(x)}\oplus E^s_{R(x)}$ with $E^u_{R(x)}=DR_x(E^s_x)$, $E^s_{R(x)}=DR_x(E^u_x)$.
\item[(c)] $\lambda^+(f,R(x))=\lambda^+(f,x)$ and $\lambda^-(f,R(x))=\lambda^-(f,x)=-\lambda^+(f,x)$.
\item[(d)] If $x\in \mathscr{O}^0$, then $R(x)\in \mathscr{O}^0$.
\end{itemize}
\end{lemma}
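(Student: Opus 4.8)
The plan is to push everything back to the iterated reversibility identity and then use a bounded comparison between the derivative cocycles of $f$ and $f^{-1}$. Since $R$ is an involution and $R\circ f=f^{-1}\circ R$, an immediate induction gives $R\circ f^{n}=f^{-n}\circ R$ for every $n\in\mathbb{Z}$. Differentiating at $x$ and using the chain rule,
$$DR_{f^{n}(x)}\circ Df^{n}_{x}=Df^{-n}_{R(x)}\circ DR_{x},\qquad\text{i.e.}\qquad Df^{n}_{x}=(DR_{f^{n}(x)})^{-1}\circ Df^{-n}_{R(x)}\circ DR_{x}.$$
Because $M$ is compact and $R$ is a $C^{1}$ diffeomorphism, there is $C\geq 1$ with $\|DR_{y}\|\leq C$ and $\|(DR_{y})^{-1}\|\leq C$ for all $y\in M$; hence, for every $v\neq\vec{0}$ and every $n\in\mathbb{Z}$,
$$C^{-2}\,\|Df^{-n}_{R(x)}(DR_{x}\,v)\|\;\leq\;\|Df^{n}_{x}(v)\|\;\leq\;C^{2}\,\|Df^{-n}_{R(x)}(DR_{x}\,v)\|,$$
and the analogous estimate, obtained by exchanging $x$ with $R(x)=R(R(x))$'s image and $n$ with $-n$, comparing $\|Df^{n}_{R(x)}\|$ with $\|Df^{-n}_{x}\|$. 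The only data used here are that $R$ is an involution and that $f$ is area-preserving and $R$-reversible; the two standing hypotheses on $R$ are not needed for this lemma.

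First I would deduce (a), (b), (c). Taking $\tfrac{1}{n}\log(\cdot)$ in the vector estimate and letting $n\to\pm\infty$, the existence of the growth rate of $v$ along $(Df^{n}_{x})_{n}$ is equivalent to the existence of the growth rate of $DR_{x}(v)$ along $(Df^{-n}_{R(x)})_{n}$, and the latter equals, after the substitution $m=-n$, the opposite of the growth rate of $DR_{x}(v)$ along $(Df^{m}_{R(x)})_{m}$. Now fix $x\in\mathscr{O}^{+}$ with Oseledets decomposition $E^{u}_{x}\oplus E^{s}_{x}$. For $v\in E^{u}_{x}\setminus\{\vec{0}\}$ one has $\lim_{n\to\pm\infty}\tfrac{1}{n}\log\|Df^{n}_{x}(v)\|=\lambda^{+}(f,x)$, so $w:=DR_{x}(v)$ satisfies $\lim_{m\to\pm\infty}\tfrac{1}{m}\log\|Df^{m}_{R(x)}(w)\|=-\lambda^{+}(f,x)<0$; likewise, for $v\in E^{s}_{x}\setminus\{\vec{0}\}$ the vector $w':=DR_{x}(v)$ satisfies $\lim_{m\to\pm\infty}\tfrac{1}{m}\log\|Df^{m}_{R(x)}(w')\|=\lambda^{+}(f,x)>0$. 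Since $DR_{x}$ is a linear isomorphism and $E^{u}_{x}\neq E^{s}_{x}$, the lines $DR_{x}(E^{s}_{x})$ and $DR_{x}(E^{u}_{x})$ are distinct and provide the Oseledets splitting of $f$ at $R(x)$, with $E^{u}_{R(x)}=DR_{x}(E^{s}_{x})$ and $E^{s}_{R(x)}=DR_{x}(E^{u}_{x})$; in particular $R(x)$ is a regular point carrying a positive exponent, which is (a) and (b). For the precise value in (c) I would apply the norm comparison to the operator norm: at the regular point $x$ one has $\lim_{m\to-\infty}\tfrac{1}{m}\log\|Df^{m}_{x}\|=\lambda_{2}(f,x)=-\lambda^{+}(f,x)$ (under backward iteration the operator norm is governed by the most contracted direction), hence $\lambda^{+}(f,R(x))=\lim_{n\to+\infty}\tfrac{1}{n}\log\|Df^{n}_{R(x)}\|=\lim_{n\to+\infty}\tfrac{1}{n}\log\|Df^{-n}_{x}\|=\lambda^{+}(f,x)$, and $\lambda^{-}(f,R(x))=-\lambda^{+}(f,R(x))=-\lambda^{+}(f,x)$ because $f$ is area-preserving.

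Finally, (d) is the degenerate case of the very same argument: if $x\in\mathscr{O}^{0}$, then $\lambda_{1}(f,x)=\lambda_{2}(f,x)=0$, so $\lim_{n\to+\infty}\tfrac{1}{n}\log\|Df^{-n}_{x}\|=-\lambda_{2}(f,x)=0$, and the operator-norm comparison forces $\lambda^{+}(f,R(x))=0$, i.e. $R(x)\in\mathscr{O}^{0}$ (regularity of $R(x)$ transferring exactly as above). I expect the only genuinely delicate point to be the bookkeeping of signs attached to the reversal $n\leftrightarrow-n$ — namely that it interchanges the roles of $E^{u}$ and $E^{s}$ and turns the top Lyapunov exponent of $(Df^{n})_{n}$ into the bottom exponent of $(Df^{-n})_{n}$ — together with the small but essential observation that the two-sided comparison with the \emph{fixed} bounded factor $C^{2}$ transfers the \emph{existence} of the Oseledets limits (not merely inequalities between the upper and lower exponents) from $x$ to $R(x)$; everything else is routine.
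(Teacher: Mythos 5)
Your proposal is correct and follows essentially the same route as the paper: differentiate the iterated reversibility identity $R\circ f^{n}=f^{-n}\circ R$, use the uniform bound on $\|DR\|$ and $\|(DR)^{-1}\|$ coming from compactness to transfer growth rates, and let the substitution $n\mapsto -n$ flip the sign of the exponent and interchange $E^{u}$ with $E^{s}$. The paper carries out this computation only for a vector $v\in E^{u}_{x}$ and states that the remaining items follow similarly, whereas you make the two-sided comparison and the deduction of (b)--(d) explicit; the content is the same.
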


\begin{proof}
Assume that $x\in \mathscr{O}^+$ and let ${v}\in{E_{x}^{u}}\backslash\{\vec{0}\}$. Consider the direction $v':=DR_x(v)\in T_{R(x)} M$ and let us compute the Lyapunov exponent at $R(x)$ along this direction:
\begin{eqnarray*}
\lambda(f,R(x),v')&=&\underset{n\rightarrow{\pm\infty}}{\lim}\,\frac{1}{n}\log\|Df^{n}_{R(x)}(v')\|=\underset{n\rightarrow{\pm\infty}}{\lim}\,\frac{1}{n}\log\|D(R\circ f^{-n}\circ R)_{R(x)}(v')\|\\
&=&\underset{n\rightarrow{\pm\infty}}{\lim}\,\frac{1}{n}\log\|DR_{f^{-n}(R^2(x))} Df^{-n}_{R^2(x)} DR_{R(x)} DR_x(v)\|\\
&=&\underset{n\rightarrow{\pm\infty}}{\lim}\,\frac{1}{n}\log\|DR_{f^{-n}(x)} Df^{-n}_{x} (v)\|=\underset{n\rightarrow{\pm\infty}}{\lim}\,\frac{1}{n}\log\|Df^{-n}_{x} (v)\|\\
&=&-\underset{n\rightarrow{\pm\infty}}{\lim}\,\frac{1}{-n}\log\|Df^{-n}_{x}(v)\|= -\lambda^+(f,x,v).
\end{eqnarray*}
Thus $R(x)\in \mathscr{O}^+$. The other properties are deduced similarly.
\end{proof}

\subsection{Integrated Lyapunov exponent}\label{ILExp}

It was proved in \cite{Bo} that, when $\text{Diff}^{~1}_{\mu}(M)$ is endowed the $C^1$-topology and $[0,+\infty[$ has the usual distance, then the function
$$\begin{array}{cccc}
\mathscr{L}\colon & \text{Diff}^{~1}_{\mu}(M) & \longrightarrow  & [0,+\infty[ \\
& f & \longrightarrow  & \int_M \lambda^+(f,x)\,d\mu
\end{array}$$
is upper semicontinuous. This is due to the fact that $\mathscr{L}$ is the infimum of continuous functions, namely
\begin{equation}\label{inf}
\mathscr{L}(f)=\underset{n\in\mathbb{N}}{\inf}\,\,\frac{1}{n}\int_M \log\|Df^n_x\|d\mu.
\end{equation}
Clearly, the same holds for the restriction of $\mathscr{L}$ to $\text{Diff}^{~1}_{\mu,R}(M)$. Therefore, there exists a residual set in $\text{Diff}^{~1}_{\mu, R}(M)$ for which the map $\mathscr{L}$ is continuous \cite{Kur}. Now, the upper semicontinuity of $\mathscr{L}$ implies that $\mathscr{L}^{-1}(\left[0,\tau \right[)$ is $C^1$-open for any $\tau > 0$; hence
$$\mathscr{A}_{\tau}:=\left\{f\in\text{Diff}^{~1}_{\mu,R}(M)\colon \mathscr{L}(f)<\tau\right\}$$
is $C^1$-open.

\subsection{$(R,f)$-free orbits}

Given a subset $X$ of $M$, we say that $X$ is $(R,f)$\emph{-free} if
$$f(x)\not=R(y)\,\,\,\,\,\forall\,\, x,y \in X.$$

\begin{lemma}\label{free2}
Let $f\in \text{Diff}^{~1}_{\mu, R}(M)$. If $x\in M$ and $R(x)$ does not belong to the $f$-orbit of $x$, then this orbit is $(R,f)$-free.
\end{lemma}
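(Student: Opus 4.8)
The plan is to reduce everything to the single commutation identity
$$R\circ f^{\,n}=f^{-n}\circ R\qquad\text{for all }n\in\mathbb Z ,$$
which follows from the defining relation $R\circ f=f^{-1}\circ R$ together with $R\circ R=\mathrm{Id}$. First I would establish this identity: for $n\geq 0$ it is an immediate induction, since $R\circ f^{\,n+1}=(R\circ f)\circ f^{\,n}=f^{-1}\circ(R\circ f^{\,n})=f^{-1}\circ f^{-n}\circ R=f^{-(n+1)}\circ R$; for negative exponents one first rewrites $R\circ f=f^{-1}\circ R$ as $f\circ R=R\circ f^{-1}$ (compose on the left with $f$ and on the right with nothing, using $f\circ f^{-1}=\mathrm{Id}$), and then runs the same induction. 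This is the only place where a small amount of care is needed, and it is the ``main obstacle'' only in the very weak sense that one must not forget to cover the two-sided range of exponents.

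Next I would argue by contradiction. Suppose the $f$-orbit $\mathcal O(x)=\{f^{\,n}(x):n\in\mathbb Z\}$ is not $(R,f)$-free. By definition there are points $a,b\in\mathcal O(x)$, say $a=f^{\,i}(x)$ and $b=f^{\,j}(x)$ with $i,j\in\mathbb Z$, such that $f(a)=R(b)$, i.e. $f^{\,i+1}(x)=R\bigl(f^{\,j}(x)\bigr)$.

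Now apply the commutation identity to the right-hand side: $R\bigl(f^{\,j}(x)\bigr)=f^{-j}\bigl(R(x)\bigr)$. Hence $f^{\,i+1}(x)=f^{-j}\bigl(R(x)\bigr)$, and applying $f^{\,j}$ to both sides yields $R(x)=f^{\,i+j+1}(x)$. Since $i+j+1\in\mathbb Z$, this says precisely that $R(x)$ lies on the $f$-orbit of $x$, contradicting the hypothesis. Therefore no such $a,b$ exist and $\mathcal O(x)$ is $(R,f)$-free, which completes the proof.
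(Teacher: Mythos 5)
Your argument is correct and is essentially the paper's own proof: assume a failure of freeness, use the identity $R\circ f^{\,n}=f^{-n}\circ R$ to move $R$ past the iterate, and conclude that $R(x)$ would lie on the orbit of $x$, a contradiction. The only difference is that you spell out the induction establishing the commutation identity, which the paper takes as immediate.
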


\begin{proof} Let us assume that there exist $i,j\in\mathbb{Z}$ such that $f^i(x)=R(f^j(x))$. Then $f^i(x)=f^{-j}(R(x))$ and $f^{j+i}(x)=R(x)$, which contradicts the assumption. \end{proof}

\begin{proposition} \label{D}
There is a residual $\mathscr{D}\subset \text{Diff}^{~1}_{\mu, R}(M)$ such that, for any $f\in \mathscr{D}$, the set of orbits outside $Fix(R)$ which are not $(R,f)$-free is countable.
\end{proposition}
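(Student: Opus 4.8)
The plan is to translate ``$\mathcal{O}(x)$ is not $(R,f)$-free'' into a fixed point condition for the maps $R\circ f^{m}$, to reduce the count to the fixed point set of the single involution $R\circ f$, and then to make that set small by a perturbation that respects both $\mu$ and the reversibility. First I would upgrade Lemma~\ref{free2} to an equivalence: if $R(x)=f^{m}(x)$ for some $m\in\mathbb{Z}$, then with $u:=x$ and $v:=f^{m-1}(x)$ one has $f(u)=f^{m}(x)=R(v)$, so $\mathcal{O}(x)$ is not $(R,f)$-free; combined with Lemma~\ref{free2}, this shows that for $x\notin Fix(R)$ the orbit $\mathcal{O}(x)$ fails to be $(R,f)$-free exactly when $x$ lies in $S_{m}:=\{y\in M:\ f^{m}(y)=R(y)\}=Fix(R\circ f^{m})$ for some $m\in\mathbb{Z}$ (the last identity because $R$ is an involution).

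Next I would exploit reversibility. Iterating $R\circ f=f^{-1}\circ R$ gives $f^{m}\circ R=R\circ f^{-m}$, whence $(R\circ f^{m})^{2}=R\,f^{m}\,R\,f^{m}=R\,R\,f^{-m}\,f^{m}=\mathrm{id}$: every $R\circ f^{m}$ is an involution, so each $S_{m}$ is a closed submanifold of $M$. The same identity shows that if $f^{m}(x)=R(x)$ and $m=2\ell$, then $R(f^{\ell}(x))=f^{-\ell}R(x)=f^{\ell}(x)$, i.e.\ $f^{\ell}(x)\in Fix(R)$; while if $m=2\ell+1$, then $f^{\ell}(x)\in Fix(R\circ f)$. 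Hence $S_{2\ell}=f^{-\ell}(Fix(R))$ and $S_{2\ell+1}=f^{-\ell}(Fix(R\circ f))$, so the orbits that are not $(R,f)$-free and never meet $Fix(R)$ are precisely the $f$-orbits that meet $Fix(R\circ f)$ while avoiding $Fix(R)$. It therefore suffices to find a residual $\mathscr{D}\subset\text{Diff}^{~1}_{\mu,R}(M)$ on which this family of orbits is countable; and for that it is enough that $Fix(R\circ f)$, after removal of the forward and backward $f$-saturation of $Fix(R)$, be at most countable. In particular the ``even'' sets $S_{2\ell}$, $\ell\neq0$, and $S_{0}=Fix(R)$ require no further attention, since the orbits they carry already intersect $Fix(R)$.

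The remaining point, and where I expect the genuine work, is a Kupka--Smale type statement for the involution $g:=R\circ f$, to be obtained by perturbations that keep invariant both the area and the reversibility. Here I would use that $f\mapsto R\circ f$ is a homeomorphism of $\text{Diff}^{~1}_{\mu,R}(M)$ onto the space of $C^{1}$ $\mu$-preserving involutions of $M$, with inverse $g\mapsto R\circ g$ (which is reversible precisely because $g^{2}=\mathrm{id}$), so that a reversibility-respecting $C^{1}$-perturbation of $f$ is the same datum as a perturbation of $g$ inside that space. One then shows that, generically in $g$, every fixed point $p$ is nondegenerate, meaning $1\notin\mathrm{spec}(Dg_{p})$, hence isolated; by compactness of $M$ only finitely many survive, so the family of orbits isolated in the previous paragraph is finite. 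Openness of this condition is routine, since a nondegenerate fixed point persists and continuity together with compactness forbid the creation of new fixed points under a small perturbation. Density is the hard part: given $f$, one must perturb $g=R\circ f$, $C^{1}$-smally and supported near the non-isolated part of $Fix(g)$, through area-preserving involution-respecting local models, so as to break degeneracies --- a parametrized Thom--Sard transversality argument carried out within the constrained class, in the spirit of the perturbation toolbox of Section~\ref{Franks} --- and then transport the perturbation back through $f\mapsto R\circ f$. Intersecting the resulting open-dense sets over a countable exhaustion of $M$ produces the residual set $\mathscr{D}$, completing the proof.
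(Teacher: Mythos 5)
Your reduction in the first two paragraphs is correct and in fact sharper than the paper's argument (modulo a small slip: to witness non-freeness from $R(x)=f^{m}(x)$ you should take $u:=f^{m-1}(x)$ and $v:=x$, so that $f(u)=f^{m}(x)=R(x)=R(v)$). It cleanly shows that an orbit fails to be $(R,f)$-free exactly when it meets $Fix(R)\cup Fix(R\circ f)$, so that everything hinges on the single set $S_{1}=Fix(R\circ f)$. The paper proceeds differently: it invokes Thom transversality for the graphs of $f^{\pm i}$ and $R$ in $M\times M$ to conclude that, generically, each equation $f^{i}(x)=R(x)$ has only finitely many solutions, and then saturates those finitely many points by their orbits.

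The gap is in your last step, and it cannot be repaired: the nondegeneracy condition $1\notin\mathrm{spec}(Dg_{p})$ for $g=R\circ f$ is never satisfiable. Since $g$ is an involution, $(Dg_{p})^{2}=I$ at every fixed point, so $\mathrm{spec}(Dg_{p})\subset\{\pm 1\}$. Moreover the standing hypothesis that $Fix(R)$ is a one-dimensional submanifold forces $\det DR\equiv -1$: at a point of a one-dimensional fixed curve the involution's derivative has eigenvalues $1$ and $-1$ (if both were $1$ it would be the identity near that point), and $\det DR$ is a locally constant $\{\pm1\}$-valued function on the connected $M$. Hence, for orientation-preserving $f$, $\det Dg_{p}=-1$ and $\mathrm{spec}(Dg_{p})=\{1,-1\}$ at \emph{every} fixed point: all fixed points of $g$ are degenerate in your sense, $Fix(R\circ f)$ is a closed one-dimensional submanifold whenever it is non-empty, and non-emptiness is a $C^{0}$-open condition in the reversible class (an involution $C^{0}$-close to one with a fixed point has a fixed point: for a $g$-invariant metric, the midpoint of a short geodesic from $x$ to $g(x)$ is fixed by $g$). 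Already for the reversible linear Anosov maps of Section~\ref{Anosov maps} one computes that $\{x\colon f(x)=R(x)\}$ is a non-empty finite union of closed curves. So the family of orbits you isolate is, robustly, uncountable, and no perturbation scheme inside $\text{Diff}^{~1}_{\mu,R}(M)$ can shrink $Fix(R\circ f)$ to a discrete set. (Note that your reduction identifies the intersection of the graphs of $f$ and $R$ with $Fix(R\circ f)$, so the same structural obstruction confronts the transversality step in the paper's own proof; but as written your argument stalls exactly where the genuine work was announced.)
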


\begin{proof} Since $f$ and $R$ are smooth maps defined on $M$, by Thom transversality theorem \cite{Guillemin} there exists an open and dense set $\mathscr{D}_1\subset \text{Diff}^{~1}_{\mu, R}(M)$ such that, if $f \in \mathscr{D}_1$, the graphs of $f$ and $R$ are transverse submanifolds of $M\times M$, intersecting only at isolated points. Therefore, we may find a neighborhood of each intersection point where it is unique. By compactness of $M$, we conclude that generically the graphs of $f$ and $R$ intersect at a finite number of points (and this is an open property). Denote by $\mathcal{F}_1=\{x_j\}_{j=1}^{k_1}$ the set of points such that $f(x_j)=R(x_j)$.

Analogously, for $n\in \mathbb{N}$, let $\mathscr{D}_n\subset \text{Diff}^{~1}_{\mu, R}(M)$ be the open and dense set of diffeomorphisms $f \in \text{Diff}^{~1}_{\mu, R}(M)$ such that the graphs of $\{f^{-n},...,f^{-1},f,f^2,...,f^n\}$ and $R$ are transverse, and denote by $\mathcal{F}_n=\{x_j\}_{j=1}^{k_n}$ the finite set of $f$-orbits satisfying $f^i(x_j)=R(x_j)$ for some $j\in\{1,...,k_n\}$ and $i\in\{-n,...,-1,1,...,n\}$.

Finally, define
$$\mathscr{D}:=\bigcap_{n\,\in\,\mathbb{N}}\mathscr{D}_n$$
and the countable set of $(R,f)$-not-free orbits by
$$\mathcal{F}:=\bigcup_{n\,\in\, \mathbb{N},\,m\,\in\,\mathbb{Z}}f^m(\mathcal{F}_n).$$
We are left to show that, if $f\in \mathscr{D}$ and $x\in M\backslash\displaystyle [\mathcal{F}\cup Fix(R)]$, then the orbit of $x$ is a $(R,f)$-free set. Indeed, by construction, for such an $x$, the iterate $R(x)$ does not belong to the $f$-orbit of $x$; thus, by Lemma~\ref{free2}, this orbit is $(R,f)$-free.
\end{proof}

\begin{remark} The previous argument may be performed in $\text{Diff}^{~r}_{\mu, R}(M)$, for any $r\in \mathbb{N}$.
\end{remark}

From the previous result and the fact that $\dim\,(Fix(R))=1$, we easily get:

\begin{corollary}\label{D2}
Generically in $ \text{Diff}^{~1}_{\mu, R}(M)$, the set of $(R,f)$-free orbits has full measure.
\end{corollary}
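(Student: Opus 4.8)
The goal is Corollary~\ref{D2}: generically in $\text{Diff}^{~1}_{\mu,R}(M)$ the set of $(R,f)$-free orbits has full $\mu$-measure. The plan is to work inside the residual set $\mathscr{D}$ produced by Proposition~\ref{D} and simply account for the measure of the points that fail to lie on an $(R,f)$-free orbit. By the argument in the proof of Proposition~\ref{D}, for $f\in\mathscr{D}$ a point $x$ has a non-$(R,f)$-free orbit only if $x\in\mathcal{F}\cup Fix(R)$, where $\mathcal{F}=\bigcup_{n\in\mathbb{N},\,m\in\mathbb{Z}}f^m(\mathcal{F}_n)$ is a countable union of finite sets, hence countable.

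First I would observe that a countable set has $\mu$-measure zero, since $\mu$ is a (normalized area) measure with no atoms on the surface $M$; therefore $\mu(\mathcal{F})=0$. Next, $Fix(R)$ is by hypothesis a $1$-dimensional submanifold of the $2$-dimensional manifold $M$, so it is a $\mu$-null set: locally it is the graph of a smooth function (or more simply a smooth curve), which has zero area, and by compactness $Fix(R)$ is covered by finitely many such charts. Hence $\mu\big(\mathcal{F}\cup Fix(R)\big)=0$. Consequently, for every $f\in\mathscr{D}$, the complement $M\setminus[\mathcal{F}\cup Fix(R)]$ has full measure, and by the last paragraph of the proof of Proposition~\ref{D} every point of this complement has an $(R,f)$-free orbit. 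Since $\mathscr{D}$ is residual, this establishes the statement.

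There is essentially no obstacle here: the only thing to be a little careful about is the phrasing ``the set of $(R,f)$-free orbits has full measure,'' which I would interpret (consistently with how it is used later) as ``the union of all $(R,f)$-free orbits has full $\mu$-measure,'' i.e. $\mu$-almost every point lies on an $(R,f)$-free orbit. With that reading the proof is just the two measure-zero observations above combined with Proposition~\ref{D}. One could alternatively phrase it so as not even to invoke the explicit description of $\mathcal{F}$: on the residual set $\mathscr{D}$, the non-free orbits are contained in a countable union of $f$-images of finite sets together with $Fix(R)$, and each ingredient is $\mu$-null, so the conclusion follows from countable subadditivity of $\mu$.
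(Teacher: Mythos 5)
Your argument is exactly the one the paper intends: it derives the corollary from Proposition~\ref{D} by noting that the countable set $\mathcal{F}$ of non-free orbits is $\mu$-null and that $Fix(R)$, being a $1$-dimensional submanifold of the surface, is also $\mu$-null. The paper leaves this as an immediate consequence ("we easily get"), and your write-up simply makes those two measure-zero observations explicit.
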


\section{Linear examples on $M = \mathbb{T}^2$}\label{Anosov maps}

In this section we will address several questions concerning reversibility of linear Anosov diffeomorphisms, that is, difeomorphisms induced on $\mathbb{T}^2$ by the projection of a linear hyperbolic $L \in \SL(2,\mathbb{Z})$, with respect to linear involutions.

\subsection{Linear involutions}

We start characterizing the linear involutions $R:\mathbb{T}^2\rightarrow \mathbb{T}^2$ of the torus, induced by matrices $A$ in $\SL(2,\mathbb{Z})$. After differentiating the equality $R^2=Id_{\mathbb{T}^2}$ at any point of $\mathbb{T}^2$, we obtain $A^2=Id_{\mathbb{R}^2}$. Comparing the entries of the matrices in this equality, we conclude that:

\begin{proposition}\label{classification}
If $A\colon \mathbb{R}^2\rightarrow \mathbb{R}^2$ is a non trivial (that is, $A\not=\pm Id$) linear involution of $\SL(2,\mathbb{Z})$, then it belongs to the following list:\\
\begin{itemize}
\item $A=\begin{pmatrix}\pm1 & 0 \\ \gamma & \mp1 \end{pmatrix}$ \hspace{1cm} or its transpose, for some $\gamma\in\mathbb{Z}$.
\item $A=\begin{pmatrix} \alpha & \beta \\ \frac{1-\alpha^2}{\beta} & -\alpha \end{pmatrix}$ \hspace{0.8cm} for $\alpha,\beta\in\mathbb{Z}\backslash\{0\}$ such that $\beta$ divides $1-\alpha^2$.
\end{itemize}
\end{proposition}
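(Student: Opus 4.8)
The plan is to convert the relation $R^{2}=\mathrm{Id}_{\mathbb{T}^{2}}$ into a statement about the entries of $A$ and then enumerate the integer solutions. Write $A=\begin{pmatrix} a & b\\ c & d\end{pmatrix}$ with $a,b,c,d\in\mathbb{Z}$. Since $R$ is induced by $A$ on $\mathbb{T}^{2}=\mathbb{R}^{2}/\mathbb{Z}^{2}$ we have $DR_{x}=A$ at every point, so the chain rule applied to $R^{2}=\mathrm{Id}$ gives $A^{2}=\mathrm{Id}_{\mathbb{R}^{2}}$; taking determinants, $(\det A)^{2}=1$, hence $\det A\in\{1,-1\}$.

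I would then use the Cayley--Hamilton identity $A^{2}-(\operatorname{tr}A)\,A+(\det A)\,\mathrm{Id}=0$ (equivalently, just compare the four entries of $A^{2}=\mathrm{Id}$ directly), which rewrites $A^{2}=\mathrm{Id}$ as $(\operatorname{tr}A)\,A=(1+\det A)\,\mathrm{Id}$. If $\det A=1$ this reads $(\operatorname{tr}A)\,A=2\,\mathrm{Id}$: when $\operatorname{tr}A\neq 0$ the matrix $A$ is scalar, and $A^{2}=\mathrm{Id}$ then forces $A=\pm\mathrm{Id}$, while $\operatorname{tr}A=0$ gives the impossible $0=2\,\mathrm{Id}$; so $\det A=1$ yields only the trivial involutions. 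Hence a nontrivial involution satisfies $\det A=-1$, and then $(\operatorname{tr}A)\,A=0$ with $A\neq 0$ forces $\operatorname{tr}A=0$, i.e. $d=-a$. Conversely, any integer matrix with $\operatorname{tr}A=0$ and $\det A=-1$ satisfies $A^{2}=\mathrm{Id}$ by Cayley--Hamilton. Thus the nontrivial involutions are exactly the integer matrices $\begin{pmatrix} a & b\\ c & -a\end{pmatrix}$ with $\det A=-a^{2}-bc=-1$, that is, with $bc=1-a^{2}$.

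It then remains to enumerate such matrices, splitting according to the vanishing of the off-diagonal entries. If $b=0$, then $bc=0=1-a^{2}$, so $a=\pm1$ and $c\in\mathbb{Z}$ is arbitrary; this is the first matrix of the list with $\gamma=c$. Symmetrically, if $c=0$ and $b\neq 0$ one obtains its transpose. If $b\neq 0$ and $c\neq 0$, then $c=\tfrac{1-a^{2}}{b}$, which lies in $\mathbb{Z}$ precisely when $b\mid 1-a^{2}$; writing $\alpha=a$ and $\beta=b$ (both nonzero) this is exactly the second family. (The leftover value $\alpha=0$ gives only $bc=1$, i.e. $\begin{pmatrix} 0 & \pm1\\ \pm1 & 0\end{pmatrix}$, of the same shape, and when $\alpha=\pm1$ the second family degenerates to $\gamma=0$, already covered by the first.) This exhausts all cases.

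There is no substantial obstacle: once $R^{2}=\mathrm{Id}$ has been rephrased as $A^{2}=\mathrm{Id}$, the statement is elementary linear algebra over $\mathbb{Z}$. The only point that needs care is the bookkeeping that makes the list exhaustive — in particular not omitting the degenerate matrices with $b=0$ or $c=0$, which are not captured by the formula $\gamma=(1-\alpha^{2})/\beta$ — together with the clean observation that every nontrivial integer involution in dimension two has zero trace and determinant $-1$.
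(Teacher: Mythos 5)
Your proof is correct and follows essentially the same route as the paper: both reduce $R^{2}=\mathrm{Id}$ to $A^{2}=\mathrm{Id}$ and then enumerate by whether the off-diagonal entries vanish. The only difference is cosmetic --- you extract the constraints $\operatorname{tr}A=0$ and $\det A=-1$ via Cayley--Hamilton instead of comparing the four entries of $A^{2}$ directly, which additionally gives the converse for free and correctly flags the $\alpha=0$ matrices $\begin{pmatrix}0 & \pm1\\ \pm1 & 0\end{pmatrix}$ that the stated list, with its requirement $\alpha\neq 0$, technically omits.
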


\begin{proof}
Let $A$ be a matrix $\begin{pmatrix}\alpha & \beta \\ \gamma & \delta \end{pmatrix} \in \SL(2,\mathbb{Z})$ such that $A^2=Id$. This means that
$$\alpha \delta - \beta \gamma = \pm 1$$
and
\begin{equation*}
\left\{
\begin{array}{l}
\alpha^2+\beta\gamma=1\\
\beta(\alpha + \delta) = 0\\
\gamma(\alpha+\delta)=0\\
\gamma\beta+\delta^2=1
\end{array}
\right.
\end{equation*}
which implies that
\begin{equation*}
\left\{
\begin{array}{l}
\beta=0 \,\vee\, \alpha=-\delta\\
\gamma=0 \,\vee\, \alpha=-\delta.
\end{array}
\right.
\end{equation*}

\medskip

\noindent \textbf{1st case:} $\beta=0$

\bigskip
One must have $\alpha=\pm 1$ and $\delta=\pm 1$. If $\alpha=\delta=1$ or $\alpha=\delta=-1$, we conclude that $\gamma=0$ and so $A=\pm Id$. Therefore, $-\alpha=\delta=1$ or $\alpha=-\delta=1$, and there are no restrictions on the value of $\gamma$. Hence $A = \begin{pmatrix}\pm1 & 0 \\ \gamma & \mp1 \end{pmatrix}$, for $\gamma\in\mathbb{Z}.$

\bigskip

\noindent \textbf{2nd case:} $\gamma=0$

\bigskip

Again $\alpha=\pm 1$ and $\delta=\pm 1$, and so either $-\alpha=\delta=1$ or $\alpha=-\delta=1$. Therefore $A = \begin{pmatrix}\pm 1 & \beta \\ 0 & \mp1 \end{pmatrix}$, for $\beta\in\mathbb{Z}.$

\bigskip

\noindent \textbf{3rd case:} $\beta \neq 0$ and $\gamma \neq 0$

\bigskip
We must have $\alpha=-\delta$ and so $\alpha^2+\beta\gamma=1$ is equivalent to $\gamma=\frac{1-\alpha^2}{\beta}$. Moreover, $\beta$ must divide $1-\alpha^2$. Thus
$$A = \begin{pmatrix}\alpha & \beta \\ \frac{1-\alpha^2}{\beta} & -\alpha \end{pmatrix}.$$
\end{proof}

\bigskip

From this description and by solving the equation $A(x,y)=(x,y)$ in $\mathbb{R}^2$, we deduce that:

\begin{corollary}
The fixed point set of a linear non-trivial involution $R$ of the torus is a closed smooth curve.
\end{corollary}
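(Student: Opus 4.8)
The plan is to use that $R$ is a group automorphism of $\mathbb{T}^2=\mathbb{R}^2/\mathbb{Z}^2$, so that $Fix(R)$ is a \emph{closed subgroup}, and then to identify it. Write $\pi\colon\mathbb{R}^2\to\mathbb{T}^2$ for the canonical projection and let $A\in\SL(2,\mathbb{Z})$ be the non-trivial involution inducing $R$. Since $A^2=Id$ and $A\neq\pm Id$, its minimal polynomial is $(t-1)(t+1)$; equivalently, reading off the list in Proposition~\ref{classification}, $\operatorname{tr} A=0$ and $\det A=-1$, so $A$ has eigenvalues $1$ and $-1$. Hence $B:=A-Id$ is a \emph{non-zero} integer matrix with $\operatorname{tr} B=-2$ and $\det B=0$, so it has rank $1$, and $\ker B=E_{1}$ is the eigenline of $A$ for the eigenvalue $1$.

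First I would record that $Fix(R)$ is precisely the kernel of the endomorphism of $\mathbb{T}^2$ induced by $B$, because $R(\pi(v))=\pi(v)$ if and only if $(A-Id)v\in\mathbb{Z}^2$; in particular $Fix(R)$ is a closed subgroup of $\mathbb{T}^2$. Next, since $E_1=\ker B$ is the kernel of a non-zero integer matrix, it is a rational line, spanned by a primitive integer vector $z\in\mathbb{Z}^2$; consequently its image $C:=\pi(E_1)=\pi(\mathbb{R}z)$ is a closed one-dimensional subtorus of $\mathbb{T}^2$ (a ``linear circle''), and clearly $C\subseteq Fix(R)$. On the other hand $Fix(R)\neq\mathbb{T}^2$, since $A\neq Id$ forces $R\neq Id_{\mathbb{T}^2}$ (a non-zero linear map cannot carry $\mathbb{R}^2$ into $\mathbb{Z}^2$).

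To conclude, I would pass to the quotient homomorphism $q\colon\mathbb{T}^2\to\mathbb{T}^2/C$, whose target is a circle $\mathbb{T}^1$. As $\ker q=C\subseteq Fix(R)$, we have $Fix(R)=q^{-1}\big(q(Fix(R))\big)$, and $q(Fix(R))$ is a compact (hence closed) proper subgroup of $\mathbb{T}^1\cong\mathbb{R}/\mathbb{Z}$, therefore finite. Thus $Fix(R)=q^{-1}(F)$ for some finite $F\subset\mathbb{T}^2/C$, i.e. a finite disjoint union of cosets of $C$, each of which is a circle parallel to $C$. Hence $Fix(R)$ is a compact embedded one-dimensional submanifold of $\mathbb{T}^2$ — a closed smooth curve — though, as the involution $R(x,y)=(x,-y)$ (whose fixed set is $\{y=0\}\cup\{y=\tfrac12\}$) already shows, it need not be connected.

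The computations are elementary linear algebra; the only step that requires a little care is the last one, namely excluding that $Fix(R)$ be a dense, non-closed, or two-dimensional set. This is handled exactly by the two observations above: $Fix(R)$ is a \emph{closed} subgroup, and the closed subgroups of the circle are either the whole circle or finite. Alternatively, the statement can be obtained by brute force, solving $(A-Id)(x,y)\in\mathbb{Z}^2$ case by case for the matrices listed in Proposition~\ref{classification}, but the group-theoretic route above makes the uniformity of the answer transparent.
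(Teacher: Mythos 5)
Your proof is correct, and it takes a genuinely different route from the paper's. The paper simply solves $A(x,y)=(x,y)$ in $\mathbb{R}^2$ case by case for the three families of matrices from Proposition~\ref{classification} and tabulates the resulting eigenlines $E_1=\ker(A-Id)$. Your argument is uniform and group-theoretic: you observe that $Fix(R)$ is the kernel of the toral endomorphism induced by $B=A-Id$, i.e.\ the set of $\pi(v)$ with $(A-Id)v\in\mathbb{Z}^2$ (not merely $=0$), note that $B$ has rank one with rational kernel because $A\neq\pm Id$ forces the minimal polynomial $(t-1)(t+1)$, and then use the classification of closed subgroups of the circle to conclude that $Fix(R)$ is a finite union of cosets of the subtorus $\pi(E_1)$. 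What your approach buys is precision on a point the paper's table glosses over: the fixed point set on the torus can strictly contain $\pi(E_1)$ and need not be connected --- exactly the phenomenon the paper itself exhibits later in Example~\ref{noFranks}, where $R(x,y)=(x,-y)$ has fixed set $\{y=0\}\cup\{y=\tfrac12\}$. What the paper's computation buys is the explicit equations of the fixed lines, which it reuses in the subsequent examples. Both establish the corollary as intended, namely that $Fix(R)$ is a closed one-dimensional submanifold.
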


\begin{proof} $R$ is induced by a matrix $A \in \SL(2,\mathbb{Z})$ of the previous list and

\medskip

\begin{center}
\small{\begin{tabular}{|c|c|}
	\hline
$A$ & Fixed point subspace of $A$ \\
	\hline\hline
$\begin{pmatrix} 1 & 0 \\ \gamma & - 1 \end{pmatrix}$ & $y=\frac{\gamma}{2}x$  \\
	\hline
$\begin{pmatrix} -1 & 0 \\ \gamma & 1 \end{pmatrix}$ & $x=0$  \\
	\hline
$\begin{pmatrix} \alpha & \beta \\ \frac{1-\alpha^2}{\beta} & -\alpha \end{pmatrix}, \,\,\beta \neq 0$ & $y=\frac{1-\alpha}{\beta}x $  \\
\hline
\end{tabular}}
\end{center}
\end{proof}

\subsection{Linear reversibility}

Let $R$ be a linear involution of the torus, induced by a matrix $A \in \SL(2,\mathbb{Z})$. Is there an $R$-reversible linear (area-preserving) Anosov diffeomorphism $f$? The answer is obvious (and no) if $R=\pm Id$. For the other possible involutions, we will look for a linear Anosov $f$ whose derivative at any point of $f$ is a fixed linear map with matrix $L\in \SL(2,\mathbb{Z})$, and, to simplify our task, we also assume that $det(L)=1$.

If we lift the equality $R\circ f=f^{-1}\circ R$, by differentiating it at any point of $\mathbb{T}^2$, we obtain $A\circ L=L^{-1}\circ A.$ Analyzing the entries of these matrices we conclude that:

\begin{proposition}\label{FindL}
Let $\mathcal{L}$ be the set of linear Anosov diffeomorphisms on the torus. If $R$ is a non-trivial (that is, $R \neq \pm Id$) linear involution, then
$$\text{Diff}^{~1}_{\mu, R}(\mathbb{T}^2) \cap \mathcal{L} \neq \emptyset.$$
\end{proposition}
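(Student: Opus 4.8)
The plan is to turn the statement into a question about matrices and then write down the required one explicitly. As the excerpt already notes, if $f$ is the linear Anosov diffeomorphism induced by a hyperbolic $L\in\SL(2,\mathbb{Z})$ and $R$ is induced by the matrix $A$, then $f$ is $R$-reversible exactly when $A\circ L=L^{-1}\circ A$; since $A^{2}=Id$ this is equivalent to $ALA=L^{-1}$. So the whole task reduces to producing a \emph{hyperbolic} $L\in\SL(2,\mathbb{Z})$ with $ALA=L^{-1}$.

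First I would record the structural observation that dissolves the constraint: $ALA=L^{-1}$ holds if and only if $AL$ is an involution, because $(AL)^{2}=ALAL=(ALA)L$ equals $Id$ precisely in that case. This suggests the ansatz $L:=AC$ where $C$ is a nontrivial involution of $\GL(2,\mathbb{Z})$: then $L^{-1}=(AC)^{-1}=C^{-1}A^{-1}=CA$ and $ALA=A(AC)A=CA=L^{-1}$ automatically, while $\det L=\det A\,\det C$. Now, because $A^{2}=Id$ and $A\neq\pm Id$, the eigenvalues of $A$ are $1$ and $-1$, so $\operatorname{tr}A=0$ and $\det A=-1$; equivalently, by Proposition~\ref{classification}, $A=\left(\begin{smallmatrix}a&b\\c&-a\end{smallmatrix}\right)$ with $a^{2}+bc=1$. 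Choosing $C$ of the same trace-$0$, determinant-$(-1)$ shape then forces $\det L=(-1)(-1)=1$, as required for an orientation-preserving area-preserving Anosov map.

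It remains to force $L=AC$ to be hyperbolic, and here I would use a one-parameter family of involutions instead of a single one. For $n\in\mathbb{Z}$ set
$$C_{n}:=\begin{pmatrix}n&1\\1-n^{2}&-n\end{pmatrix},$$
which has trace $0$ and determinant $-1$ (hence $C_{n}^{2}=Id$) and is nontrivial, and put $L_{n}:=AC_{n}\in\SL(2,\mathbb{Z})$. A one-line computation gives $\operatorname{tr}(L_{n})=-bn^{2}+2an+(b+c)=:p(n)$. The integer polynomial $p$ cannot be constant, since that would force $b=0$ and $a=0$, contradicting $a^{2}+bc=1$; being nonconstant it is unbounded on $\mathbb{Z}$, so one may pick $n$ with $|\operatorname{tr}(L_{n})|=|p(n)|\ge 3>2$. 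Then $L_{n}$ is hyperbolic, hence induces a linear, area-preserving, Anosov diffeomorphism $f\in\mathcal{L}$ which by construction satisfies $AL_{n}A=L_{n}^{-1}$, i.e.\ $f\in\text{Diff}^{~1}_{\mu,R}(\mathbb{T}^{2})$. This exhibits the required element of $\text{Diff}^{~1}_{\mu, R}(\mathbb{T}^{2})\cap\mathcal{L}$.

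The point I would expect to be the main obstacle at first sight is meeting, simultaneously and with the rigid relation $ALA=L^{-1}$ in force, all three demands on $L$: integer entries, determinant $1$, and hyperbolicity. The equivalence ``$ALA=L^{-1}\iff AL$ is an involution'' makes the first two automatic for $L=AC$ with $C$ a nontrivial determinant-$(-1)$ involution, and the parametrized family $C_{n}$ then disposes of the third in one stroke via the elementary unboundedness of $n\mapsto\operatorname{tr}(AC_{n})$. In the write-up I would also check the degenerate branches of Proposition~\ref{classification}—the case $b=0$, where $a=\pm1$ and $p$ has degree one, and the transposed normal forms, which likewise have trace $0$ and determinant $-1$—to confirm that every nontrivial involution is handled by exactly the same argument.
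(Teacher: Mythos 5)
Your proof is correct, and it takes a genuinely different route from the paper's. The paper proves Proposition~\ref{FindL} by a case-by-case analysis over the normal forms of $A$ listed in Proposition~\ref{classification}: for each type it writes out the linear constraint $A\circ L=L^{-1}\circ A$ entrywise and exhibits an explicit hyperbolic solution $L$ (e.g.\ $a=d=3$, $b=4$, $c=2$ when $\gamma=0$, or $L=\bigl(\begin{smallmatrix}\alpha&\beta\\ \frac{\alpha^2-1}{\beta}&\alpha\end{smallmatrix}\bigr)$ in the third case). You instead dissolve the constraint structurally: since every non-trivial involution of $\GL(2,\mathbb{Z})$ has eigenvalues $1$ and $-1$, hence trace $0$ and determinant $-1$, the ansatz $L=AC$ with $C$ a second trace-$0$, determinant-$(-1)$ involution makes $ALA=L^{-1}$ and $\det L=1$ automatic, and the one-parameter family $C_n$ turns hyperbolicity into the unboundedness of the integer polynomial $n\mapsto\operatorname{tr}(AC_n)=-bn^2+2an+(b+c)$, which cannot be constant because $a=b=0$ would contradict $a^2+bc=1$. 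All the individual checks go through (Cayley--Hamilton gives $C_n^2=Id$; $|\operatorname{tr}|>2$ with determinant $1$ gives hyperbolicity). What your argument buys is uniformity and brevity — one computation handles every normal form, it makes visible the classical fact that reversible maps are products of two involutions, and it produces infinitely many reversible Anosov maps for each $R$ at once. What the paper's version buys is explicitness: the concrete matrices it constructs feed directly into the subsequent tables and the discussion of the generalized Pell equation, which your existence argument does not supply.
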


\begin{proof}
Going through the possible matrices $A$ given by Proposition~\ref{classification}, we will determine, for each $R$, an orientation-preserving, linear, $R$-reversible Anosov diffeomorphism $f$, induced by a matrix $L(x,y)=(ax+by,cx+dy) \in \SL(2,\mathbb{Z})$. The entries of $L$ must satisfy the conditions:
\begin{itemize}
\item[(IL)] (Integer lattice invariance) $\,\,a,b,c,d\in\mathbb{Z}$ and $ad-bc=\pm1$.
\item[(H1)] (Hyperbolicity) $\,\,(a+d)^2-4>0$, if $ad-bc=1$.
\item[(H2)] (Hyperbolicity) $\,\,(a+d)^2+4$ is not a perfect square, if $ad-bc=-1$.
\end{itemize}

\medskip

\noindent Notice that (IL) ensures conservativeness and that the two properties (IL) and (H1 or H2) together imply that $b$ and $c$ do not vanish: otherwise $ad=\pm 1$ and this contradicts hyperbolicity. Moreover, if $ad-bc=1$, then $D=(a+d)^2 - 4$ is not a perfect square. This explains why a linear Anosov diffeomorphism is not $\pm Id$-reversible.\\

\noindent \textbf{1.} $A=\begin{pmatrix} 1 & 0 \\ \gamma & - 1 \end{pmatrix}$

\bigskip
	
The equality $A\circ L=L^{-1}\circ A$, with $det(L)=1$, is equivalent to $b\gamma=d-a$. If $\gamma=0$, we may take $L=\begin{pmatrix} a & b \\ c & a \end{pmatrix}$ with integer entries such that $a^2-bc=1$ and $4a^2>4$ (so $b \neq 0$ and $c \neq 0$). For instance, $a=d=3$, $b=4$ and $c=2$. If $\gamma \neq 0$, it must divide $d-a$ and $L$ has to be $\begin{pmatrix}a & \frac{d-a}{\gamma} \\ c & d \end{pmatrix}$, with integer entries such that $ad-bc=1$, $(a+d)^2>4$, $d-a \neq 0$ and $c\neq 0$. For example, $a=\gamma \in \mathbb{Z}\backslash \{0\}$, $b=1$, $c=2\gamma^2-1$ and $d=2\gamma$.

As the reversibility condition $A\circ L=L^{-1}\circ A$, with $det(L)=1$, is equivalent to $A^T\circ L^T=(L^T)^{-1}\circ A^T$, with $det(L^T)=1$, the case of the transpose matrix is equally solved.\\

\noindent \textbf{2.} $A=\begin{pmatrix}-1 & 0 \\ \gamma & 1 \end{pmatrix}$

\bigskip

As in the previous case, the equality $A\circ L=L^{-1}\circ A$, with $det(L)=1$, is equivalent to $b\gamma=a-d$. So, if $\gamma=0$, we may take $L=\begin{pmatrix}3 & 4 \\ 2 & 3 \end{pmatrix}$. If $\gamma \neq 0$, we may choose, for instance, $a=\gamma \in \mathbb{Z}\backslash \{0\}$, $b=-1$, $c=1-2\gamma^2$ and $d=2\gamma$. Again, for $A^T= \begin{pmatrix}-1 & \beta \\ 0 & 1 \end{pmatrix}$, we may just pick the Anosov diffeomorphism induced by $L^T$. \\

\bigskip

\noindent \textbf{3.} $A=\begin{pmatrix} \alpha & \beta \\ \frac{1-\alpha^2}{\beta} & -\alpha \end{pmatrix}$, with $\alpha, \beta, 1-\alpha^2 \neq 0$ and $\beta$ a divisor of $1-\alpha^2$

\bigskip

The equality
$$\begin{pmatrix} \alpha & \beta \\ \frac{1-\alpha^2}{\beta} &  -\alpha \end{pmatrix} \begin{pmatrix}a & b \\ c & d \end{pmatrix}=\begin{pmatrix}d & -b \\ -c & a \end{pmatrix}\begin{pmatrix}\alpha & \beta \\ \frac{1-\alpha^2}{\beta} &  -\alpha \end{pmatrix}$$
is equivalent to the equation
$$\alpha a +\beta c= \alpha d - \frac{b}{\beta}(1-\alpha^2)$$
that is,
\begin{equation}
\label{equation3}
\alpha\beta a +(1-\alpha^2) b +\beta^2 c -\alpha \beta d=0.
\end{equation}
Now to simplify our task, let us try to find a matrix satisfying $a=d$. Under this assumption, equation (\ref{equation3}) becomes
$$(1-\alpha^2) b +\beta^2 c=0.$$
As $c$ must comply with the equality $a^2-bc=1$ and $b$ cannot be zero, we must have $c=\frac{a^2-1}{b}$. In addition to this, we know that $\beta$ divides $1-\alpha^2$ and that $\alpha^2\neq 1$, so $4\alpha^2 -4 > 0$. Therefore, a convenient choice is $a=d=\alpha$, $b=\pm\beta$ and $c=\frac{\alpha^2-1}{\pm\beta}$. This way,
$$L=\begin{pmatrix} \alpha & \beta \\ \frac{\alpha^2-1}{\beta} & \alpha \end{pmatrix} \,\,\,\,\text{ or }\,\,\,\, L=\begin{pmatrix} \alpha & -\beta \\ \frac{1-\alpha^2}{\beta} & \alpha \end{pmatrix}.$$
\end{proof}

\subsection{Linear reversible Anosov diffeomorphisms}

We will now discuss whether, given a linear Anosov diffeomorphism $f$, there are non-trivial linear involutions $R$ such that $f$ is $R$-reversible. In spite of the fact that, on the torus $\mathbb{T}^2$, each Anosov diffeomorphism is conjugate to a hyperbolic toral automorphism \cite{Man}, the conclusions we will draw cannot be extended to all the Anosov diffeomorphisms because $R$-reversibility, for a fixed $R$, is not preserved by conjugacy. Notice, however, that a diffeomorphism conjugate to a $R$-reversible linear Anosov diffeomorphism is reversible as well, although with respect to another involution which is conjugate to $R$ but, in general, not a diffeomorphism.

\subsubsection{Orientation-preserving case}

Let $f$ be a linear Anosov diffeomorphism, induced by a matrix $L=\begin{pmatrix} a & b \\ c & d \end{pmatrix}\in \SL(2,\mathbb{Z})$, and assume that $ad-bc=1$. Take a linear involution $R$, given by the projection on the torus of a matrix $A$ as described in Proposition~\ref{classification}.\\

\noindent \textbf{Case 1}: $A=\begin{pmatrix} 1 & 0 \\ \gamma & -1 \end{pmatrix}$ or $A=\begin{pmatrix} -1 & 0 \\ \gamma & 1 \end{pmatrix}$.

\bigskip

The reversibility equality is equivalent to $b\gamma = d-a$ or $b\gamma = a-d$. So there is such an involution $A$ if and only if $b$ divides $d-a$, in which case only one valid $\gamma$ exists (namely, $\gamma=\frac{d-a}{b}$ or $\gamma=\frac{a-d}{b}$, respectively).\\

\noindent \textbf{Case 2}: $A=\begin{pmatrix} 1 & \gamma \\ 0 & -1 \end{pmatrix}$ or $A=\begin{pmatrix} -1 & \gamma \\ 0 & 1 \end{pmatrix}$.

\bigskip

Dually, the reversibility condition is equivalent to $c\gamma = d-a$ or $c\gamma = a-d$. So there is such an involution $A$ if and only if $c$ divides $d-a$, and then we get a unique value for $\gamma$.\\

\noindent \textbf{Case 3}: $A=\begin{pmatrix} \alpha & \beta \\ \frac{1-\alpha^2}{\beta} & -\alpha \end{pmatrix}$, where $\alpha, \beta, 1-\alpha^2 \neq 0$ and $\beta$ divides $1-\alpha^2$.

\bigskip

The pairs $(\alpha,\beta)\in\mathbb{Z}^2$ for which $f$ is $R$-reversible are the integer solutions of the equation, in the variables $\alpha$ and $\beta$, given by
$$\alpha a + \beta c = \alpha d - \frac{b}{\beta}(1-\alpha^2)$$
that is,
$$b\alpha^2 + \alpha \beta (d-a) - \beta^2 c = b.$$
This quadratic form defines a non-degenerate (because $b\neq 0$) conic whose kind depends uniquely on the sign of
$$\Delta=(d-a)^2 + 4bc = (a+d)^2 - 4$$
which we know to be always positive. So the conic is a hyperbola. After the change of variables
$$x=2b\alpha+(d-a)\beta \,\,\,\,\,\, \text{ and } \,\,\,\,\,\,y=\beta$$
the equation of the conic becomes
$$x^2-Dy^2=N$$
where $D=\Delta=(a+d)^2-4$ and $N=4b^2$. Thus the problem of finding the intersections of the conic with the integer lattice is linked to the solutions of this generalized Pell equation (and we need solutions with $y \neq 0$). According to \cite{C,M,Mo}, this Pell equation has zero integer solutions or infinitely many, and there are several efficient algorithms\footnote{See, for instance, http://www.alpertron.com.ar/QUAD.HTM} to determine which one holds in each particular case. However, if they exist, the solutions have also to fulfill the other requirements, namely $\alpha, \beta \neq 0$ and $\beta$ divides $1-\alpha^2$.

\begin{example}$\,$
\bigskip
\begin{center}
\small{\begin{tabular}{|c|c|c|c|c|c|}
	\hline
 \textbf{\emph{Anosov}} & & & \textbf{\emph{Involutions}} & &  \\
\hline\hline
& $\begin{pmatrix} 1 & 0 \\ \gamma & -1 \end{pmatrix}$ & $\begin{pmatrix} 1 & \gamma \\ 0 & -1 \end{pmatrix}$ & $\begin{pmatrix} -1 & 0 \\ \gamma & 1 \end{pmatrix}$ &  $\begin{pmatrix} -1 & \gamma \\ 0 & 1 \end{pmatrix}$ & $\begin{pmatrix} \alpha & \beta \\ \frac{1-\alpha^2}{\beta} & -\alpha \end{pmatrix}$ \\
\hline\hline
$\begin{pmatrix} 2 & 1 \\ 3 & 2 \end{pmatrix}$ & $\gamma=0$ & $\gamma=0$ & $\gamma=0$ & $\gamma=0$ & Example: $\,\begin{pmatrix} 2 & 1 \\ -3 & -2 \end{pmatrix}$ \\
\hline
$\begin{pmatrix} 2 & 1 \\ 1 & 1 \end{pmatrix}$ & $\gamma=-1$ & $\gamma=-1$ & $\gamma=1$ & $\gamma=1$ & Example: $\,\begin{pmatrix} 5 & 3 \\ -8 & -5 \end{pmatrix}$ \\
\hline
$ \begin{pmatrix} 4 & 9 \\ 7 & 16 \end{pmatrix}$ & $-$ & $-$ & $-$ & $-$ & $-$ \\
	\hline
\end{tabular}}
\end{center}
\bigskip
\end{example}

For $L=\begin{pmatrix} 2 & 1 \\ 3 & 2 \end{pmatrix}$, the generalized Pell equation is $x^2-12y^2=4$ and there are infini\-tely many matrices $A$ of type $3$ which correspond to linear involutions $R$ such that $f$ is $R$-reversible. Similarly, for $L=\begin{pmatrix} 2 & 1 \\ 1 & 1 \end{pmatrix}$, the generalized Pell equation is $x^2-5y^2=4$ and there are infinitely many solutions of type $3$. The third example in this table, whose generalized Pell equation is $x^2-396y^2=324$ and has infinitely many solutions but $L$ has no linear involutions, has been previously mentioned in \cite{BR}.\\

Notice that, if $R$ is an involution such that $R \circ f= f^{-1} \circ R$, then, for each $n\in \mathbb{Z}$, the diffeomorphism $R \circ f^n$ is also an involution, since
$$(R \circ f^n)^2 = (R \circ f^{n}) \circ (f^{-n}\circ R) = Id$$
and $f$ is $(R \circ f^n)$-reversible, because
$$(R \circ f^n)\circ f= (R \circ f) \circ f^n=(f^{-1}\circ R) \circ f^n = f^{-1}\circ (R \circ f^n).$$
Therefore, once such an involution $R$ is found for an Anosov diffeomorphism $f$, then we have infinitely many involutions with respect to which $f$ is reversible: no non-trivial power of an Anosov diffeomorphism is equal to the Identity, so, for any $k\neq m \in \mathbb{Z}$, we have $R\circ f^k \neq R\circ f^m$.

\subsubsection{Orientation-reversing case}

Consider now a linear Anosov diffeomorphism $f$, induced by a matrix $L=\begin{pmatrix}a & b \\ c & d \end{pmatrix}\in \SL(2,\mathbb{Z})$ such that $ad-bc=-1$. The previous analysis extends to this setting with similar conclusions. Indeed:\\

\noindent \textbf{Cases 1,\,2}: There is no valid $A$, since reversibility demands that $b$, $c$ or $a+d$ is zero, a value forbidden in this context.

\bigskip

\noindent \textbf{Case 3}: $A=\begin{pmatrix} \alpha & \beta \\ \frac{1-\alpha^2}{\beta} & -\alpha \end{pmatrix}$, where $\alpha, \beta, 1-\alpha^2 \neq 0$ and $\beta$ divides $1-\alpha^2$.

\bigskip

The pairs $(\alpha,\beta)\in\mathbb{Z}^2$ for which $f$ is $R$-reversible form the set of integer solutions of the equations, in the variables $\alpha$ and $\beta$, given by
$$\left\{ \begin{array}{c}
\alpha b + \beta d = 0 \\
\alpha c - \frac{a}{\beta}(1-\alpha^2)=0 \\
\alpha a + \beta c = -\alpha d + \frac{b}{\beta}(1-\alpha^2).
\end{array}
\right.$$
The third equality describes a (possibly degenerate) conic
$$b\alpha^2 + \alpha \beta (a+d)+ \beta^2 c = b$$
whose sort is determined by the sign of
$$\Delta=(a+d)^2 -4bc = (a-d)^2 - 4.$$
For instance, $\Delta <0$ for $L=\begin{pmatrix} 2 & 3 \\ 1 & 1 \end{pmatrix}$; $\Delta = 0$ when $L=\begin{pmatrix}3 & 4 \\ 1 & 1 \end{pmatrix}$; and $\Delta >0$ if $L=\begin{pmatrix} 4 & 5 \\ 1 & 1 \end{pmatrix}$. Once again, the problem of finding the points of this conic in the integer lattice is linked to the generalized Pell equation $x^2-Dy^2=N$, where $x=2b\alpha+(a+d)\beta$, $D=\Delta=(a-d)^2-4$, $N=4b^2$ and $y=\beta$, and also to the existence of solutions of the Pell equation satisfying the other two constraints, namely $\alpha b + \beta d = 0$ and $\alpha c - \frac{a}{\beta}(1-\alpha^2)=0$. \\

\begin{example} $\,$
\bigskip
\begin{center}
\small{\begin{tabular}{|c|c|c|c|c|c|}
	\hline
 \textbf{\emph{Anosov}} & \textbf{$\Delta$}  &\emph{\textbf{Generalized Pell Equation}} & \textbf{\emph{Number of solutions}} & \textbf{\emph{Conic}} &\textbf{\emph{Involutions}}  \\
\hline\hline
$\begin{pmatrix} 2 & 3 \\ 1 & 1 \end{pmatrix}$ & $-3$ & $x^2+3y^2=36$ & $6$ & Ellipse & $-$  \\
\hline
$\begin{pmatrix} 3 & 4 \\ 1 & 1 \end{pmatrix}$ & $0$ & $x^2=64$ & $\infty$ & Two vertical lines & $-$ \\
\hline
$\begin{pmatrix} 4 & 5 \\ 1 & 1 \end{pmatrix}$ & $5$ & $x^2-5y^2=100$ & $\infty$ & Hyperbola & $-$ \\
	\hline
\end{tabular}}
\end{center}
\bigskip
\end{example}

\begin{proposition}
If $f$ is an orientation-reversing linear Anosov diffeomorphism, there are no linear involutions $R$ such that $f$ is $R$-reversible.
\end{proposition}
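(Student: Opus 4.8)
The plan is to reduce to Case 3 of the preceding analysis, since Cases 1 and 2 have already been ruled out for orientation-reversing $L$ (they force $b=0$, $c=0$, or $a+d=0$, all incompatible with hyperbolicity when $ad-bc=-1$). So it suffices to show that the system
$$\alpha b + \beta d = 0, \qquad \alpha c - \frac{a}{\beta}(1-\alpha^2)=0, \qquad \alpha a + \beta c = -\alpha d + \frac{b}{\beta}(1-\alpha^2)$$
has no integer solution with $\alpha,\beta\neq 0$ and $\beta\mid 1-\alpha^2$. The first thing I would do is exploit the first two equations directly rather than passing to the Pell equation of the third: from $\alpha b = -\beta d$ and $\alpha c\,\beta = a(1-\alpha^2)=a(1-\alpha^2)$ one gets two divisibility/coupling relations between $\alpha,\beta$ and the entries of $L$.

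The key step is an elimination. Multiply the first equation by $c$ and use $\alpha c \beta = a(1-\alpha^2)$: from $\alpha b c + \beta c d = 0$ we substitute $\beta c = a(1-\alpha^2)/\alpha$ (valid since $\alpha\neq 0$) to obtain $\alpha b c + d\,a(1-\alpha^2)/\alpha = 0$, i.e. $\alpha^2 bc + ad(1-\alpha^2)=0$, hence $\alpha^2(bc-ad) = -ad$, that is $\alpha^2 = \dfrac{ad}{ad-bc} = \dfrac{ad}{-1} = -ad$ (using $ad-bc=-1$). So $\alpha^2 = -ad$. Symmetrically, multiplying the first equation by $b$ and eliminating gives a relation forcing $\beta$ as well; but the relation $\alpha^2 = -ad$ is already the crux. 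Since $\alpha^2 \geq 1$ we need $ad \leq -1$, so $ad<0$; and $bc = ad+1 = 1-\alpha^2 \leq 0$, consistent, with $bc = 0$ only if $\alpha^2=1$. The remaining work is to feed $\alpha^2 = -ad$ back into the third equation together with $\beta b = -\alpha d$ (rewriting the first equation as $\alpha b = -\beta d$, so $\beta = -\alpha b/d$ when $d\neq 0$, etc.) and check that hyperbolicity — namely that $(a-d)^2-4$ is not a perfect square, equivalently $(a+d)^2+4$ is not a perfect square, from (H2) — is contradicted, or alternatively that $\det A = \alpha^2 + \beta\cdot\frac{1-\alpha^2}{\beta} = \alpha^2 + (1-\alpha^2) = 1$ is automatic but the $\SL(2,\mathbb{Z})$-involution condition combined with $\alpha^2=-ad$ and the trace constraint pins $L$ into a non-hyperbolic corner.

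I expect the main obstacle to be the bookkeeping when $d=0$ (or $a=0$): then $\alpha^2 = -ad = 0$ contradicts $\alpha\neq 0$ immediately, so that degenerate branch is actually the easy one; the genuinely delicate case is $a,d$ both nonzero with $ad<0$, where one must show that $\alpha^2=-ad$, the divisibility $\beta\mid 1-\alpha^2 = 1+ad$, and the integrality of $\beta = -\alpha b/d$ together with $ad-bc=-1$ and the non-square condition (H2) cannot be simultaneously met. A clean way to close this is to observe that $\alpha^2 = -ad$ forces $ad$ to be (minus) a perfect square, write $ad = -k^2$ so $\alpha = \pm k$, then $bc = 1-k^2$ and $(a+d)^2 = (a-d)^2 + 4ad = (a-d)^2 - 4k^2$; combining with the constraint coming from the second equation that $c = a(1-\alpha^2)/(\alpha\beta) = a(1-k^2)/(\pm k\beta)$, one derives that $(a-d)^2 - 4k^2$ must itself be a perfect square, say because $a-d$ and $2k$ become the legs of a degenerate Pythagorean relation forced by the system — contradicting that $\Delta = (a-d)^2-4$ is a non-square (H2) once one checks $k=1$ is the only surviving possibility, and $k=1$ gives $bc=0$, contradicting hyperbolicity. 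Thus no such $A$ exists.
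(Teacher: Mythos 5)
Your reduction to Case 3 is right, and the relation $\alpha^2=-ad$ that you extract from the first two reversibility equations is a correct consequence of them. But the proof does not close: everything after ``$\alpha^2=-ad$ is already the crux'' is a sketch whose key assertions are unproved, and some are false. First, $\alpha^2=-ad$ uses only two of the three equations and is genuinely insufficient: for the hyperbolic matrix $L=\left(\begin{smallmatrix}4&3\\-1&-1\end{smallmatrix}\right)$ (with $ad-bc=-1$, trace $3$), the choice $\alpha=2$, $\beta=6$ satisfies both $\alpha b+\beta d=0$ and $\alpha\beta c=a(1-\alpha^2)$ with $\alpha^2=-ad=4$; only the third equation and the divisibility $\beta\mid 1-\alpha^2$ rule it out, and here $k=\alpha=2$, so your claim that ``$k=1$ is the only surviving possibility'' fails. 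Second, the quantity $(a-d)^2-4k^2$ you want to be a perfect square equals $(a-d)^2+4ad=(a+d)^2$, so it is \emph{always} a perfect square once $\alpha^2=-ad$; this yields no contradiction and says nothing about $(a-d)^2-4$. Third, ``$(a-d)^2-4$ is not a perfect square'' is not equivalent to (H2), which concerns $(a+d)^2+4$: e.g.\ $L=\left(\begin{smallmatrix}3&4\\1&1\end{smallmatrix}\right)$ is hyperbolic yet $(a-d)^2-4=0$.

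The missing step is a short elimination using all three equations, which is exactly what the paper does. Writing the third equation as $b\alpha^2+\alpha\beta(a+d)+\beta^2c=b$ and substituting $\alpha b=-\beta d$ turns it into $\alpha\beta a+\beta^2c=b$; multiplying by $\alpha$ and replacing $\alpha\beta c$ by $a(1-\alpha^2)$ gives $\beta a=\alpha b$; combined with $\alpha b=-\beta d$ this yields $\beta(a+d)=0$, hence $a+d=0$ since $\beta\neq0$, and $a+d=0$ forces the eigenvalues $\pm1$ when $\det L=-1$, contradicting hyperbolicity. If you replace your Pell/Pythagorean sketch by this elimination you recover the paper's proof; the relation $\alpha^2=-ad$ is then a correct but dispensable by-product.
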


\begin{proof} The only case still open is of the matrices $A=\begin{pmatrix} \alpha & \beta \\ \frac{1-\alpha^2}{\beta} & -\alpha \end{pmatrix}$ with $\alpha, \beta \neq 0$. Let us go back to the three conditions arising from reversibility in this setting:\\
$$\left\{ \begin{array}{c}
\alpha b + \beta d = 0 \\
\alpha \beta c - a(1-\alpha^2)=0 \\
b\alpha^2 + \alpha \beta (a+d)+ \beta^2 c = b.
\end{array}
\right.$$

\medskip

\noindent Replacing on the third equality $\alpha b$ by $-\beta d$, we get
$$\alpha \beta a +\beta^2 c = b.$$
Then, multiplying this equation by $\alpha$, which is nonzero, and turning $\alpha \beta c$ into $a(1-\alpha^2)$, we arrive at $\beta a = \alpha b$. This, joined to $\alpha b =- \beta d$, yields $\beta(a+d)=0$. A $\beta \neq 0$, we must have $a+d=0$, a value banned by the Anosov diffeomorphism $f$.
\end{proof}

\section{Generic examples}\label{Anosov diffeomosphisms}

Given an area-preserving diffeomorphims $f$, the r-centralizer de $f$, we denote by $\mathcal{Z}_r(f)$, is the set of involutions $R$ such that $R \circ f = f^{-1} \circ R$. If $f^2=Id$, then $Id$ and all the powers of $f$ belong to $\mathcal{Z}_r(f)$; and conversely. However, the Kupka-Smale theorem for area-preserving diffeomorphisms \cite{Devaney76} asserts that, given $k\in \mathbb{N}$, $C^1$-generically the periodic orbits of period less or equal to $k$ are isolated. So a generic $f \in \text{Diff}^{~1}_{\mu}(M)$ does not satisfy the equality $f^n=Id$, for any integer $n\neq 0$. Moreover, if $R\neq S$ are in $\mathcal{Z}_r(f)$, then $R \circ S$ belongs to the centralizer of $f$, due to
$$(R \circ S) \circ f = R \circ (S \circ f) = R \circ (f^{-1} \circ S) = f \circ (R \circ S).$$
Now, according to \cite{BCW}, for a $C^1$-generic $f \in \text{Diff}^{~1}_{\mu}(M)$, the centralizer of $f$ is trivial, meaning that it reduces to the powers of $f$. Therefore, there exists $n \in \mathbb{Z}$ such that $S=R\circ f^n$. We will say that $\mathcal{Z}_r(f)$ is trivial if it is either empty or there is an involution $R\neq Id$ generating it in the sense just explained.

\begin{proposition}
$C^1$-generically, the r-centralizer of $f \in \text{Diff}^{~1}_{\mu}(M)$ is trivial.
\end{proposition}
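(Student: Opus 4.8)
The plan is to intersect two residual subsets of $\text{Diff}^{~1}_{\mu}(M)$ already invoked in the paragraph preceding the statement and then argue pointwise on the intersection, using nothing beyond the elementary manipulations of the relation $R\circ f=f^{-1}\circ R$ displayed there.

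First I would set $\mathscr{G}:=\mathscr{G}_1\cap\mathscr{G}_2$, where $\mathscr{G}_1$ is the $C^1$-residual subset of $\text{Diff}^{~1}_{\mu}(M)$ provided by \cite{BCW} on which the ordinary centralizer $\mathcal{Z}(f):=\{g\in\text{Diff}^{~1}_{\mu}(M):g\circ f=f\circ g\}$ reduces to $\{f^n:n\in\mathbb{Z}\}$, and $\mathscr{G}_2$ is the $C^1$-residual subset furnished by the Kupka--Smale theorem \cite{Devaney76} on which $f^n\neq Id$ for every integer $n\neq0$ --- for if $f^n=Id$ then $Fix(f^n)=M$ and the periodic points of period at most $|n|$ would not be isolated. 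Being a finite intersection of residual sets, $\mathscr{G}$ is residual.

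Now fix $f\in\mathscr{G}$. If $\mathcal{Z}_r(f)=\emptyset$ there is nothing to prove, so suppose $\mathcal{Z}_r(f)\neq\emptyset$ and choose $R\in\mathcal{Z}_r(f)$. First, $R\neq Id$, since $R=Id$ would turn the reversibility relation into $f=f^{-1}$, i.e. $f^2=Id$, which is excluded because $f\in\mathscr{G}_2$; note that this step is also what prevents $\mathcal{Z}_r(f)$ from being the singleton $\{Id\}$. Next, let $S\in\mathcal{Z}_r(f)$ be arbitrary. As computed just before the statement, $(R\circ S)\circ f=R\circ(f^{-1}\circ S)=(f\circ R)\circ S=f\circ(R\circ S)$, so $R\circ S$ commutes with $f$; moreover the involutions under consideration are area preserving (each $DR_x\in\SL(2,\mathbb{R})$ forces $R^{*}\mu=\mu$, and likewise for $S$), so $R\circ S\in\text{Diff}^{~1}_{\mu}(M)$ and hence $R\circ S\in\mathcal{Z}(f)$. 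Since $f\in\mathscr{G}_1$, there is $n\in\mathbb{Z}$ with $R\circ S=f^n$, and because $R$ is an involution this gives $S=R\circ f^n$. Thus every element of $\mathcal{Z}_r(f)$ has the form $R\circ f^n$; that is, the non-trivial involution $R$ generates $\mathcal{Z}_r(f)$ in the sense of the definition, so $\mathcal{Z}_r(f)$ is trivial. (Conversely each $R\circ f^n$ does belong to $\mathcal{Z}_r(f)$, since $(R\circ f^n)^2=Id$ and $(R\circ f^n)\circ f=f^{-1}\circ(R\circ f^n)$ by the same kind of computation, so in fact $\mathcal{Z}_r(f)=\{R\circ f^n:n\in\mathbb{Z}\}$.)

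The one point requiring care is the import from \cite{BCW}: one must know that the residual set where the ordinary centralizer is trivial is available inside $\text{Diff}^{~1}_{\mu}(M)$, and that $R\circ S$ is a bona fide element of that space so that the result applies --- both handled above, the latter because the standing hypothesis $DR_x\in\SL(2,\mathbb{R})$ makes every involution in play area preserving. Everything else is formal manipulation of $R\circ f=f^{-1}\circ R$.
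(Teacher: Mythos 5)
Your argument is correct and is essentially the paper's own: the paper leaves the proposition without a formal proof precisely because the preceding paragraph already contains it (intersect the Kupka--Smale residual set where $f^n\neq Id$ for $n\neq 0$ with the residual set of \cite{BCW} where the centralizer is $\{f^n\}$, and observe that $R\circ S$ commutes with $f$). Your only addition is the explicit check that $R\circ S$ is area preserving so that \cite{BCW} applies, a point the paper glosses over; that is a reasonable and correct refinement, not a different route.
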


\medskip

\section{Stability of periodic orbits}\label{Stability}

Let $R:M \rightarrow M$ be an involution such that $DR_x \in \SL(2, \mathbb{R})$, for each $x \in M$, and $Fix(R):=\{x \in M: R(x)=x\}$ is a submanifold of $M$ with dimension equal to $1$. Consider $f \in \text{Diff}^{~1}_{\mu,R}(M)$. For area-preserving diffeomorphisms, hyperbolicity is an open but not dense property. Indeed, the $C^1$-stable periodic points are hyperbolic or elliptic; furthermore, in addition to openness, the area-preserving diffeomorphisms whose periodic points are either elliptic or hyperbolic are generic \cite{R_1970}. A version of Kupka-Smale theorem for reversible area-preserving diffeomorphisms has been established in \cite{Devaney76}. It certifies that, for a generic $f$ in $\text{Diff}^{~1}_{\mu,R}(M)$, all the periodic orbits of $f$ with given period are isolated.

\begin{theorem}\cite{Devaney76}\label{KS1}
Let
$$\mathscr{S}_k:=\{f\in \text{Diff}^{~r}_{\mu,R}(M)\colon \text{ every periodic point of period $\leq k$ is elementary}\}$$
$$\mathscr{S}:=\bigcap_{k\,\in\, \mathbb{N}}\mathscr{S}_k.$$
Then, for each $k,\,r \in \mathbb{N}$, the set $\mathscr{S}_k$ is $C^r$-residual in $\text{Diff}^{~r}_{\mu,R}(M)$. Thus, $\mathscr{S}$ is also $C^r$-residual.
\end{theorem}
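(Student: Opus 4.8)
The plan is to run the classical Kupka--Smale argument one period at a time, with the single extra constraint that every perturbation has to stay inside $\text{Diff}^{~r}_{\mu,R}(M)$, i.e. it must preserve simultaneously the area and the reversibility. Fix $p\in\mathbb{N}$; call a periodic point $x$ of period $p$ \emph{elementary} if $Df^p_x$ has no eigenvalue that is a root of unity (equivalently, on a surface, if $x$ is hyperbolic, or elliptic with irrational rotation number), and let $\mathscr{G}_p$ be the set of $f$ all of whose periodic points of period $p$ are elementary, so that $\mathscr{S}_k=\bigcap_{p=1}^{k}\mathscr{G}_p$ and $\mathscr{S}=\bigcap_{k\in\mathbb{N}}\mathscr{S}_k$. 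First I would record that the complement of $\mathscr{S}_k$ is an $F_\sigma$: for $n\in\mathbb{N}$, the set of $f$ admitting a point $x$ with $f^p(x)=x$ for some $p\le k$ and $\operatorname{tr}(Df^p_x)$ in the finite set $\{2\cos(2\pi q/m):m\le n,\,q\in\mathbb{Z}\}$ is closed, by compactness of $M$ together with the continuity of $x\mapsto Df^p_x$ under $C^1$-convergence; since a matrix of $\SL(2,\mathbb{R})$ has a root-of-unity eigenvalue if and only if its trace lies in such a set, the complement of $\mathscr{S}_k$ is the countable union of these closed sets, so $\mathscr{S}_k$ (hence $\mathscr{S}$) is a $G_\delta$, and since $\text{Diff}^{~r}_{\mu,R}(M)$ is Baire it suffices to prove that each $\mathscr{S}_k$ is $C^r$-dense.

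For density I would argue by induction on $p$: given $f\in\mathscr{G}_1\cap\cdots\cap\mathscr{G}_{p-1}$, I must find arbitrarily $C^r$-small reversible area-preserving perturbations of $f$ that also belong to $\mathscr{G}_p$; lower periods remain untouched because all the perturbations will be supported in small neighbourhoods of the period-$p$ orbits, away from the finitely many orbits of shorter period (finitely many by the inductive hypothesis and compactness). A preliminary perturbation, produced by a transversality argument in the reversible area-preserving category as in the proof of Proposition~\ref{D} (Thom transversality \cite{Guillemin}), makes the fixed-point set of $f^p$ discrete, hence finite; so it is enough to correct one representative of each period-$p$ orbit. Differentiating $f^p\circ R=R\circ f^{-p}$ at a period-$p$ point $x$ shows that $R(x)$ has period $p$ as well and that $Df^p_{R(x)}$ is conjugate to $(Df^p_x)^{-1}$, so $x$ is elementary if and only if $R(x)$ is; consequently the period-$p$ orbits split into: (i) $R$-symmetric orbits, which meet $Fix(R)$; (ii) disjoint pairs $\{\mathcal{O},R(\mathcal{O})\}$ of $(R,f)$-free orbits (the typical case outside $Fix(R)$, by Corollary~\ref{D2}); and (iii) orbits $\mathcal{O}$ with $R(\mathcal{O})=\mathcal{O}$ but $\mathcal{O}\cap Fix(R)=\emptyset$.

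The heart of the matter is a local perturbation lemma. Near a chosen orbit point $x$ one writes the perturbed map as $g=f\circ h_1\circ h_2$, where $h_1$ is a $C^r$-small area-preserving diffeomorphism supported in a small ball $U\ni x$ and $h_2:=f^{-1}\circ(R\circ h_1^{-1}\circ R)\circ f$ is its ``mirror'', supported near $f^{-1}(R(x))$; a direct computation using $R^2=Id$, $\det DR\equiv 1$ and $R\circ f=f^{-1}\circ R$ shows $g\in\text{Diff}^{~r}_{\mu,R}(M)$ and that, if $h_1$ fixes $x$, the new return-map derivative at $x$ equals $Df^p_x$ composed — up to the fixed conjugation by $DR$ along the orbit — with $C:=Dh_1(x)$. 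The matrix $C$ is not free: reversibility confines it to the two-dimensional subvariety of $\SL(2,\mathbb{R})$ of matrices reversed by a linear involution $Q$ built from $DR$ and $Df$ along the orbit (for a pair of type (ii), where $U$ and $R(U)$ can be chosen disjoint from each other and from $Fix(R)$, this constraint disappears and $C$ is genuinely free; for an orbit meeting $Fix(R)$ one first linearizes $R$ near $x$ by the averaging trick — available because $DR_x\in\SL(2,\mathbb{R})$ and $Fix(R)$ is a curve — turning $R$ into $(u,v)\mapsto(u,-v)$ in Moser coordinates). One then checks that the attainable return-map derivatives fill a neighbourhood of $Df^p_x$ inside a two-dimensional variety on which the elementary matrices are open and dense — its complement sits inside the curve of parabolic elements together with countably many curves of rationally elliptic ones — and realizes the required $C$ as the linearization at $x$ of a time-one Hamiltonian map with prescribed quadratic part and the appropriate ($Q$-anti-invariant) symmetry. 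Carrying this out for each of the finitely many period-$p$ orbits, with all perturbations small enough that no new periodic point of period $\le p$ is created, gives a map in $\mathscr{G}_p\cap\mathscr{G}_1\cap\cdots\cap\mathscr{G}_{p-1}$ arbitrarily $C^r$-close to $f$, closing the induction.

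The main obstacle, and the feature that distinguishes the reversible setting from the purely symplectic one, is precisely this local lemma for the orbits that meet $Fix(R)$ (and, marginally, the exceptional $2$-periodic orbits of type (iii)): there the perturbation cannot be localized at a single point, the auxiliary factor $h_2$ acts on another point of the same orbit, and one has to verify both that $g$ stays reversible and that the constrained family of attainable return-map jets still meets the elementary matrices — this is the computation carried out in \cite{Devaney76}, made to work by the averaging linearization of $R$ and the two-dimensionality of the variety of ``$Q$-reversible'' matrices. The remaining ingredients — the finiteness of the periodic set after the preliminary perturbation, the control preventing creation of new low-period orbits, and the passage from the $\mathscr{G}_p$ to $\mathscr{S}_k$ and $\mathscr{S}$ — are routine.
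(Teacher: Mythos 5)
There is a mismatch of expectations here: the paper does not prove Theorem~\ref{KS1} at all --- it is imported wholesale from \cite{Devaney76}, with no argument given --- so there is no internal proof to measure your attempt against. Judged on its own terms, your outline is the right one and is structurally sound: the $G_\delta$ reduction via the trace sets $\{2\cos(2\pi q/m)\colon m\le n\}$ is correct for $\SL(2,\mathbb{R})$ return maps, the induction on the period with transversality of $\operatorname{graph}(f^p)$ to the diagonal is the standard Kupka--Smale scheme, and your ``mirror'' perturbation $g=f\circ h_1\circ h_2$ with $h_2=f^{-1}\circ R\circ h_1^{-1}\circ R\circ f$ is, after simplifying with $R\circ f=f^{-1}\circ R$, literally the construction the paper itself uses in Lemma~\ref{local} of Section~\ref{Franks} (support of $h_2$ equal to $R(f(U))$, same piecewise formula). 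Your dichotomy between symmetric and non-symmetric periodic orbits, and the observation that $Df^p_{R(x)}$ is conjugate to $(Df^p_x)^{-1}$ so that elementarity is $R$-invariant, are also correct. The one place where your text is an outline rather than a proof is exactly the place you flag: for a symmetric periodic orbit the attainable return-map derivatives are confined to the two-dimensional variety of matrices reversed by a linear involution (in normal form, $a=d$ on $\SL(2,\mathbb{R})$), and while it is true that elementary matrices are open and dense there, the claim that the reversible, area-preserving, $C^r$-small perturbations actually fill a neighbourhood of $Df^p_x$ in that variety is asserted and then deferred to \cite{Devaney76}. Since the paper does no better --- it cites the theorem without proof --- this deference is defensible, but you should be explicit that your argument is complete only modulo that realization lemma, which is the genuinely nontrivial content of Devaney's theorem.
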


Therefore, a generic $f \in \text{Diff}^{~r}_{\mu,R}(M)$ has countably many periodic points.

\begin{corollary}\label{E}
There is a residual $\mathscr{E}_r\subset \text{Diff}^{~r}_{\mu, R}(M)$ such that, for any $f\in \mathscr{E}_r$, the set of periodic points of $f$ has Lebesgue measure zero.
\end{corollary}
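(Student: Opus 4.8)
The plan is to take $\mathscr{E}_r := \mathscr{S}$, the $C^r$-residual set furnished by Theorem~\ref{KS1}, and to check that every $f$ in it has only countably many periodic points; since the normalized area $\mu$ is mutually absolutely continuous with Lebesgue measure in every chart, a countable set is both $\mu$-null and Lebesgue-null, which is exactly the conclusion.

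First I would fix $f\in\mathscr{S}$ and $k\in\mathbb{N}$ and look at $\mathrm{Fix}(f^k)$, which contains every periodic point of $f$ whose period divides $k$. Because $f\in\mathscr{S}_k$, each $p\in\mathrm{Fix}(f^k)$ is elementary, so the linearized return map $Df^k_p$ does not have $1$ as an eigenvalue (it is either hyperbolic or elliptic with eigenvalues $e^{\pm i\theta}$, $e^{i\theta}$ not a root of unity); in particular $Df^k_p-\mathrm{Id}$ is invertible. Reading $f^k$ in a local chart around $p$ and applying the inverse function theorem to $f^k-\mathrm{Id}$, one concludes that $p$ is an isolated point of $\mathrm{Fix}(f^k)$. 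Since $\mathrm{Fix}(f^k)$ is closed and $M$ is compact, it is therefore finite.

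Then I would finish: the set of periodic points of $f$ equals $\bigcup_{k\in\mathbb{N}}\mathrm{Fix}(f^k)$, a countable union of finite sets, hence countable, and so of zero area. As this holds for every $f$ in the residual set $\mathscr{S}$, the corollary follows with $\mathscr{E}_r=\mathscr{S}$. There is essentially no obstacle beyond invoking Theorem~\ref{KS1}; the only point that deserves a line of justification is why ``elementary'' forces the periodic points of each period to be isolated, which is precisely the invertibility of $Df^k_p-\mathrm{Id}$ noted above together with compactness of $M$.
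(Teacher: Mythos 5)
Your proposal is correct and is essentially the paper's own argument: the authors simply note, immediately after Theorem~\ref{KS1}, that a generic $f$ has countably many periodic points (finitely many of each period, since elementary periodic points are isolated and $M$ is compact), and the corollary follows because countable sets are Lebesgue-null. You have merely filled in the standard detail that ``elementary'' implies $Df^k_p-\mathrm{Id}$ is invertible, hence isolation via the inverse function theorem.
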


In \cite{Webster}, the author states generic properties of reversible vector fields on $3$-dimensional manifolds. To convey those features to diffeomorphisms on surfaces, we take the vector field defined by suspension of a reversible diffeomorphism $f:M\rightarrow M$, without losing differentiability \cite{PT}, acting on a quotient manifold $\bar{M}=M\times \mathbb{R}/\sim$ where it is transversal to the section $M\times {0}/\sim$. This vector field is reversible with respect to the involution obtained by projecting $R\times (-Id)$, whose fixed point set is still a submanifold of dimension $1$ of $\bar{M}$. This way, we deduce from \cite{Webster} that:

\begin{proposition}\label{KS2}
A generic $f\in \text{Diff}^{~r}_{\mu,R}(M)$ has only asymmetric fixed points and all its periodic orbits are hyperbolic or elliptic.
\end{proposition}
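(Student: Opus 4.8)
The plan is to carry out the reduction announced just before the statement: suspend $f$ and quote \cite{Webster}. First I would form the mapping torus $\bar{M}=M\times\mathbb{R}/\!\sim$, where $(x,s+1)\sim(f(x),s)$, together with the suspension vector field $X$ induced by $\partial_{s}$; by \cite{PT} this can be done so that $X$ is $C^{r}$ and so that $\cS:=M\times\{0\}/\!\sim$ is a $C^{r}$ global cross-section whose first-return map is $f$ itself. The identity $R\circ f=f^{-1}\circ R$ is precisely what forces the map $(x,s)\mapsto(R(x),-s)$ on $M\times\mathbb{R}$ to descend to an involution $\Phi$ of $\bar{M}$, and since it sends $\partial_{s}$ to $-\partial_{s}$ the flow of $X$ is $\Phi$-reversible. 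A direct inspection of $D\Phi$ along its fixed-point set — using the standing hypotheses $DR_{x}\in\SL(2,\mathbb{R})$ and $\dim Fix(R)=1$ — gives, as asserted in the text, that $Fix(\Phi)$ is a one-dimensional submanifold of the three-manifold $\bar{M}$. To keep $f\mapsto X$ valued in a fixed space I would work inside a fixed isotopy class of $\text{Diff}^{~r}_{\mu,R}(M)$, which is harmless for a Baire-category statement since it is a local issue.

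Next I would set up the dictionary between the two dynamics. Closed orbits of $X$ are in bijection with the periodic orbits of $f$, and those of period $1$ with the fixed points of $f$; along a closed orbit associated to a periodic point of period $k$, the linear first-return (Poincar\'e) map of $X$ is conjugate to $Df^{k}$ at that point, so the words ``hyperbolic'' and ``elliptic'' carry the same meaning on both sides, and this notion depends only on the orbit foliation of $X$ near $\cS$, not on the time-parametrization of $X$. Moreover a closed orbit of $X$ meets $Fix(\Phi)$ precisely when the corresponding periodic orbit of $f$ is $R$-symmetric; in particular the period-$1$ closed orbits meeting $Fix(\Phi)$ are exactly the fixed points of $f$ lying in $Fix(R)$, that is, the symmetric ones. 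Hence the property ``$f$ has only asymmetric fixed points and all its periodic orbits are hyperbolic or elliptic'' is, under $f\mapsto X$, the translation of ``no closed orbit of $X$ meets $Fix(\Phi)$ and every closed orbit of $X$ is hyperbolic or elliptic'', which is among the $C^{r}$-generic properties of reversible vector fields on a three-manifold proved in \cite{Webster}: for the first half a closed orbit (a curve) and the curve $Fix(\Phi)$ generically miss one another inside the three-dimensional $\bar{M}$, and the second half is the standard reversible/conservative dichotomy for the two-dimensional return map of a closed orbit.

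It then remains to convert ``$C^{r}$-residual among reversible vector fields on $\bar{M}$'' into ``$C^{r}$-residual in $\text{Diff}^{~r}_{\mu,R}(M)$''. Here I would use that (i) the suspension $f\mapsto X_{f}$ is a continuous embedding of $\text{Diff}^{~r}_{\mu,R}(M)$ into the $\Phi$-reversible $C^{r}$ vector fields on $\bar{M}$, (ii) having $\cS$ as a global cross-section is a $C^{r}$-open condition, and (iii) the properties in Webster's theorem are \emph{orbital}, i.e. invariant under time-reparametrization, so that a field and the constant-speed suspension of its first-return map satisfy exactly the same ones. Granting (i)--(iii), each open dense ``good'' set of reversible flows meets the image of the suspension in a set dense there, whence $(f\mapsto X_{f})^{-1}$ of Webster's residual set is residual in $\text{Diff}^{~r}_{\mu,R}(M)$; by the dictionary of the previous paragraph its elements have only asymmetric fixed points and all periodic orbits hyperbolic or elliptic. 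I expect the genuine work to sit in two places. The first, and main, obstacle: Webster's perturbation lemmas live a priori among \emph{all} reversible vector fields, whereas $X_{f}$ preserves the suspended volume $\mu\wedge ds$; one must check that those perturbations can be performed while keeping this volume invariant — equivalently, reinterpreted as area-preserving and $R$-preserving perturbations of $f$, localized near the finitely many relevant periodic orbits and near $Fix(R)$ (the techniques of Section~\ref{Franks} are the natural tool here). The second, more routine, obstacle is the Baire-category bookkeeping of (i)--(iii): the openness of the ``global cross-section'' condition and the verification that Webster's density arguments can be relocalized along suspensions, which is what legitimizes transporting residuality from flows to diffeomorphisms.
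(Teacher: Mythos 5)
Your proposal follows exactly the paper's route: the paper's entire justification for Proposition~\ref{KS2} is the paragraph preceding the statement, which suspends $f$ to a reversible vector field on $\bar{M}=M\times\mathbb{R}/\sim$ via \cite{PT}, observes that the projection of $R\times(-Id)$ is a reversing involution whose fixed-point set is still a one-dimensional submanifold, and cites \cite{Webster}. Your elaboration --- the orbit dictionary between closed orbits of the suspension and periodic points of $f$, and especially the two obstacles you flag (compatibility of Webster's perturbations with the suspended volume, and the Baire-category transfer of residuality from flows back to diffeomorphisms) --- is in fact more detailed than what the paper provides, which passes over both issues in silence.
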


\section{Local perturbations}\label{Franks}

Let $R:M \rightarrow M$ be an involution as in the previous section. Consider $f \in \text{Diff}^{~1}_{\mu,R}(M)$. If $p \in M$ and we differentiate the equality $R \circ f = f^{-1} \circ R$ at $p$, then we get $DR_{f(p)} \circ Df_p = Df_{R(p)}^{-1}\circ DR_p$, a linear constraint between four matrices of $\SL(2,\mathbb{R})$, two of which are also linked through the equality $R^2=Id$. As the dimension $\SL(2,\mathbb{R})$ is $3$, there is some room to perform non-trivial perturbations.

In this section, we set two perturbation schemes that are the ground of the following sections. The first one describes a local small $C^1$ perturbation within reversible area-preserving diffeomorphisms in order to change a map and its derivative at a point, provided $x$ has a $(R,f)$-free non-periodic orbit by $f$. The second one is inspired by Franks' Lemma (\cite{F}), proved for dissipative diffeomorphisms, and allows to perform locally small abstract perturbations, within the reversible setting, on the derivative along a segment of an orbit of an area-preserving reversible diffeomorphism. These perturbation lemmas have been proved in the $C^1$ topology only, for reasons appositely illustrated in \cite{PSa, BDV}.

\subsection{1st perturbation lemma}\label{Franks1}

Consider $f \in \text{Diff}^{~1}_{\mu,R}(M)$ and take a point $x\in M$ whose orbit by $f$ is not periodic and $f(x)\not= R(x)$. Notice that those points exist if $f\in\mathscr{D}_1\cap\mathscr{E}_1$, as described in Proposition~\ref{D} and Corollary~\ref{E}. We will see how to slightly change $f$ and $Df$ at a small neighborhood of $x$ without losing reversibility.

Denote by $B(x,\rho)$ the open ball centered at $x$ with radius $\rho$ and by $C$ the union $B(x,\rho) \cup R(f(B(x,\rho)))$.

\begin{lemma}\label{local}
Given $f \in \text{Diff}^{~1}_{\mu,R}(M)$ and $\eta>0$, there exist $\rho>0$ and $\zeta>0$ such that, for any point $x\in M$, whose orbit by $f$ is not periodic and $f(x)\not= R(x)$, and every $C^1$ area-preserving diffeomorphism $h\colon M\rightarrow M$, coinciding with the Identity in $M\backslash B(x,\rho)$ and $\zeta$-$C^1$-close to the Identity, there exists $g\in\text{Diff}^{~1}_{\mu, R}(M)$ which is $\eta$-$C^1$-close to $f$ and such that $g = f$ outside $C$ and $g=f\circ h$ in $B(x,\rho)$.
\end{lemma}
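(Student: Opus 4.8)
The idea is to build $g$ by conjugating a carefully chosen, purely local perturbation so as to enforce the reversibility relation by hand. Fix a point $x$ as in the statement, with non-periodic $f$-orbit and $f(x)\neq R(x)$; by Lemma~\ref{free2}, the $f$-orbit of $x$ is $(R,f)$-free, so $f(x)\neq R(f^n(x))$ for all $n$. Choose $\rho>0$ small enough that the ball $B(x,\rho)$, its image $f(B(x,\rho))$, the set $R(f(B(x,\rho)))$, and all further relevant iterates $f^n(B(x,\rho))$, $R(f^n(B(x,\rho)))$ ($n$ in a bounded range) are pairwise disjoint; this is possible precisely because the orbit is non-periodic and $(R,f)$-free, and the disjointness persists after a small $C^1$ perturbation of $f$ since it is an open condition on finitely many compact sets. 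Shrinking $\rho$ further (and choosing $\zeta$ accordingly) keeps everything inside a single chart, which lets us pretend we are working in $\mathbb{R}^2$ with the standard area form.

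The construction: given $h$ supported in $B(x,\rho)$, area-preserving and $\zeta$-$C^1$-close to the identity, set
$$
g := \begin{cases}
f\circ h & \text{on } h^{-1}(B(x,\rho)) \text{ (equivalently, on a neighbourhood of } B(x,\rho)\text{)},\\
R\circ h^{-1}\circ f^{-1}\circ R & \text{on } R(f(B(x,\rho))),\\
f & \text{elsewhere.}
\end{cases}
$$
The three pieces of the domain are $B(x,\rho)$, the set $C\setminus B(x,\rho)\subseteq R(f(B(x,\rho)))$, and $M\setminus C$; by the disjointness arranged above these overlap only along $\partial B(x,\rho)$ and $\partial(R(f(B(x,\rho))))$, where $h=\mathrm{Id}$ and $h^{-1}=\mathrm{Id}$, so all three formulas agree with $f$ there and $g$ is a well-defined $C^1$ diffeomorphism. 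Each piece is area-preserving: $f$ and $h$ are, and $R\in\SL(2,\mathbb{R})$ at every point — hence $R$ is area-preserving (up to sign, and being an involution it is locally orientation-preserving or we absorb the sign) — so $R\circ h^{-1}\circ f^{-1}\circ R$ preserves $\mu$ as well. Finally $g$ is $R$-reversible: one checks $R\circ g = g^{-1}\circ R$ piece by piece. On $M\setminus C$ this is just $R\circ f = f^{-1}\circ R$. On $B(x,\rho)$: $R\circ (f\circ h) = (R\circ f)\circ h = (f^{-1}\circ R)\circ h$; meanwhile $g^{-1}$ on $R(f(B(x,\rho)))$ is $R\circ f\circ h\circ R$, wait — rather, $g^{-1} = R\circ f\circ h\circ R$ there, so $g^{-1}\circ R$ on $B(x,\rho)$ equals $R\circ f\circ h\circ R\circ R = R\circ f\circ h$; this matches $R\circ g$ on $B(x,\rho)$ after using $R\circ f\circ h = (f^{-1}\circ R\circ h)$? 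Here one must be careful: the clean way is to observe that $g$ was \emph{defined} on the block $R(f(B(x,\rho)))$ to be exactly $R\circ(f\circ h)^{-1}\circ R$, i.e. $R\circ g|_{B(x,\rho)}^{-1}\circ R$, so the relation $R\circ g = g^{-1}\circ R$ holds on $B(x,\rho)$ and $R(f(B(x,\rho)))$ simultaneously by construction and symmetry of the two blocks under $R$.

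**The $C^1$-estimate and the main obstacle.** It remains to produce $\rho,\zeta$, depending only on $f$ and $\eta$, making $g$ genuinely $\eta$-$C^1$-close to $f$. On $B(x,\rho)$ we have $g=f\circ h$, and since $h$ is $\zeta$-close to the identity in $C^1$, $d_{C^1}(f\circ h, f)\le \|Df\|_{C^0}\cdot\zeta + o(\zeta)$ uniformly in $x$ by compactness of $M$; so this block costs $O(\zeta)$. On the block $R(f(B(x,\rho)))$ we have $g = R\circ h^{-1}\circ f^{-1}\circ R$ versus $f = R\circ f^{-1}\circ R$ (the latter identity is just reversibility of $f$), so $d_{C^1}(g,f)$ on this block is controlled by $\|DR\|_{C^0}\cdot\|Df^{-1}\|_{C^0}\cdot d_{C^1}(h^{-1},\mathrm{Id})\cdot(\text{another }\|DR\|_{C^0})$, again $O(\zeta)$ with constants depending only on $R$ and $f$. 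Hence choosing $\zeta$ so that these constants times $\zeta$ is below $\eta$ suffices, and $\rho$ is then fixed by the disjointness requirement. The one genuine subtlety — the part I expect to need the most care — is that the disjointness of $B(x,\rho)$, $f(B(x,\rho))$, $R(f(B(x,\rho)))$ (and whatever finite family of further iterates the later sections will actually use) must be arranged \emph{uniformly} as $x$ ranges over the relevant (open, dense) set of points with non-periodic $(R,f)$-free orbit, whereas the minimal separation between an orbit and its $R$-image can a priori degenerate as $x$ approaches $Fix(R)$ or a periodic point. The resolution is that the lemma only needs to be applied at a single given $x$ (the statement quantifies "for any point $x$" but $\rho,\zeta$ may be chosen after a uniform compactness argument on $f$ alone, and the blocks needed are just $B(x,\rho)$, $f(B(x,\rho))$ and their $R$-images, a fixed finite list), so $\rho$ can be taken as a uniform function of the $C^1$-modulus of continuity of $f$ and $R$ together with the (locally constant on the good set) geometry near $x$; if uniformity truly fails one weakens the statement to "there exists $\rho(x)>0$", which is all the subsequent perturbation arguments require.
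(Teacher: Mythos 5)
Your construction is exactly the paper's: define $g=f\circ h$ on $B(x,\rho)$, $g=R\circ h^{-1}\circ f^{-1}\circ R$ on the balancing block $R(f(B(x,\rho)))$, and $g=f$ elsewhere, then verify reversibility blockwise and bound the $C^1$-distance by constants involving $\|f\|_{C^1}$ and $\|R\|_{C^1}$ times $\zeta$; this is essentially the same proof. Your remark that the disjointness (hence $\rho$) may in fact depend on $x$ is a fair reading of the statement's quantifiers, and the paper's own proof also chooses $\rho$ after fixing $x$, so nothing is lost.
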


\begin{proof}
Using the uniform continuity of $f$ on the compact $M$ and the fact that $f$ is $C^1$, we may choose $\tau> 0$ such that, each time the distance between two points $z$ and $w$ of $M$ is smaller than $\tau$, then the distance between their images by $f$, the norm of the difference of the linear maps $Df_z$ and $Df_w$ and the norm of the difference of the linear maps $DR_z$ and $DR_w$ are smaller than $\min\,\left\{\frac{\eta}{2}, \frac{\eta}{2\,\|f\|_{C^1}\,\|R\|_{C^1}}\right\}$.

As $f(x)\not= R(x)$, calling on the continuity of both $f$ and $R$ we may find $0< \rho <\tau$ such that the open ball $B(x,\rho)$ satisfies $f(B(x,\rho))\cap R(B(x,\rho))=\emptyset$ (or, equivalently, $B(x,\rho)\cap R(f(B(x,\rho)))=\emptyset$) and $B(x,\rho)\cap f(B(x,\rho))=\emptyset$. (See Figure~\ref{Closing Lemma 1A}.)

\begin{figure}\begin{center}
\includegraphics[height=5cm]{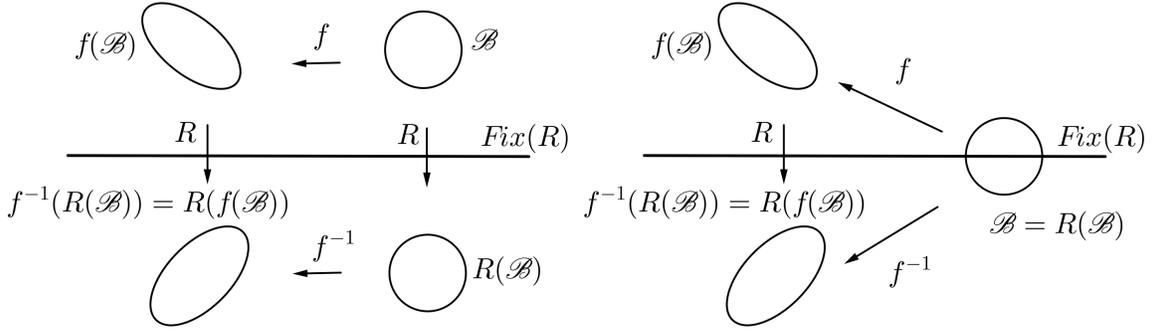}
\end{center}
\caption{Illustration of the 1st perturbation lemma: $B$ is the ball $B(x,r)$.}
\label{Closing Lemma 1A}
\end{figure}

Afterwards, consider
$$\zeta:=\frac{1}{2}\,\min\,\displaystyle\left\{\tau, \frac{\eta}{2\,\max\,\{\,\,\|f\|_{C^1}\,(\|R\|_{C^1})^2, \|f\|_{C^1}\,\,\}}\right\}$$
and take a $C^1$ area-preserving diffeomorphism $h\colon M\rightarrow M$ equal to the Identity in $M\backslash~B(x,\rho)$ and $\zeta$-$C^1$-close to the Identity. Finally, define $g:M \rightarrow M$ by

\begin{itemize}
\item $g=f\,\,$ outside $C$.
\item $g=f\circ h\,\,$ in $B(x,\rho)$.
\item $g= R\circ h^{-1} \circ f^{-1}\circ R\,\,$ in $R(f(B(x,\rho)))$.
\item $g= f\,\,$ in $R(B(x,\rho)) \cup f(B(x,\rho))$.
\end{itemize}

We begin by showing that the equality $R\circ g=g^{-1}\circ R$ holds. If $y\notin B(x,\rho)\cup f(B(x,\rho))$, then $R(y)$ is also out of this union and, therefore, $g(y)=f(y)$ and $g^{-1}(R(y))=f^{-1}(R(y))$. Hence $R(g(y))= R(f(y))=f^{-1}(R(y))= g^{-1}(R(y)).$ If $y\in B(x,\rho)$, then $R(y)\in R(B(x,\rho))$ and so
$$R(g(y))=R(f \circ h)(y)=R(f \circ h)(R\circ R)(y)=(R\circ h^{-1}\circ f^{-1} \circ R)^{-1}(R(y))=g^{-1}(R(y)).$$
Analogous computations prove the reversibility condition on $R(f(B(x,\rho)))$. Finally, if $y\in R(B(x,\rho))$, then $R(y)\in B(x,\rho)$ and $R(g(y))=R(f(y))= f^{-1}(R(y))= g^{-1}(R(y))$. Similar reasoning works for $y\in f(B(x,\rho))$.

Now we need to check that $g$ is $\eta$-$C^1$-close to $f$.\\

\noindent (a) $C^0$-approximation.\\

By definition, the differences between the values of $g$ and $f$ are bounded by the distortion the map $h$ induces on the ball $B(x,\rho)$ plus the effect that deformation creates on the first iterate by $f$ and the action of $R$ (which preserves distances locally). Now, for $z\in B(x,\rho)$, the distance between $h(z)$ and $z$ is small than $\zeta$, which is smaller than $\tau$. So, by the choice of $\tau$, the distance between $g(z)$ and $f(z)$ is smaller than $\eta$.\\

\noindent (b) $C^1$-approximation.\\

We have to estimate, for $z\in B(x,\rho)$, the norm $\|Df_z - Dg_z\|=\|Df_z - Df_{h(z)}(Dh_z)\|$ and, for $z \in R(f(B(x,\rho)))$, $\|Df_z - D(R\circ h^{-1}\circ f^{-1}\circ R)_z\|.$ Concerning the former, from the choices of $\tau$ and $\zeta$, we have
\begin{eqnarray*}
\|Df_z - Df_{h(z)} Dh_z\| &\leq& \|Df_z - Df_{h(z)}\| + \|Df_{h(z)}- Df_{h(z)} \,Dh_z\| \\
&\leq& \frac{\eta}{2}+ \|f\|_{C^1} \,\|Id_z - Dh_z\| \\
&\leq& \frac{\eta}{2}+ \|f\|_{C^1}\,\zeta < \eta.
\end{eqnarray*}
Regarding the latter,
\begin{eqnarray*}
& & \|Df_z - D(R\circ h^{-1}\circ R\circ f)_z\|= \\
&=& \|Df_z - D(R\circ h^{-1}\circ R)_{f(z)}\,Df_z\| \\
&\leq & \|Id_{f(z)} - D(R\circ h^{-1} \circ R)_{f(z)}\|\,\|f\|_{C^1}  \\
&=& \|DR_{R(f(z))}\,DR_{f(z)} - D(R\circ h^{-1})_{R(f(z))}\,DR_{f(z)}\|\,\|f\|_{C^1} \\
&\leq & \|DR_{R(f(z))} - D(R\circ h^{-1})_{R(f(z))}\|\,\|f\|_{C^1}\,\|R\|_{C^1} \\
&\leq & \|DR_{R(f(z))} - DR_{h^{-1}(R(f(z)))} \, Dh^{-1}_{R(f(z))}\|\,\|f\|_{C^1}\,\|R\|_{C^1} \\
&\leq & \frac{\eta}{2} + \|Id_{R(f(z))} - Dh^{-1}_{R(f(z))}\|\,\|f\|_{C^1}\,(\|R\|_{C^1})^2 \\
&\leq & \frac{\eta}{2} + \zeta \,\|f\|_{C^1}\,(\|R\|_{C^1})^2 < \eta.
\end{eqnarray*}
\end{proof}

As the set of Anosov area-preserving diffeomorphisms on the torus is $C^1$-open, combi\-ning the information of both Proposition~\ref{FindL} and Lemma~\ref{local} with the fact that an involution is not an Anosov, we conclude that:

\begin{corollary}
For any non-trivial linear involution $R$ on the torus, the (non-empty) space of area-preserving, $R$-reversible Anosov diffeomorphisms on the torus has no isolated points.
\end{corollary}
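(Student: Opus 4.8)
The plan is to fix an arbitrary area-preserving $R$-reversible Anosov diffeomorphism $f$ of $\mathbb{T}^2$ — such an $f$ exists by Proposition~\ref{FindL} — together with a $C^1$-neighbourhood $\mathscr{U}$ of $f$ inside $\text{Diff}^{~1}_{\mu,R}(\mathbb{T}^2)$, and to produce inside $\mathscr{U}$ an area-preserving $R$-reversible diffeomorphism $g\neq f$ which is still Anosov. Since the set of area-preserving Anosov diffeomorphisms of $\mathbb{T}^2$ is $C^1$-open, I fix $\eta>0$ so small that every diffeomorphism $\eta$-$C^1$-close to $f$ is both Anosov and contained in $\mathscr{U}$; it then suffices to find such a $g$ within $C^1$-distance $\eta$ of $f$.

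The next step is to exhibit a suitable base point for the perturbation, namely a point $x\in\mathbb{T}^2$ whose $f$-orbit is not periodic and which satisfies $f(x)\neq R(x)$. This is exactly where the fact that an involution is not an Anosov diffeomorphism enters: it forces $f\neq R$, hence $R\circ f\neq Id$ (because $R^2=Id$), so that $Fix(R\circ f)=\{x\in\mathbb{T}^2:\ f(x)=R(x)\}$ is a closed \emph{proper} subset of $\mathbb{T}^2$ and its complement $V$ is a non-empty open set, hence uncountable. Because $f$ is Anosov its periodic points form a countable set (for each $n$ the set $Fix(f^n)$ consists of hyperbolic, hence isolated, points of the compact manifold $\mathbb{T}^2$), so $V$ contains a non-periodic point $x$; this $x$ has the two properties demanded by Lemma~\ref{local}.

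Finally, I apply Lemma~\ref{local} to $f$ and $\eta$, obtaining radii $\rho>0$ and $\zeta>0$, and I use it with this base point $x$ and with any area-preserving $C^1$ diffeomorphism $h$ of $\mathbb{T}^2$ that equals the Identity outside $B(x,\rho)$, is $\zeta$-$C^1$-close to the Identity, and is \emph{not} the Identity on $B(x,\rho)$ — for instance the time-one map of a sufficiently small, non-constant, compactly supported Hamiltonian flow in $B(x,\rho)$. The lemma then yields $g\in\text{Diff}^{~1}_{\mu,R}(\mathbb{T}^2)$ which is $\eta$-$C^1$-close to $f$ — hence Anosov and inside $\mathscr{U}$ — and which coincides with $f\circ h$ on $B(x,\rho)$; as $h\neq Id$ there and $f$ is injective, $g\neq f$. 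Therefore $f$ is not an isolated point, which is the assertion. I do not expect a genuine obstacle in carrying this out: the only delicate point is checking that the hypotheses of Lemma~\ref{local} are available, i.e. the existence of the non-periodic, non-$R$-symmetric base point, which is the content of the second step and is precisely where one uses that an involution cannot be Anosov.
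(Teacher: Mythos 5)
Your argument is correct and is essentially the paper's own: the authors obtain the corollary by combining the $C^1$-openness of the Anosov property, Proposition~\ref{FindL} for non-emptiness, Lemma~\ref{local} for the perturbation, and the observation that an involution cannot be Anosov (which, exactly as you explain, guarantees a base point with $f(x)\neq R(x)$, while countability of the periodic points of an Anosov map gives non-periodicity). You have merely spelled out the details that the paper leaves implicit, and your verification that $g\neq f$ via a non-trivial Hamiltonian bump $h$ is the natural way to finish.
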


\subsection{2nd perturbation lemma}\label{Franks2}

We will now consider an area-preserving reversible diffeomorphism, a finite set in $M$ and an abstract tangent action that performs a small perturbation of the derivative along that set. Then we will search for an area-preserving reversible diffeomorphism, $C^1$ close to the initial one, whose derivative equals the perturbed cocycle on those iterates. To find such a perturbed diffeomorphism, we will benefit from the argument, suitable for area-preserving systems, presented in \cite{BDP}. But before proceeding, let us analyze the following example.

\begin{example}\label{noFranks}
Take the linear involution $R$ induced on the torus by the linear matrix $A(x,y)=(x,-y)$, and consider the diffeomorphism $f=R$. Clearly, $R\circ f=f^{-1}\circ R$. The set of fixed points of $f$, say $Fix(f)$, is the projection on the torus of $[0,1]\times\{0\} \cup [0,1] \times\{\frac{1}{2}\}$, and so it is made up by two closed curves. All the other orbits of $f$ are periodic with period $2$. Given $p\notin Fix(f)$, we have $Df_p=Df_{f(p)}=\begin{pmatrix}1 & 0 \\ 0 & -1 \end{pmatrix}$. Now, if $\eta>0$ and
$$L(p)=\begin{pmatrix}1+\eta & 0 \\ \eta & -\frac{1}{1+\eta} \end{pmatrix} \,\,\,\,\,\,\,\,\,\,\,\, L(f(p))=\begin{pmatrix}1+\eta & 0 \\ 0 & -\frac{1}{1+\eta} \end{pmatrix}$$
we claim that no diffeomorphism $g$ on the torus such that $Dg_p=L(p)$, $Dg_{f(p)}=L(f(p))$ and $g(p)=f(p)$ can be $R$-reversible. Indeed, differentiating the equality $R\circ g=g^{-1}\circ R$ at $p$, we would get
$$A\circ Dg_p=Dg^{-1}_{R(p)}\circ A= Dg^{-1}_{f(p)}\circ A$$
that is,
$$\begin{pmatrix}1 & 0 \\ 0 & -1 \end{pmatrix}\begin{pmatrix}1+\eta & 0 \\ \eta & -\frac{1}{1+\eta} \end{pmatrix}=\begin{pmatrix}\frac{1}{1+\eta} & 0 \\ \eta & -\frac{1}{1+\eta} \end{pmatrix}\begin{pmatrix}1 & 0 \\ 0 & -1 \end{pmatrix}$$
which would imply that
$\begin{pmatrix}1+\eta & 0 \\ -\eta & \frac{1}{1+\eta} \end{pmatrix}=\begin{pmatrix}\frac{1}{1+\eta} & 0 \\ \eta & \frac{1}{1+\eta}\end{pmatrix},$
an impossible equality when $\eta > 0$.

\end{example}

\medskip

This example evinces the need to impose some restrictions on the set where we wish to carry the perturbation.

\begin{lemma}\label{Franks3}
Fix an involution $R$ and $f\in\text{Diff}^{~1}_{\mu,R}(M)$. Let $X:=\{x_1,x_2,...,x_k\}$ be a finite $(R,f)$-free set of distinct points in $M$ whose orbits by $f$ are not periodic. Denote by $V=\oplus_{x\in X} T_x M$ and $V'=\oplus_{x\in X} T_{f(x)} M$ and let $P\colon V\rightarrow V'$ be a map such that, for each $x\in X$, $P(x) \in \SL(T_x M \rightarrow T_{f(x)} M)$. For every $\eta>0$, there is $\zeta>0$ such that, if $\|P-Df\|<\zeta$, then there exists $g\in\text{Diff}^{~1}_{\mu,R}(M)$ which is $\eta$-$C^1$-close to $f$ and satisfies $Dg_x=P|_{T_x M}$ for every $x\in X$. Moreover, if $K\subset M$ is compact and $K\cap X=\emptyset$, then $g$ can be found so that $g=f$ in $K$.
\end{lemma}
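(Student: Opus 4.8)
The plan is to first realize the prescribed infinitesimal data by the classical area-preserving perturbation technique localized near $X$, and then to restore reversibility by reflecting that perturbation through $R$, copying the scheme of Lemma~\ref{local}. Fix $\eta>0$. Since the $x_i$ are distinct, since their orbits are not periodic (as required to apply \cite{BDP}), and since $X$ is $(R,f)$-free — so $f(x_i)\neq R(x_j)$ for all $i,j$ — we may choose $\rho>0$ so small that, writing $U:=\bigcup_{i=1}^{k} B(x_i,\rho)$, the balls $B(x_i,\rho)$ are pairwise disjoint, $U$ and $R(f(U))$ are disjoint from $K$, and, crucially,
$$U\cap R\bigl(f(U)\bigr)=\emptyset\qquad\text{and}\qquad f(U)\cap R(U)=\emptyset .$$
Then I would invoke the area-preserving version of Franks' Lemma from \cite{BDP}: there are an auxiliary $\eta_0>0$ and a $\zeta>0$ such that, whenever $\|P-Df\|<\zeta$, each $A_i:=(Df_{x_i})^{-1}P(x_i)\in\SL(T_{x_i}M)$ is close enough to the identity for there to exist an area-preserving diffeomorphism $h_i\colon M\to M$, equal to the identity off $B(x_i,\rho)$, $\eta_0$-$C^1$-close to the identity, and with $D(h_i)_{x_i}=A_i$. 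Assembling the $h_i$ into one area-preserving diffeomorphism $h$ (the identity off $U$) and putting $g_0:=f\circ h$, one gets $g_0=f$ off $U$ and $Dg_0|_{x_i}=Df_{x_i}A_i=P(x_i)$.

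The reversible repair follows Lemma~\ref{local} verbatim. Define
$$g:=\begin{cases} f\circ h & \text{on } U,\\[1mm] R\circ h^{-1}\circ f^{-1}\circ R & \text{on } R\bigl(f(U)\bigr),\\[1mm] f & \text{on } M\setminus\bigl(U\cup R(f(U))\bigr).\end{cases}$$
The disjointness $U\cap R(f(U))=\emptyset$ makes $g$ unambiguous, and because $h$ and $h^{-1}$ equal the identity near $\partial U$ the three formulas agree with $f$ along the common boundaries, so $g$ is a $C^1$ map. It is a diffeomorphism: $g$ carries $U$ onto $f(U)$, $R(f(U))$ onto $R(U)$, and $M\setminus(U\cup R(f(U)))$ onto $M\setminus(f(U)\cup R(U))$, which partition $M$ thanks to $f(U)\cap R(U)=\emptyset$. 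Each branch is a composite of $\mu$-preserving maps — here one uses that $R$ preserves $\mu$, which follows from $|\det DR|=1$ — so $g\in\text{Diff}^{~1}_\mu(M)$; and $g=f$ on $K$ while $Dg_{x_i}=P(x_i)$ by construction. It then remains to verify that $g$ is $R$-reversible and $\eta$-$C^1$-close to $f$. For reversibility one checks $R\circ g=g^{-1}\circ R$ pointwise, distinguishing, for $y\in M$, which of $U$, $R(f(U))$ or the complement contains $y$ and which contains $R(y)$; in every case the disjointness identifies the relevant branch of $g$ at $y$ and at $R(y)$, and the identities $R^2=\mathrm{Id}$ and $R\circ f=f^{-1}\circ R$ close the computation, exactly as at the end of the proof of Lemma~\ref{local}. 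For the estimate, on $U$ one has $\|Dg_z-Df_z\|\le\|f\|_{C^1}\|Dh_z-\mathrm{Id}\|\le\|f\|_{C^1}\eta_0$, and on $R(f(U))$ the computation performed at the close of the proof of Lemma~\ref{local} bounds $\|Dg_z-Df_z\|$ by a quantity of the order of $\tfrac{\eta}{2}+\eta_0\|f\|_{C^1}(\|R\|_{C^1})^2$; picking $\eta_0$ small relative to $\eta$, $\|f\|_{C^1}$ and $\|R\|_{C^1}$ then gives the $\eta$-$C^1$-proximity.

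The one genuinely non-routine ingredient is the area-preserving, $C^1$-small, compactly supported realization of the linear parts $A_i$, which is precisely what the argument of \cite{BDP} supplies; Example~\ref{noFranks} shows that without the $(R,f)$-freeness of $X$ no such reversible correction can exist, and indeed $(R,f)$-freeness is exactly what delivers the two disjointness relations above, on which the reflection construction rests. Everything after the appeal to \cite{BDP} is the gluing of Lemma~\ref{local}, now carried out simultaneously near the finitely many points of $X$, so I expect no additional obstacle there.
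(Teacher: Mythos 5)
Your proof is correct and follows essentially the same route as the paper's: realize the prescribed derivatives with the area-preserving Franks lemma of \cite{BDP}, then restore reversibility by the reflection/gluing construction of Lemma~\ref{local}. The only difference is organizational --- the paper treats $x_1,\dots,x_k$ one at a time, applying Lemma~\ref{local} iteratively with tolerance $\eta/k$, whereas you perturb simultaneously on the union of the balls; the $(R,f)$-freeness of $X$ supplies the disjointness needed in either version.
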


\begin{proof}
Given $\eta>0$, take the values of $\rho>0$ and $\zeta>0$ associated to $\frac{\eta}{k}$ by Lemma~\ref{local}, and note that each element of $X$ satisfies the hypothesis of this Lemma. Starting with $x_1$ and using Franks' Lemma for area-preserving diffeomorphisms \cite{BDP}, we perform a perturbation of $f$ supported in $B(x_1,\rho_1)$, where $0<\rho_1<\rho$ is sufficiently small, obtaining $G_1\in\text{Diff}^{~1}_{\mu}(M)$ such that $DG_{1_{x_1}}=P(x_1)$ and $G_1$ is $\zeta$-close to $f$.

Define $h_1=f^{-1}\circ G_1$. The $C^1$ diffeomorphism $h_1$ is area-preserving, equal to the Identity in $M\backslash B(x_1,\rho_1)$ and $\zeta$-$C^1$-close to the Identity. So, by Lemma~\ref{local}, there is $g_1\in\text{Diff}^{~1}_{\mu,R}(M)$ which is $\frac{\eta}{k}$-$C^1$-close to $f$, $g_1=f$ outside $C_1= B(x_1,\rho_1)\cup R(f(B(x_1,\rho_1)))$ and $g_1=f\circ h_1=G_1$ inside $B(x_1,\rho_1)$.

We proceed repeating the above argument for $x_2$ and $g_1$ just constructed, taking care to choose an open ball centered at $x_2$, with radius $0<\rho_2<\rho$, such that
$C_2=B(x_2,\rho_2)\cup R(f(B(x_2,\rho_2)))$ does not intersect $C_1$: this is a legitimate step according to the constraints $X$ has to fulfill. Applying again \cite{BDP}, we do a perturbation on $g_1$ supported in $B(x_2,\rho_2)$, which yields $G_2\in\text{Diff}^{~1}_{\mu}(M)$ such that $DG_{2_{x_2}}=P(x_2)$ and $G_2$ is $\zeta$-close to $g_1$. Therefore, the $C^1$ diffeomorphism $h_2=g_1^{-1}\circ G_2$ is area-preserving, equal to the Identity in $M\backslash B(x_2,\rho_2)$ and $\zeta$-$C^1$-close to the Identity. So, by Lemma~\ref{local}, there is $g_2\in\text{Diff}^{~1}_{\mu,R}(M)$ which is $\frac{\eta}{k}$-$C^1$-close to $g_1$, thus $\frac{2\eta}{k}$-$C^1$-close to $f$, satisfies $g_2=g_1$ outside $C_2$ and is such that $g_2=g_1\circ h_2=G_2$ inside $B(x_2,\rho_2)$.

In a similar way we do the remaining $k-2$ perturbations till we have taken into consideration all the elements of $X$. At the end of this process we obtain a diffeomorphism $g\in\text{Diff}^{~1}_{\mu,R}(M)$ which is $\eta$-$C^1$-close to $f$ and differs from $f$ only at $C=M\backslash\bigcup_{i=1}^k \, C_i$.\\

Surely, if $K$ is compact and $K\cap X=\emptyset$, then $C$ may be chosen inside the complement of $K$.

\end{proof}

\section{Smoothing out a reversible diffeomorphism}\label{Zehnder}

In this section, guided by \cite{Z}, we verify that a $C^1$ reversible diffeomorphism of the open and sense set $\mathscr{D}_1$ (see Proposition~\ref{D}) can be smoothed as a $R$-reversible $C^\infty$ diffeomorphism up to a set of arbitrarily small Lebesgue measure.

\begin{proposition}\cite{Z}\label{ZL_theorem}
Given $f\in \mathscr{D}_1$ and a pair of positive real numbers $\eta$ and $\epsilon$, there exist $g \in \text{Diff}^{~1}_{\mu, R}(M)$, which is $\eta$-$C^1$-close to $f$, and a compact $Z \subset M$ such that $\mu(M\backslash Z)<\epsilon$ and $g$ is $C^\infty$ in $Z$.
\end{proposition}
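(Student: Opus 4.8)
The plan is to mimic Zehnder's smoothing argument from \cite{Z}, but carried out equivariantly so that reversibility is not destroyed. First I would recall the shape of Zehnder's construction: given $f\in\mathscr{D}_1$ (so that the graphs of $f$ and $R$ meet transversally at finitely many points), one fixes a finite atlas of conservative charts on $M$ and, using a partition of unity, approximates $f$ in the $C^1$ topology by a map which is $C^\infty$ on a large compact set; the standard technique is to convolve the local representatives of $f$ with a smooth mollifier and then correct the result back to an area-preserving diffeomorphism via a Moser-type argument. Since a $C^0$-small correction of an area form can be realized by a $C^1$-small (indeed $C^\infty$) conservative perturbation, one obtains $g_0\in\text{Diff}^{~1}_\mu(M)$ that is $\eta/2$-$C^1$-close to $f$ and $C^\infty$ on a compact $Z_0$ with $\mu(M\setminus Z_0)<\epsilon/2$.

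Next I would make this equivariant. The involution $R$ is smooth and fixed once and for all, so it suffices to symmetrize $g_0$. Consider the set $C=M\setminus\big(\text{Fix}(R)\cup g_0^{-1}(\text{Fix}(R))\big)$; away from a neighbourhood $N$ of $\text{Fix}(R)$ one can perform the Zehnder smoothing only on, say, a fundamental domain for the $R$-action and then \emph{define} $g$ on the $R$-image by the forced formula $g:=R\circ g_0^{-1}\circ R$, exactly as in the gluing of Lemma \ref{local}. Because $R$ and $g_0$ are already smooth near $\text{Fix}(R)$ is not automatic — $\text{Fix}(R)$ is only a curve, hence of measure zero, so we may simply agree that $Z$ avoids a small tubular neighbourhood of $\text{Fix}(R)$ of measure $<\epsilon/4$ and do nothing there. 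Concretely: choose disjoint open sets $U$ and $R(U)$ with $\mu\big(M\setminus(U\cup R(U)\cup N)\big)=0$ up to the measure-zero set $\text{Fix}(R)$, smooth $g_0$ on $U$ as above to get $g=g_0$-modified on $U$, set $g=R\circ g^{-1}\circ R$ on $R(U)$, and keep $g=f$ on the tube $N$. The verification that $R\circ g=g^{-1}\circ R$ globally is then word-for-word the computation in the proof of Lemma \ref{local}, using $R^2=\mathrm{Id}$; and the $C^1$-closeness on $R(U)$ follows from the closeness on $U$ together with the bound $\|R\|_{C^1}$, again as in Lemma \ref{local}. Taking $Z$ to be the compact part of $U\cup R(U)$ where the construction succeeded gives $\mu(M\setminus Z)<\epsilon$ and $g$ smooth on $Z$.

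The main obstacle, and the reason $f\in\mathscr{D}_1$ is hypothesized, is the interaction between the support of the smoothing perturbation and the set where $g$ is forced to equal $R\circ g^{-1}\circ R$: one must arrange that the ball-by-ball mollification is supported in a region $U$ with $U\cap R(f(U))=\emptyset$, so that the two definitions of $g$ are compatible and the reversibility gluing is unambiguous. Transversality of the graphs of $f$ and $R$ guarantees that $\{x:f(x)=R(x)\}$ is finite, hence one can excise a neighbourhood of it of arbitrarily small measure and work in the complement, where $U$ and $R(f(U))$ can be separated — this is precisely the mechanism already used in Lemma \ref{local} and Proposition \ref{D}. A secondary technical point is to keep the Moser correction step equivariant; but since the discrepancy form $g_0^*\mu-\mu$ is itself $R$-anti-invariant when $g_0$ is built symmetrically, one can choose the correcting isotopy to be $R$-equivariant, so no new difficulty arises there.
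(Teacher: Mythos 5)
Your proposal follows essentially the same route as the paper: use the $\mathscr{D}_1$ hypothesis to reduce the obstruction to the finite coincidence set $\{x: f(x)=R(x)\}$, excise small balls around it (and their $R$-images) to form $M\setminus Z$, and on the complement perform Zehnder's local smoothing paired with the forced reversible counterpart exactly as in the gluing of Lemma~\ref{local}. One bookkeeping slip to fix: the set on which reversibility forces the formula $g=R\circ g^{-1}\circ R$ is $R(f(U))$, not $R(U)$, and the set to excise is the coincidence set $\{f=R\}$ rather than $\mathrm{Fix}(R)$ --- both points you in fact state correctly in your final paragraph, which is where the $U\cap R(f(U))=\emptyset$ condition belongs.
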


\begin{proof}
Assume $f$ is not $C^2$ and denote by $\mathcal{F}=\{x_i\}_{i=1}^k$ the set of elements of $M$ such that $f(x_i)=R(x_i)$. For arbitrary $\eta>0$ and $\epsilon>0$, take the open covering of $\mathcal{F}$ defined by $\bigcup_{i=1}^k B(x_i, r(\epsilon, \eta))$, denominate
$$\mathcal{B}= \bigcup_{i=1}^k B(x_i, r(\epsilon, \eta)) \cup R\left(\bigcup_{i=1}^k B(x_i, r(\epsilon, \eta)) \right)$$
and consider the compact set
$$Z=M\backslash \mathcal{B}$$
where $r(\epsilon, \eta)$ is chosen small enough to guarantee that $\mu(M\backslash Z)<\epsilon$. \\

Now select a finite open covering $\mathcal{U}_1:=\bigcup_{i=1}^\ell U_i$ such that $\mathcal{U}_1\cap \mathcal{F}=\emptyset$ and take the union $\mathcal{U}:=\mathcal{U}_1\cup R(f(\mathcal{U}_1)).$ By \cite{Z}, it is possible to smooth out the diffeomorphism $f$ in $\mathcal{U}$ by locally smoothing its generating functions associated to a selection of sympletic charts. Moreover, since we have $f(z)\neq R(z)$ for all $z \in \mathcal{U}$, we can perform a balanced perturbation, as explained in Lemma~\ref{local}, in order to ensure that the resulting diffeomorphism is $R$-reversible: each time we smooth in $U_i$, we also induce smoothness in $R(f(U_i))$.

The argument has a final recurrent step: the diffeomorphism $g$ is the limit, in the $C^{\infty}$ topology, of a sequence of $R$-reversible diffeomorphisms which are $C^{\infty}$ in $\mathcal{U}$ and $\eta$-$C^1$-close to $f$. As reversibility is a closed property, the limit $g$ is $R$-reversible too.
\end{proof}

\begin{remark}
If the previous argument is applied to $f \in \text{Diff}^{~1}_{\mu,R}(M)$ such that $f(x) \neq R(x)$ for all $x \in M$, then $Z=M$.
\end{remark}

\section{Hyperbolic sets}\label{Bowen}

It is well-known \cite{Bowen75} that \emph{basic} (non-Anosov) hyperbolic sets of $C^2$ diffeomorphisms have zero measure. In \cite{BV}, it was proved that the same result holds for compact hyperbolic sets without assuming that they are basic pieces. In what follows we will extend this property to the context of area-preserving reversible surface diffeomorphisms.

Given $f\in \mathscr{D}_1$ (see Proposition~\ref{D}) and positive real numbers $\eta$ and $\epsilon$, apply Proposition~\ref{ZL_theorem} to get a $\eta$-$C^1$-close-to-$f$ diffeomorphism $g$ in $\text{Diff}^{~1}_{\mu, R}(M)$ and a compact $Z \subset M$ such that $\mu(M\backslash Z)<\epsilon$ and $g$ is $C^\infty$ in $Z$. Adapting the the argument of \cite{BV}, we will show that the uniformly hyperbolic sets of $g$ have Lebesgue measure smaller than $\epsilon$, unless $g$ is Anosov.

\begin{proposition}\label{BV para reversiveis}
If $\Lambda$ is a compact hyperbolic set for $g$, then either $\mu(\Lambda)>0$ and $\mu(\Lambda\cap~Z)=0$, or $\mu(\Lambda)=0$, or else $\Lambda= M$.
\end{proposition}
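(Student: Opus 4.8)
The strategy is to import the dichotomy of Bochi–Viana \cite{BV} — that a compact hyperbolic set of a $C^2$ area-preserving surface diffeomorphism is either the whole manifold or has zero Lebesgue measure — and to transport it through the $C^\infty$-region $Z$ produced by Proposition~\ref{ZL_theorem}. The obstruction to a direct application is that $g$ is only $C^1$ globally; it is $C^\infty$ merely on $Z$. So I would argue locally: on $Z$ the map $g$ is smooth, hence the local stable/unstable laminations of $\Lambda$ restricted to the interior of $Z$ are $C^1$ (or at least absolutely continuous with bounded Jacobians), and the classical Bowen-type absolute-continuity/volume argument in \cite{Bowen75, BV} applies to $\Lambda \cap \mathrm{int}(Z)$. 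This shows that if $\mu(\Lambda \cap Z) > 0$, then $\Lambda$ must contain an open set, and a connectedness/invariance argument forces $\Lambda = M$.

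The steps, in order, would be: (1) Recall that a compact hyperbolic set $\Lambda$ for any diffeomorphism carries local product structure on small scales, with local stable and unstable manifolds varying continuously; if $\Lambda$ is not all of $M$, then $\Lambda$ is contained in a ``basic-like'' piece or more precisely one runs the argument of \cite{BV} which does not require $\Lambda$ to be a basic set. (2) Observe that where $g$ is $C^\infty$ — i.e. on $Z$ — the stable and unstable holonomies are absolutely continuous with uniformly bounded Jacobians (Anosov's absolute continuity theorem, valid for $C^2$, hence $C^\infty$, dynamics on that region). (3) Following \cite{BV}, deduce that $\mu(\Lambda \cap Z) > 0$ forces the unstable lamination through $\Lambda \cap Z$ to fill an open subset of $M$ up to a null set, so that $\Lambda$ has nonempty interior. (4) Since $\Lambda$ is closed, $g$-invariant, and $g$ is area-preserving on the connected manifold $M$, a set with nonempty interior that is invariant and hyperbolic must be open and closed — so $\Lambda = M$, i.e.\ $g$ is Anosov. (5) Contrapositively, if $g$ is not Anosov, then $\mu(\Lambda \cap Z) = 0$; combining with the trivial alternatives ($\mu(\Lambda) = 0$, or $\mu(\Lambda) > 0$ forced entirely by the complement of $Z$), we get exactly the stated trichotomy: either $\mu(\Lambda) > 0$ with $\mu(\Lambda \cap Z) = 0$, or $\mu(\Lambda) = 0$, or $\Lambda = M$.

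The main obstacle is step (2)–(3): making rigorous that ``$C^\infty$ on the compact set $Z$'' is enough regularity to run the Bochi–Viana absolute-continuity estimate even though $\Lambda$ may spill out of $Z$ and $g$ is only $C^1$ there. The point is that the volume/Fubini argument in \cite{BV} is genuinely local — it estimates the measure of a hyperbolic set by slicing along unstable plaques within a single foliation box — so one only needs the smoothness (hence the bounded-Jacobian holonomy) inside a box contained in $\mathrm{int}(Z)$. I would therefore restrict attention to foliation boxes $B \subset \mathrm{int}(Z)$, apply the \cite{BV} estimate verbatim there to bound $\mu(\Lambda \cap B)$, and then note that if this were positive for some box the estimate actually yields that $\Lambda \cap B$ is, up to measure zero, a union of full unstable plaques, which after saturation under $g$ (using that unstable manifolds are globally defined and grow) produces an open invariant subset of $M$. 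The remaining parts are routine: the continuity of the splitting and the compactness of $\Lambda$ give the uniform hyperbolicity constants, and the area-preservation plus connectedness of $M$ close the argument.
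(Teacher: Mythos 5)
Your overall strategy is the same as the paper's -- both run the Bochi--Viana absolute-continuity/density-point argument of \cite{BV} on the region where $g$ is smooth -- but two steps that you dismiss as routine are precisely where the real work lies, and as written they are gaps.

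First, your step (4) rests on the assertion that a compact invariant hyperbolic set with nonempty interior ``must be open and closed'', so that connectedness finishes the proof. This is not a valid general principle: a hyperbolic set with nonempty interior need not be the whole manifold unless one adds transitivity, local maximality, or (as here) area-preservation together with a genuine argument. An open invariant subset of a connected manifold is also not automatically everything. The paper's route is: Poincar\'e recurrence (this is where area-preservation enters) produces a recurrent point near the limit unstable plaque; the Shadowing Lemma then yields a periodic point $p$ whose unstable manifold, by the $\lambda$-Lemma and compactness, satisfies $\overline{W^u(p)}\subset \hat\Lambda$; finally Lemma B7 of \cite{BV} shows that $Y=\overline{W^u(p)}$ coincides with its local stable saturation $W^s_{loc}(Y)$, which is open, while $Y$ is closed -- only then does connectedness give $Y=M$. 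Your plan omits the periodic point, the $\lambda$-Lemma, and the ``open equals closed'' identification entirely.

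Second, your proposed fix for the lack of global $C^2$ regularity -- ``apply the \cite{BV} estimate verbatim inside a foliation box $B\subset \mathrm{int}(Z)$'' -- does not work as stated, because the estimate is dynamical rather than static: the bounded-distortion bound for unstable plaques is obtained by iterating $g$ along long orbit segments, and those segments leave $B$ and in general leave $Z$, where $g$ is only $C^1$. The paper confronts exactly this by working with $\hat\Lambda=\overline{\Lambda\cap Z}$ (to which hyperbolicity extends) and using Poincar\'e recurrence to extract a sequence of \emph{returns} $z_i=g^{k_i}(z_0)$ of a $\mu_u$-density point to a fixed small ball in $\hat\Lambda$, running the contraction and distortion estimates along these return segments so that the relative $\mu_u$-measure of $W^u_\rho(z_i)\setminus\hat\Lambda$ tends to zero and a full plaque $W^u_\rho(\ell)\subset\hat\Lambda$ appears in the limit. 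Some device of this kind is needed in your write-up; a single spatially localized box is not enough. The final bookkeeping of the trichotomy in your step (5) is fine once these two points are repaired.
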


\begin{proof}
Let $\Lambda$ be a compact hyperbolic set for $g$ and denote by $\hat\Lambda$ the closure $\overline{\Lambda\cap Z}$, to where hyperbolicity extends. We will prove that, if $\mu(\hat\Lambda)>0$, then $\Lambda=M$.

Recall that (see detailed information in \cite{HPS}), denoting by $d$ the induced Euclidean distance in $M$, for each $\rho >0$ and every $x \in \Lambda$, the local stable manifold of $x$ is defined as the subset
$$W^s_\rho(x)=\{y \in M: d(g^n(x), g^n(y))<\rho, \quad \forall n \in \mathbb{Z}_0^+ \} $$
and, similarly, the local unstable manifold of $x$ is
$$ W^u_\rho(x)=\{y \in M: d(g^n(x), g^n(y))<\rho, \quad  \forall n \in \mathbb{Z}_0^- \}.$$
As $g$ is $C^2$ in $\hat\Lambda$, the unstable foliation of points in $\hat\Lambda$ is absolutely continuous \cite{BP}. Let $\mu_u$ be the $u$-dimensional Lebesgue measure along the unstable one-dimensional mani\-folds of points in $\Lambda$. On account of $\mu(\hat\Lambda)>0$, there exists a density point $x_0$ of $\hat\Lambda$, that is, a point whose balls satisfy
$$\lim_{t \rightarrow 0}\,\, \frac{\mu(\hat\Lambda \cap B(x_0, t))}{\mu(B(x_0, t))}=1.$$
Therefore, if $t$ is small enough, then $\mu(\hat\Lambda \cap B(x_0, t))>0$ and we may find $y_0 \in \hat\Lambda \cap B(x_0, t)$ and $\rho>0$ such that $\mu_u (W^u_{\rho}(y_0) \cap \hat\Lambda \cap B(x_0, t))>0$. Hence there is $z_0 \in \hat\Lambda \cap B(x_0, t)$ which is a density point of $W^u_{\rho}(y_0) \cap \hat\Lambda \cap B(x_0, t)$ with respect to the measure $\mu_u$.

Since $\hat\Lambda$ is not invariant, the orbit of $z_0$ may move away from $Z$. To cope with these escapes, consider the sequence of returns of the $g$-orbit of $z_0$ to $\hat\Lambda \cap B(x_0, t)$, say $\left(z_{i}\right)_{i \in \mathbb{N}}=\left(g^{k_i}(z_0)\right)_{i \in \mathbb{N}}$, whose existence is ensured by Poincar\'e Recurrence Theorem \cite{KH}. For a fixed sufficiently small $\rho >0$, observe that
$$\lim_{i \rightarrow +\infty}\,\, \diam \left[g^{-{k_i}}(W^u_\rho(z_{i}))\right]=0 \,\,\quad \text{and}\quad\,\, \lim_{i \rightarrow +\infty} \,\,\frac{\mu_u(g^{-{k_i}}(W^u_\rho(z_{i})\backslash \hat\Lambda))}{\mu_u(g^{-{k_i}}(W^u_\rho(z_{i})))}=0,$$
where by $diam$ of a compact set $A \subset M$ we mean the maximum of the set $\{d(x,y): x,y \in A\}$. Using the bounded distortion of $C^2$ maps, we conclude that
$$\lim_{i \rightarrow +\infty}\,\, \frac{\mu_u(W^u_\rho(z_{i})\backslash \hat\Lambda)}{\mu_u(W^u_\rho(y_{i}))}=0.$$
What is more, as $0<{\mu_u(W^u_\rho(z_{i}))}<\epsilon$, we also have $\lim_{i \rightarrow +\infty} \,\,\mu_u(W^u_\rho(z_{i})\backslash \hat\Lambda)=0$. Take now a convergent subsequence of $\left(z_{i}\right)_{i \in \mathbb{N}}$ in the compact $\hat\Lambda$ and let $\ell \in \hat\Lambda$ be its limit. The disks $W^u_{\rho}(z_i)$ converge, as $i$ goes to $+\infty$, to $W^u_{\rho}(\ell)$ and therefore, by compactness of $\hat\Lambda$, we have $W^u_\rho(\ell)\subset \hat\Lambda$. Furthermore, if $U$ is an open small neighborhood of $\hat\Lambda$, $\hat\Lambda_U$ is the maximal invariant set of $g$ inside $U$ and $V_\ell$ is a closed neighborhood of $\ell$ contained in $U$, then we have $\mu(\hat\Lambda \cap V_\ell)>0$.
Since $g$ is area-preserving and $\mu(\hat\Lambda \cap V_\ell)>0$, by Poincar\'e Recurrence Theorem there exists $q \in \hat\Lambda \cap V_\ell$ and $n_0 \in \mathbb{N}$ such that $g^{n_0}(q) \in \hat\Lambda \cap V_\ell$. Applying the Shadowing Lemma \cite{KH}, we find a periodic point $p \in M$ of period $n_0$ such that, for all $j \in \{0, \ldots, n_0\}$, we have $g^j(p) \in U.$ The local invariant manifolds of $p \in M$ are close to those of $\ell$, thus $W^s_{loc}(p)$ intersects transversely $W^u_{loc}(\ell)$. By the $\lambda$-Lemma \cite{PM}, we conclude that $W^s_{loc}(\ell)$ $C^1$-accumulates at $W^u(p)$ and then, using the compactness of $\hat\Lambda$, we infer that $W^u (p)\subset \hat\Lambda$. In particular, $p \in \hat\Lambda$.

Define $Y=\overline{W^u(p)}$. The submanifolds $W^s_{loc}(Y)=\bigcup_{x\, \in\, Y} W^s_{loc}(x)$ and $W^u_{loc}(Y)=\bigcup_{x\, \in\, Y} W^u_{loc}(x)$ are open sets \cite{HPS} contained in a small neighborhood of $Y$. Moreover,

\begin{lemma}\cite[Lemma B7]{BV}
\begin{itemize}
\item[(a)] $W^u_{loc}(Y)=W^u_{loc}(x)$, for any $x \in Y$.\\

\item[(b)] $g\left(W^s_{loc}(Y)\right)= W^s_{loc}(Y)$.
\end{itemize}
\end{lemma}

\medskip

Recalling that $W^u (p)\subset \hat\Lambda$, the first property of $Y$ applied to $p \in \hat\Lambda$ implies that
$$W^s_{loc}(Y)\subset \hat\Lambda.$$
The second property informs that
$$\bigcap_{i\,\in\,\mathbb{N}}\,g^i\left(W^s_{loc}(Y)\right)=Y=W^s_{loc}(Y).$$
Yet, $W^s_{loc}(Y)$ is open and $Y$ is closed so, owing to the connectedness of $M$, we must have $Y=M$. Consequently,
$$M=Y=\overline{W^u(p)}\subset \hat\Lambda$$
and so $\hat\Lambda=M.$
\end{proof}

\medskip

\begin{remark}\label{hyp_invariant}
If $\Lambda$ is a compact hyperbolic set for $g$ such that $\mu(\Lambda)>0$ and $\mu(\Lambda\cap Z)=0$, then, as $\Lambda$ and $\mu$ are $g$-invariant,
\begin{eqnarray*}
\mu\left(\Lambda\cap (M\backslash Z)\,\cap \,\cup_{j\,\in\, \mathbb{Z}}\,g^{-j}(Z)\right) &=&\mu\left(\Lambda\cap (M\backslash Z)\,\cap \,\cup_{j\,\in\, \mathbb{Z}}\,g^{-j}(\Lambda \cap Z)\right)\\
&=& \mu\left(\cup_{j\,\in\, \mathbb{Z}}\, \Lambda\cap (M\backslash Z)\,\cap \,g^{-j}(\Lambda \cap Z)\right)\\
&\leq& \sum_{j\in \mathbb{Z}}\, \mu\left(g^{-j}(\Lambda \cap Z)\right)=0
\end{eqnarray*}
which means that the iterates by $g$ of $\mu$ almost every point in $\Lambda \cap (M\backslash Z)$ remain there.
\end{remark}

\section{Proof of Theorem A}\label{BM}

Consider $f\in \text{Diff}^{~1}_{\mu, R}(M)$. If $f$ is Anosov or its integrated Lyapunov exponent (see Section~\ref{ILExp}) is zero, the proof ends. For instance, if $f=R$, then all orbits of $f$ have zero Lyapunov exponents. Otherwise, start approaching $f$ by $f_1$ of the open and dense set $\mathscr{D}_1$. Then, given $\epsilon > 0$, by Proposition~\ref{ZL_theorem} there exist a subset $Z \subset M$, whose complement in $M$ has Lebesgue measure smaller than $\epsilon$, and a diffeomorphism $f_2 \in \mathscr{D}_1$ which is $C^1$-close to $f_1$ (thus close to $f$) and is of class $C^2$ in an open neighborhood of  $Z$. Using Corollary~\ref{E} and Corollary~\ref{D2}, we then find a diffeomorphism $F \in \mathscr{D}$ whose set of periodic points is countable (so it has null Lebesgue measure), is $C^1$ close to $f_2$ (hence close to $f$) and is still $C^2$ when restricted to $Z$. According to Proposition~\ref{BV para reversiveis} applied to $F$, any compact hyperbolic set for $F$ has zero Lebesgue measure in $Z$, so its Lebesgue measure is smaller than $\epsilon$. The following result is the final key step to finish the proof.

\begin{proposition}\label{mainP(b)}
Given $\delta>0$, there is $g\in \text{Diff}^{~1}_{\mu, R}(M)$ which is $C^1$-close to $F$ and satisfies $\mathscr{L}(g) < \epsilon + \delta$.
\end{proposition}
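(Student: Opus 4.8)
The plan is to mimic the Bochi--Ma\~n\'e argument from \cite{Bo}, but carried out inside $\text{Diff}^{~1}_{\mu,R}(M)$ and on the "good" part $Z$ where $F$ is $C^2$ and the periodic points have been made countable. Recall what has already been established: any compact hyperbolic set for $F$ meets $Z$ in a set of zero $\mu$-measure, so $\mu(M \setminus (Z \setminus \bigcup_{\Lambda \text{ hyp.}} \Lambda)) < \epsilon$ up to the hyperbolic parts, and by Remark~\ref{hyp_invariant} $\mu$-a.e.\ point of a measure-positive hyperbolic $\Lambda$ never enters $Z$ again. Combining this with Corollary~\ref{D2}, Corollary~\ref{E} and Proposition~\ref{KS2}, I would first fix a full-measure $F$-invariant set $G$ of regular, $(R,F)$-free, non-periodic points whose forward orbits return to $Z$ with positive frequency, except possibly for a set of measure $<\epsilon$ contained in hyperbolic blocks. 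On the complement of these hyperbolic blocks and away from an $\epsilon$-set I want to bring $\lambda^+(g,\cdot)$ close to zero.

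The core mechanism is the "one can lower the exponent unless there is a dominated splitting" dichotomy. Concretely, for $x$ in the good set $G$ with $\lambda^+(F,x)>0$, along a long segment $x, F(x),\dots,F^n(x)$ the Oseledets directions $E^u$ and $E^s$ exist, and if there is \emph{no} $m$-dominated splitting along that segment then, by the Ma\~n\'e/Bochi rotation trick, one can find $C^1$-small perturbations of $Df$ at finitely many points of the segment which rotate $E^u$ toward $E^s$, so that the product of the perturbed cocycle has norm growth strictly smaller than before; iterating over a measurable partition one drives the integrated exponent down below $\epsilon+\delta$. The perturbations are abstract tangent-space perturbations along a finite orbit segment, so they are exactly what Lemma~\ref{Franks3} (the reversible Franks' lemma) is designed to realize as a genuine $g \in \text{Diff}^{~1}_{\mu,R}(M)$ --- crucially, the segment must be $(R,F)$-free, which is why we passed to $\mathscr{D}$, and the points must be non-periodic, which is why we used $\mathscr{S}$/$\mathscr{E}$. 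The $C^2$ hypothesis on $Z$ enters through the absolute continuity and bounded distortion needed to pass from "most points of a segment" to "most points of $M$" in the integral estimate, as in \cite{BV, Bo}.

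The remaining case is when a dominated splitting \emph{does} persist along arbitrarily long segments through a positive-measure set of points: by the compactness/continuity properties recalled in Section~\ref{preliminaries} (dominated splittings extend to closures and have angles bounded away from $0$) and by \cite[Lemma 3.11]{Bo} (in dimension two, domination $\Leftrightarrow$ hyperbolicity), this forces a compact hyperbolic set $\Lambda$ of positive measure. By Proposition~\ref{BV para reversiveis} we then have $\Lambda = M$, i.e.\ $F$ itself is Anosov --- but that case was excluded at the start of Section~\ref{BM}. Hence the dichotomy always resolves on the side where we can perturb, and assembling the local perturbations over a finite measurable partition (using the "$g=f$ on a prescribed compact disjoint from $X$" clause of Lemma~\ref{Franks3} to keep the supports disjoint) yields the desired $g$ with $\mathscr{L}(g)<\epsilon+\delta$.

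The main obstacle I anticipate is the reversibility bookkeeping in the perturbation step: every time we rotate the cocycle at a point $x$ of the chosen segment we are forced (by the symmetry in Lemma~\ref{local}, Lemma~\ref{Franks3}) to simultaneously modify $g$ near $R(f(x))$, so the finite perturbation set $X$ together with its $R$-partner set must be kept $(R,F)$-free and pairwise disjoint from the returns to $Z$ we are exploiting; Example~\ref{noFranks} shows this is a real constraint, not a formality. Controlling that the mirrored perturbations near $R(f(X))$ do not spoil the norm estimate on the orbit of $x$ --- equivalently, that the symmetric block and the original block are dynamically independent along the relevant time window --- is where the bulk of the care goes, and it is exactly the difficulty flagged in Section~\ref{Franks2}. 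Once the segments are chosen $(R,F)$-free and long enough, however, the analytic estimates are the standard Bochi--Ma\~n\'e ones and go through verbatim.
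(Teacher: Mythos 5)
Your overall strategy is the paper's: the Bochi--Ma\~n\'e scheme (rotate $E^u$ into $E^s$ along long orbit segments lacking a dominated splitting, realize the abstract tangent perturbations via the reversible Franks lemma, then globalize), with reversibility handled by pairing every perturbation near $F^j(x)$ with a mirror perturbation near $R\,F^{j+1}(x)$. Two points, however, need correction or substantially more detail. First, your disposal of the dominated-splitting alternative misreads Proposition~\ref{BV para reversiveis}: that result is a \emph{trichotomy}, and a compact hyperbolic set $\Lambda$ with $\mu(\Lambda)>0$ need not equal $M$ --- it may instead satisfy $\mu(\Lambda\cap Z)=0$, i.e.\ be carried entirely by the bad set $M\setminus Z$ of measure $<\epsilon$ where $F$ is not $C^2$. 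This is exactly what happens to $\mathcal{H}_m=\mathscr{O}^+(F)\setminus\Omega_m(F)$ in the proof, so your claim that ``the dichotomy always resolves on the side where we can perturb'' is false; it resolves that way only off a set of measure $<\epsilon$, and that surviving hyperbolic block is precisely why the conclusion is $\mathscr{L}(g)<\epsilon+\delta$ rather than $<\delta$. (Your first paragraph allows for this loss, but your third paragraph contradicts it.)

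Second, ``assembling the local perturbations over a finite measurable partition'' glosses the step where most of the work of Section~\ref{mainproposition} actually lives. The paper globalizes through a Kakutani castle with towers of height between $\alpha$ and $3\alpha$, and reversibility imposes genuine extra structure there: the base $\mathcal{B}$ must be $R$-invariant modulo $\mu$ (Lemma~\ref{claim1}) so that the mirrored supports $R\,F^{n_i}(B(x_i,r_i))$ land back on towers of the same castle, and the Vitali covering of the base must be carried out simultaneously by the balls $B(x_i,r_i)$ and their images $R\,F^{n_i}(B(x_i,r_i))$, which requires proving these images form a regular family for $\mu$ (Lemma~\ref{elipses2}). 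Without the castle --- or an equivalent device producing pairwise disjoint orbit segments of controlled length that, together with their $R$-mirrors, cover $M$ up to measure $O(\epsilon+\delta)$ --- the local estimates $\frac{1}{n}\log\|DG^n\|<\delta$ on the sets $K_1^i$, $K_2^i$ do not integrate to the stated bound on $\mathscr{L}(g)$. The local mechanism in your proposal is right; the missing content is this reversible tower construction and the final three-piece integral estimate over $\mathcal{G}$, $\hat Z\setminus\mathcal{G}$ and $M\setminus\hat Z$.
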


Denote by $\mathscr{A}$ the $C^1$-open subset of $\text{Diff}^{~1}_{\mu, R}(M)$ of the $R$-reversible Anosov diffeomorphisms and, for any $k,n\in \mathbb{N}$, the set $$\mathscr{A}_{k,n}:=\left\{h\in\text{Diff}^{~1}_{\mu,R}(M)\colon \mathscr{L}(h)<\frac{1}{k} + \frac{1}{n}\right\}.$$
Clearly (see Section~\ref{ILExp}), the set
$$\mathscr{A}\cup \mathscr{A}_{k,n}$$
is $C^1$-open in $\text{Diff}^{~1}_{\mu, R}(M)$. After Proposition~\ref{mainP(b)}, we know that it is dense as well. Therefore, the set
$$\mathscr{A} \cup \left\{h\in\text{Diff}^{~1}_{\mu,R}(M)\colon \mathscr{L}(h)=0\right\}$$
is a countable intersection of the $C^1$ open and dense sets
$$\mathscr{A} \cup \left\{h\in\text{Diff}^{~1}_{\mu,R}(M)\colon \mathscr{L}(h)<\frac{1}{k} + \frac{1}{n}\right\}$$
and so it is residual.

\section{Proof of Proposition~\ref{mainP(b)}}\label{mainproposition}

Let $F\in \mathscr{D}$ be the diffeomorphism just constructed after fixing $f\in \text{Diff}^{~1}_{\mu, R}(M)$ and $\epsilon>0$. Recall that $F$ belongs to $\text{Diff}^{~1}_{\mu, R}(M) \backslash \mathscr{A}$, Lebesgue almost all its orbits are $(R,F)$-free, its set of periodic points has Lebesgue measure zero and any of its compact hyperbolic sets has Lebesgue measure smaller than $\epsilon$.

\subsection{Reducing locally the Lyapunov exponent}

The prior ingredient to prove Proposition~\ref{mainP(b)} is the next lemma whose statement is the reversible version of the Main Lemma in \cite{Bo}.

\begin{lemma}\label{mainlemma1}
Given $\eta,\, \delta>0$ and $\kappa\in\, ]0,1[$, there exists a measurable function $\mathcal{N}\colon M\rightarrow\mathbb{N}$ such that, for $x$ in a set $\hat{Z}$ with Lebesgue measure bigger than $1-\epsilon$ and every $n\geq \mathcal{N}(x)$, there exists $\varrho=\varrho(x,n)>0$ such that, for any ball $B(x,r)$, with $0<r<\varrho$, we may find $G\in \text{Diff}^{~1}_{\mu, R}(M)$, which is $\eta$-$C^1$-close to $F$, and compact sets $K_1\subset B(x,r)$ and $K_2\subset R\,F^{n}(K_1)\subset R\,F^{n}(B(x,r))$ satisfying:
\begin{enumerate}
\item [(a)] $F=G$ outside $\left(\bigcup_{j=0}^{n-1}\,F^j(\overline{B(x,r)})\right)\,\bigcup\,\left(\bigcup_{j=1}^{n}\,R\,F^j(\overline{B(x,r)})\right)$.\\

\item[(b)] For $j\in\{0,1,...,n-1\}$, the iterates $F^j(\overline{B(x,r)})$ and $R\,F^{j+1}(\overline{B(x,r)})$ are pairwise disjoint.\\

\item [(c)] $\mu(K_1)>\kappa\,\mu(B(x,r))$ and $\mu(K_2)>\kappa\,\mu(R\,F^{n}(B(x,r)))$.\\

\item [(d)] If $y_1\in{K}_1$ and $y_2\in{K_2}$, then $\frac{1}{n}\,\log\|DG_{y_i}^n\|<\delta\,\,$ for $i=1,2$.
\end{enumerate}
\end{lemma}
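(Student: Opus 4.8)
The plan is to adapt the construction of the Main Lemma in \cite{Bo} to the reversible setting, using the symmetric perturbation machinery of Section~\ref{Franks}. The non-reversible version produces, at a typical point $x$ of a set of large measure, a local perturbation supported on the finite tower $\bigcup_{j=0}^{n-1}F^j(\overline{B(x,r)})$ which, after $n$ iterates, has collapsed most of the mass of $B(x,r)$ onto directions along which the $n$-th derivative is small; the mechanism is the Bochi--Ma\~n\'e rotation/contraction trick, realized through Franks-type perturbations of the cocycle along the orbit segment $x,Fx,\dots,F^{n}x$. Our $F$ already lies in $\mathscr{D}\cap\mathscr{E}_1$, is $C^2$ on $Z$, has a countable set of periodic points, and (Corollary~\ref{D2}) has Lebesgue-almost every orbit $(R,F)$-free; this last fact is exactly what is needed so that the tower $\bigcup_{j=0}^{n-1}F^j(\overline{B(x,r)})$ and its mirror $\bigcup_{j=1}^{n}R\,F^j(\overline{B(x,r)})$ can be made disjoint for $r$ small, giving item (b). Set $\hat Z$ to be the intersection of the full-measure set of $(R,F)$-free non-periodic orbits with the set where Bochi's construction applies inside $Z$, minus a set of measure $<\epsilon$ coming from the escapes of $Z$ as in Remark~\ref{hyp_invariant}; this yields $\mathrm{Leb}(\hat Z)>1-\epsilon$.

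The key steps, in order, would be: (i) fix $x\in\hat Z$ non-periodic with $(R,F)$-free orbit, and invoke the $C^2$ regularity near $Z$ together with Oseledets/Bochi selection to get, for $n\geq\mathcal N(x)$ and $r<\varrho(x,n)$, a finite $(R,F)$-free set $\{x,Fx,\dots,F^{n-1}x\}$ on which Lemma~\ref{Franks3} applies; (ii) run Bochi's cocycle perturbation along this segment to obtain an abstract $\mathrm{SL}(2,\mathbb R)$-valued map $P$, $\zeta$-close to $DF$ along the orbit, whose composition over $n$ steps has norm $e^{o(n)}$ on a subset $K_1\subset B(x,r)$ of relative measure $>\kappa$; (iii) apply Lemma~\ref{Franks3} with the compact set $K$ taken to be a neighborhood of the already-smoothed complement, producing $G\in\text{Diff}^{~1}_{\mu,R}(M)$ that is $\eta$-$C^1$-close to $F$, realizes $P$ on the orbit segment, and — crucially, by the construction inside Lemma~\ref{local} — is automatically modified on the mirrored tower $R\,F^{j}(\overline{B(x,r)})$ so that reversibility is preserved, giving item (a); (iv) deduce item (d) for $y_1\in K_1$ from (ii), and for $y_2\in K_2$ from Lemma~\ref{Lema_importante}: setting $K_2:=R\,F^{n}(K_1)$, reversibility forces $\|DG^n_{R F^n y_1}\|$ to be controlled by $\|(DG^n_{y_1})^{-1}\|=\|DG^n_{y_1}\|$ (area-preservation in dimension two), so the same estimate $\frac1n\log\|DG^n_{y_i}\|<\delta$ holds; (v) item (c) for $K_2$ is then immediate since $R\,F^{n}$ preserves Lebesgue measure up to the bounded Jacobian of $R$ and $F$, which can be absorbed by shrinking $\kappa$ slightly, or one re-derives it directly from $\mu(K_1)>\kappa\,\mu(B(x,r))$ and the fact that $R\,F^n$ is a measure-preserving (indeed area-preserving, since $DR\in\mathrm{SL}(2,\mathbb R)$) diffeomorphism.

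I expect the main obstacle to be step (iii): ensuring that the two perturbations — the one Bochi's argument demands along the forward tower and the mirror one that Lemma~\ref{local} forces on $R\,F^{j}(\overline{B(x,r)})$ — do not interfere, i.e. that the forward tower and the mirror tower are genuinely disjoint over all $j\in\{0,\dots,n\}$ and that the $C^1$-size of the combined perturbation is still $\eta$. Disjointness is where the $(R,F)$-freeness of the orbit of $x$ is indispensable: it guarantees $F^j(x)\neq R\,F^{i}(x)$ for all relevant $i,j$, hence, for $r=r(x,n)$ small enough, the closed balls $F^j(\overline{B(x,r)})$ and $R\,F^{i}(\overline{B(x,r)})$ stay pairwise disjoint (this is item (b)). Controlling the $C^1$-norm is handled by feeding $\eta/k$ (with $k$ the cardinality of the orbit segment, i.e.\ roughly $n$) into Lemma~\ref{Franks3}, exactly as in its proof; one must only check that Bochi's construction already produces $P$ with $\|P-DF\|$ as small as the $\zeta$ that Lemma~\ref{Franks3} returns for that tolerance, which is part of the standard Bochi estimate and survives verbatim. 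Once disjointness and size are secured, items (a)--(d) follow by the bookkeeping above, and the reversible Main Lemma is proved.
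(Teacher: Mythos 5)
Your proposal follows essentially the same route as the paper: adapt Bochi's Main Lemma by realizing the cocycle perturbation through the reversible perturbation lemmas of Section~\ref{Franks} (so that each perturbation on the forward tower is balanced by one on the mirrored tower $R\,F^{j}(\overline{B(x,r)})$), use $(R,F)$-freeness of the orbit to keep the two towers disjoint for small $r$, absorb the non-smooth/hyperbolic part into the $\epsilon$-loss via Proposition~\ref{BV para reversiveis}, and deduce the estimate for $y_2\in K_2$ from the one for $y_1\in K_1$ using reversibility together with $\|A\|=\|A^{-1}\|$ for $A\in \SL(2,\mathbb{R})$. This matches the paper's argument point for point, so no further comparison is needed.
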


\medskip

Although the proof of this lemma follows closely the argument of \cite{Bo}, it is worth re\-gistering the fundamental differences between the previous result and \cite[Main Lemma]{Bo}. Firstly, each time we perturb the map $F$ around $F^{j}(x)$, for $j\in\{0,...,n-1\}$, we must ba\-lance with a perturbation around $R\,F^{j+1}(x)$ to prevent the perturbed diffeomorphism's exit from $\text{Diff}^{~1}_{\mu, R}(M)$. Thus the perturbations in $\bigcup_{j=0}^{n-1}\,F^j(\overline{B(x,r)})$ spread to a deformation of $F$ in $\bigcup_{j=1}^{n}\,R\, F^j(\overline{B(x,r)})$. This is possible because $F\in \mathscr{D}$, but our choice of $\varrho$ must be more judicious and, in general, smaller than the one in \cite{Bo} to avoid inconvenient intersections. Secondly, we need an additional control on the function $\mathcal{N}$ and on $\mu(K_2)$ to localize the computation of the Lyapunov exponents along the orbits of elements of $K_2$.

Aside from this, we also have a loss in measure. As $F$ is not globally $C^2$, instead of a function $\mathcal{N}\colon M\rightarrow\mathbb{N}$ with nice properties on a full measure set, during the proof \cite{Bo} we have to take out a compact hyperbolic component with, perhaps, positive measure. Fortunately, that portion has measure smaller than $\epsilon$, though its effect shows up in several computations and cannot be crossed off the final expression of the integrated Lyapunov exponent.

Regardless of these hindrances, reversibility also relieves our task here and there. For instance, the inequality for $y_2 \in K_2$ in the previous lemma, that is, $\|Dg_{y_2}^n\|<e^{n\delta}$, follows from the corresponding one for $y_1$ due to the reversibility and the fact that $\|A\|=\|A^{-1}\|$ for any $A\in \SL(2,\mathbb{R})$. Indeed, given $y_2\in K_2$, there exists $y_1\in K_1$ such that $y_2=R(F^{n}(y_1))=F^{-n}(R(y_1))$. Then (see Lemma~\ref{local})
$$\|DG_{y_2}^{n}\|=\|D(R\,G^{-n}\,R)_(y_2)\|\leq \|DG_{R(y_2)}^{-n}\|=\|DG_{y_1}^{n}\|<e^{n\delta}.$$

In what follows we will check where differences start emerging and summarize the essential lemmas where reversibility steps in.

\subsubsection{Sending $E^u$ to $E^s$}

\begin{definition}\cite[\S 3.1]{Bo}\label{rs}
Given $\eta>0$, $\kappa\in\, ]0,1[$, $n\in\mathbb{N}$ and $x\in M$, a finite family of linear maps $L_{j}:T_{F^{j}(x)}M\rightarrow{T_{F^{j+1}(x)}}M$, for $j=0,...,n-1$, is an $(\eta,\kappa)$-realizable sequence of length $n$ at $x$ if, for all $\gamma>0$, there is $\rho>0$ such that, for $j\in\{0,1,...,n-1\}$, the iterates $F^j(B(x,\rho))$ and $R(F^j(B(x,\rho)))$ are pairwise disjoint and, for any open non-empty set $U\subseteq{B(x,\rho)}$, there exist
\begin{enumerate}
\item [(a)] a measurable set $K_1\subseteq{U}$ such that $\mu(K_1)>\kappa\,\mu(U)$
\item [(b)] $h\in  \text{Diff}^{~1}_{\mu, R}(M)$, $\eta$-$C^{1}$-close to $F$ satisfying:
\begin{enumerate}
\item [(b.1)] $F=h$ outside $\left(\bigcup_{j=0}^{n-1}\, F^j(\overline{U})\right)\bigcup\left(\bigcup_{j=1}^{n}\,R(F^j(\overline{U}))\right)$
\item[(b.2)] if $y_1 \in K_1$, then $\|Dh_{h^{j}(y_1)}-L_{j}\|<\gamma\,\,$ for $j=0,1,...,n-1$.
\end{enumerate}
\end{enumerate}
\end{definition}

\medskip

Notice that, if the orbit of $x$ is $(R,F)$-free and not periodic (or periodic but with period greater than $n$) and we define $K_2:=R(F^n(K_1))$ and, for $j\in\{0,1,...,n-1\}$, the sequence
$$\begin{array}{cccc}
\tilde L_{j}\colon & T_{R(F^{n-j}(x))}M & \longrightarrow & {T_{R(F^{n-j-1}(x))}}M \\
& v & \longmapsto & DR_{F^{n-j-1}(x)} L_{n-j-1}^{-1} DR_{R(F^{n-j}(x))}(v)
\end{array}$$
then we obtain, for $y_2\in{K_2}$ and $j=0,1,...,n-1$, the inequality $\|Dh_{h^{j}(y_2)}-\tilde L_{j}\|<\gamma$.

\medskip

The following lemma is an elementary tool to interchange bundles using rotations of the Oseledets directions, and thereby construct realizable sequences. If $x \in M$ and $\theta \in \RR$, consider a local chart at $x$, $\varphi_x: V_x \rightarrow \RR^2$ and the maps ${D\varphi_x^{-1}}  \mathfrak{R}_\theta D\varphi_x: \RR^2 \rightarrow \RR^2$, where $\mathfrak{R}_\theta$ is the standard rotation of angle $\theta$ at $\varphi_x(x)$. Denote by $\mathcal{Y}$ the generic set given by Corollary~\ref{D2} and Corollary~\ref{E}, whose points have $(R,F)$-free and non-periodic orbits.

\begin{lemma}\cite[Lemma 3.3]{Bo}\label{basic}
Given $\eta>0$ and $\kappa\in\, ]0,1[$, there is $\theta_0 >0$ such that, if $x\in \mathcal{Y}$ and $|\theta|<\theta_0$, then $\{DF_x \mathfrak{R}_\theta\}$ and $\{\mathfrak{R}_\theta DF_x\}$ are $(\eta,\kappa)$-realizable sequence of length $1$ at $x$.
\end{lemma}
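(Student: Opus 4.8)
The plan is to realize the prescribed small rotation by inserting near $x$ an elementary area-preserving \emph{twist} --- a rotation by an angle depending only on the radius, equal to $\theta$ on an inner disc and to $0$ outside a slightly larger concentric one --- composing it (before or after) with $F$, and then repairing the reversibility of the outcome by means of Lemma~\ref{local}. Before anything else I would fix a finite atlas of area-preserving charts of $M$ with a common distortion constant (this is what makes $\theta_0$ uniform in $x$), and read off from Lemma~\ref{local}, applied to $F$ and $\eta$, the constants $\rho_0,\zeta_0>0$, which do not depend on the base point. I would then put $\epsilon_0:=\tfrac{1}{2}(1-\sqrt\kappa)$, so that $(1-\epsilon_0)^2>\kappa$, and choose $\theta_0>0$ small enough that every such twist whose transition collar has relative width $\epsilon_0$ is, once read in any of the fixed charts, $\zeta_0$-$C^1$-close to the identity --- which is legitimate because the $C^1$-size of such a twist is of order $|\theta|/\epsilon_0$ times the (uniform) distortion constant --- and small enough that the twists stay inside the chart domains. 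This $\theta_0$ depends only on $\eta$ and $\kappa$.

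Now take $x\in\mathcal{Y}$ and $|\theta|<\theta_0$. By the very definition of $(R,F)$-freeness (applied to $x$ and to its $F$-preimage) one has $F(x)\neq R(x)$ and $x\notin Fix(R)$, so $x$ is admissible for Lemma~\ref{local}. To check Definition~\ref{rs}, fix $\gamma>0$. Using the uniform continuity of $DF$ and the smoothness of the chart $\varphi_x$ at $x$, I would pick $\rho\in\,]0,\rho_0[$ small enough that: (i) $\overline{B(x,\rho)}$, $R(\overline{B(x,\rho)})$, $F(\overline{B(x,\rho)})$ and $R(F(\overline{B(x,\rho)}))$ are pairwise disjoint; and (ii) for all $y,z\in B(x,\rho)$, the composition of $DF_z$ with the derivative at $y$ of the chart rotation $\varphi_x^{-1}\circ\mathfrak R_\theta\circ\varphi_x$ lies within $\gamma$ of $DF_x\mathfrak R_\theta$. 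Item (ii) is admissible since, as $\rho\to 0$, both $y$ and $z$ stay $O(\rho)$-close to $x$ while $DF$ and $\varphi_x$ vary continuously, so this difference tends to $0$.

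Given then an open non-empty $U\subseteq B(x,\rho)$, I would work in the chart $\varphi_x$ --- which preserves $\mu$ and hence preserves measure ratios exactly --- and use a Besicovitch/Vitali covering to select finitely many pairwise disjoint coordinate balls $B_1,\dots,B_N\subseteq U$, centered at points $x_i$ that are non-periodic and satisfy $F(x_i)\neq R(x_i)$ (a co-countable condition, since $F$ has countably many periodic points and $\{z:F(z)=R(z)\}$ is finite, so the centers can be so chosen), with $\mu(\bigcup_i B_i)>(1-\delta)\mu(U)$ for a $\delta>0$ so small that $(1-\epsilon_0)^2(1-\delta)>\kappa$. In each $B_i$ I place the twist $\psi_i$ of inner radius $(1-\epsilon_0)s_i$; then $\psi:=\psi_1\circ\cdots\circ\psi_N$ is a single $\mu$-preserving $C^1$ diffeomorphism, $\zeta_0$-$C^1$-close to the identity, supported in $\overline U$, whose derivative on $K_1:=\bigcup_i\widetilde B_i$ (the union of the inner discs) equals the derivative of the chart rotation, and with $\mu(K_1)=(1-\epsilon_0)^2\mu(\bigcup_i B_i)>\kappa\,\mu(U)$. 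Feeding $F$, the point $x$ and the perturbation $\psi$ to Lemma~\ref{local} yields $h\in\text{Diff}^{~1}_{\mu,R}(M)$ that is $\eta$-$C^1$-close to $F$ with $h=F\circ\psi$ on $B(x,\rho)$; since $\psi=\mathrm{Id}$ off $\overline U$ and $F$ is $R$-reversible, $h$ agrees with $F$ outside $\overline U\cup R(F(\overline U))$, as required by Definition~\ref{rs}(b.1). For $y\in K_1$ one has $\psi(y)\in K_1\subseteq B(x,\rho)$ and $Dh_y=DF_{\psi(y)}\circ D\psi_y$, and since $D\psi_y$ is the chart-rotation derivative at $y$, item (ii) (with $z=\psi(y)$) gives $\|Dh_y-DF_x\mathfrak R_\theta\|<\gamma$: this is exactly (b.2) for the length-$1$ sequence $\{DF_x\mathfrak R_\theta\}$. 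The sequence $\{\mathfrak R_\theta DF_x\}$ is handled identically, using in place of $\psi$ the $F$-conjugate $F^{-1}\circ\Psi\circ F$ of a twist $\Psi$ supported around $F(x)$ inside $F(U)$ --- still $\mu$-preserving (as $F$ preserves $\mu$), still supported in $\overline U$, and producing $D\big(F\circ(F^{-1}\Psi F)\big)_x=DF_x\,(DF_x)^{-1}\mathfrak R_\theta\,DF_x=\mathfrak R_\theta DF_x$.

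The crux is not any isolated estimate but the need to meet three requirements at once: the perturbation must be area-preserving, it must be upgradable to a reversible one (which is precisely the role of Lemma~\ref{local}, and the reason Franks' lemma \cite{BDP} cannot be invoked directly here), and its derivative must equal the prescribed linear map on a subset of relative measure $>\kappa$ of an \emph{arbitrary} open $U$. The elementary twist disposes of the first two demands simultaneously; the Besicovitch covering, together with the exactness of measure ratios under the area-preserving charts and the collar bound $(1-\epsilon_0)^2>\kappa$, disposes of the third. The point demanding foresight is the uniformity of $\theta_0$ over $x\in\mathcal{Y}$, which is why the finite atlas with a single distortion constant has to be fixed at the very start.
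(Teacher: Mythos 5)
Your proof is correct and follows the intended route: the paper gives no proof of this lemma, citing \cite[Lemma 3.3]{Bo} directly, and the only genuinely new ingredient required in the reversible setting is exactly the repair step you perform with Lemma~\ref{local} (the elementary area-preserving twist, the collar bound $(1-\epsilon_0)^2>\kappa$, and the Vitali covering being Bochi's original argument). Your observation that $(R,F)$-freeness of the orbit of $x\in\mathcal{Y}$ yields both $F(x)\neq R(x)$ and $x\notin Fix(R)$ is precisely what makes Lemma~\ref{local} applicable and what guarantees the disjointness required in Definition~\ref{rs}.
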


Now, the next result enables us to construct realizable sequences with a purpose: to send expanding Oseledets directions into contracting ones. This will be done at a region of $M$ without uniform hyperbolicity because there the Oseledets directions can be blended. More precisely, for $x \in \mathscr{O}^+(F)$ and $m\in \mathbb{N}$, let
$$\Delta_m (F,x)= \frac{\|DF_x^m|_{E^s(x)}\|}{\|DF_x^m|_{E^u(x)}\|}$$
and
$$\Gamma_m(F)=\left\{x \in \mathscr{O}^+(F)\cap \mathcal{Y}:\Delta_m (F,x)\geq \frac{1}{2}\right\}.$$

\medskip

\begin{lemma}\cite[Lemma 3.8]{Bo}\label{rotation}
Take $\eta>0$ and $\kappa\in\,]0,1[$. There is $m\in\mathbb{N}$ such that, for every $x\in\Gamma_m(F)$, there exists an $(\eta,\kappa)$-realizable sequence $\{L_0,L_1,...,L_{m-1}\}$ at $x$ with length $m$ satisfying
$$L_{m-1} (\ldots) L_1 L_0(E^u_x)=E^s_{F^m(x)}$$
and, consequently,
$$\tilde{L}_{m-1} (\ldots) \tilde{L}_1 \tilde{L}_0(E^u_{R(F^{m}(x))})=E^s_{R(x)}.$$
\end{lemma}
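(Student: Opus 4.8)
The plan is to follow the proof of \cite[Lemma 3.8]{Bo}, adding only the bookkeeping needed to stay inside $\text{Diff}^{~1}_{\mu,R}(M)$ (which Definition~\ref{rs} has already absorbed) and to track the companion sequence $\{\tilde L_{j}\}$. First, fix $\eta>0$ and $\kappa\in\,]0,1[$ and let $\theta_0>0$ be the angle given by Lemma~\ref{basic}, so that for $x\in\mathcal{Y}$ and $|\theta|<\theta_0$ both $\{DF_{x}\,\mathfrak{R}_{\theta}\}$ and $\{\mathfrak{R}_{\theta}\,DF_{x}\}$ are $(\eta,\kappa)$-realizable of length $1$ at $x$. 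I would then invoke the concatenation property of realizable sequences from \cite{Bo} --- the juxtaposition of an $(\eta,\kappa)$-realizable sequence of length $k$ at $x$ with one of length $\ell$ at $F^{k}(x)$ is $(\eta,\kappa)$-realizable of length $k+\ell$ at $x$ --- which persists in the reversible category precisely because, for $x\in\mathcal{Y}$, the points $F^{j}(x)$ and $R(F^{j}(x))$ are mutually distinct, so the balanced perturbation supports $F^{j}(\overline U)\cup R(F^{j}(\overline U))$ of the two pieces can be kept pairwise disjoint by shrinking the radius. Consequently, for any angles $\theta_{0},\dots,\theta_{m-1}$ with $|\theta_{j}|<\theta_0$, the maps $L_{j}:=DF_{F^{j}(x)}\,\mathfrak{R}_{\theta_{j}}$, $j=0,\dots,m-1$, form an $(\eta,\kappa)$-realizable sequence of length $m$ at any $x\in\mathcal{Y}$.

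The core of the argument is to pick $m$ (depending only on $\theta_0$) and, for each $x\in\Gamma_{m}(F)$, angles $\theta_{j}$ with $|\theta_{j}|<\theta_0$ so that $L_{m-1}\cdots L_{0}(E^{u}_{x})=E^{s}_{F^{m}(x)}$. On the projective line of each tangent plane, $DF^{m}_{x}$ carries $E^{u}_{x}$ to $E^{u}_{F^{m}(x)}$ and $E^{s}_{x}$ to $E^{s}_{F^{m}(x)}$, and --- reading both fibres in their Oseledets frames --- it is a M\"obius map whose derivative at the $E^{u}$-direction has modulus $\Delta_{m}(F,x)$. The inequality $\Delta_{m}(F,x)\geq\frac12$ defining $\Gamma_{m}(F)$ --- that is, the failure of $m$-domination at $x$ --- thus bounds below, uniformly, the rate at which the unperturbed orbit pulls nearby directions back onto $E^{u}$ over a window of length $m$. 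Feeding in the rotations $\mathfrak{R}_{\theta_{j}}$ and letting the $m$-tuple $(\theta_{0},\dots,\theta_{m-1})$ vary continuously, the image of $E^{u}_{x}$ under $L_{m-1}\cdots L_{0}$ moves continuously in $\mathbb{P}^{1}$ and, by an intermediate value argument exactly as in \cite{Bo}, can be driven from $E^{u}_{F^{m}(x)}$ onto $E^{s}_{F^{m}(x)}$ as soon as the total angular budget $m\,\theta_0$ exceeds a universal constant $C$ --- one absorbing the diameter of $\mathbb{P}^{1}$, the bound just mentioned, and the bounded distortion coming from the local charts and from the non-orthonormality of the Oseledets frame. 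It therefore suffices to fix once and for all any $m\in\mathbb{N}$ with $m\,\theta_0>C$; with this $m$, every $x\in\Gamma_{m}(F)\subseteq\mathcal{Y}$ admits the desired realizable sequence.

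The companion statement is then formal. Because $x\in\mathcal{Y}$ has an $(R,F)$-free non-periodic orbit, the observation recorded right after Definition~\ref{rs} shows that the diffeomorphism realizing $\{L_{0},\dots,L_{m-1}\}$ on $K_{1}$ simultaneously realizes $\{\tilde L_{0},\dots,\tilde L_{m-1}\}$ on $K_{2}=R(F^{m}(K_{1}))$, so the latter is $(\eta,\kappa)$-realizable at $R(F^{m}(x))$. Telescoping the factors $DR$ in the definition of the $\tilde L_{j}$ and using $DR_{R(z)}\,DR_{z}=\mathrm{Id}$ gives $\tilde L_{m-1}\cdots\tilde L_{0}=DR_{x}\circ(L_{m-1}\cdots L_{0})^{-1}\circ DR_{R(F^{m}(x))}$. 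Applying this to $E^{u}_{R(F^{m}(x))}$, and using Lemma~\ref{Lema_importante}(b) --- which gives $DR_{R(F^{m}(x))}(E^{u}_{R(F^{m}(x))})=E^{s}_{F^{m}(x)}$ and $DR_{x}(E^{u}_{x})=E^{s}_{R(x)}$ --- together with $L_{m-1}\cdots L_{0}(E^{u}_{x})=E^{s}_{F^{m}(x)}$, yields $\tilde L_{m-1}\cdots\tilde L_{0}(E^{u}_{R(F^{m}(x))})=E^{s}_{R(x)}$, as asserted.

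The main obstacle is the uniform quantitative step in the second paragraph: that a number of rotations depending only on $\theta_0$, hence one $m$ valid for all of $\Gamma_{m}(F)$, suffices to transport $E^{u}$ onto $E^{s}$. This is exactly where the absence of $m$-domination, $\Delta_{m}(F,x)\geq\frac12$, is indispensable, as it is what prevents the unperturbed cocycle from reabsorbing the accumulated rotations into $E^{u}$; handling in addition the chart-induced distortion and the merely measurable dependence of the Oseledets splitting on the base point requires the same care as in \cite{Bo}.
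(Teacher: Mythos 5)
Your proposal is correct and follows essentially the same route as the paper, which itself gives no proof beyond citing \cite[Lemma 3.8]{Bo} for the first identity and relying on the $\tilde L_j$ construction recorded after Definition~\ref{rs} for the second; your telescoping of the $DR$ factors via $DR_{R(z)}DR_z=\mathrm{Id}$ together with Lemma~\ref{Lema_importante}(b) is exactly the intended (and correct) justification of the ``consequently'' clause. The only bookkeeping slip is that in \cite{Bo} concatenation multiplies the measure constants, so the length-one blocks should be taken $(\eta,\kappa^{1/m})$-realizable to end with an $(\eta,\kappa)$-realizable sequence of length $m$; this is harmless since $\kappa^{1/m}>\kappa$.
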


\medskip

The coming step is to verify that the above construction may be done in such a way that the composition of realizable sequences has small norm. Consider the $F$-invariant set
$$\Omega_m(F)=\bigcup_{n\, \in\, \mathbb{Z}}\, F^n(\Gamma_m(F)).$$
Then $\mathcal{H}_m=\mathscr{O}^+(F)-\Omega_m(F)$ is empty or its closure is a compact hyperbolic set \cite[Lemma 3.11]{Bo}. According to Proposition~\ref{BV para reversiveis}, $\mu(\mathcal{H}_m)<\epsilon$. Hence,

\begin{lemma}\cite[Lemma 3.13]{Bo}
Consider $\eta>0$, $\kappa\in\,]0,1[$ and $\delta>0$. There exists a measurable function $\mathcal{N}\colon M\rightarrow\mathbb{N}$ such that, for $x$ in a subset with Lebesgue measure greater that $1-\epsilon$ and all $n\geq N(x)$, we may find a $(\eta,\kappa)$-realizable sequence $\{L_j\}_{j=0}^{n-1}$ of length $n$ such that
$$\|L_{n-1}(\ldots)  L_0\|<e^{\frac{4}{5}\,n\,\delta}.$$
\end{lemma}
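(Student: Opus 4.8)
The plan is to follow the scheme of \cite[Lemma 3.13]{Bo}, adapted to the reversible, only-partially-$C^2$ setting. First I would fix the scale $m$: applying Lemma~\ref{rotation} with parameters $\frac\eta2$ and, say, $\sqrt\kappa$ in place of $\eta,\kappa$ — chosen so that finite concatenations of the resulting sequences remain $(\eta,\kappa)$-realizable, as in the realizability calculus of \cite[\S3]{Bo} — I get $m\in\mathbb N$ such that at every $x\in\Gamma_m(F)$ there is a $(\frac\eta2,\sqrt\kappa)$-realizable sequence of length $m$ carrying $E^u$ onto $E^s$, together with its reversible shadow $\tilde L_\bullet$ along the mirror orbit $R(F^\bullet(x))$. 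Then, up to a $\mu$-null set, I decompose $M=\mathscr O^0(F)\,\sqcup\,\Omega_m(F)\,\sqcup\,\mathcal H_m(F)$ with $\mathcal H_m(F)=\mathscr O^+(F)-\Omega_m(F)$. Its closure is a compact hyperbolic set for $F$ by \cite[Lemma 3.11]{Bo}; since $F\notin\mathscr A$ and $F$ is $C^2$ on $Z$ with $\mu(M\backslash Z)<\epsilon$, Proposition~\ref{BV para reversiveis} applied to $F$ excludes $\overline{\mathcal H_m(F)}=M$ and forces $\mu(\mathcal H_m(F))<\epsilon$. Hence $\hat Z:=\bigl(\mathscr O^0(F)\cup\Omega_m(F)\bigr)\cap\mathcal Y$ is $F$-invariant with $\mu(\hat Z)>1-\epsilon$, and $\mathcal N$ will be built on $\hat Z$.

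On $\mathscr O^0(F)\cap\mathcal Y$ there is nothing to rotate: I take the trivial realizable sequence $L_j=DF_{F^j(x)}$, so $L_{n-1}\cdots L_0=DF^n_x$, and since $x$ is regular with $\lambda^+(F,x)=0$ one has $\frac1n\log\|DF^n_x\|\to0$, so I set $\mathcal N(x)$ to be the least integer from which $\frac1n\log\|DF^n_x\|<\frac45\delta$. On $\Omega_m(F)$ I argue as in \cite[Lemma 3.13]{Bo}: by the ergodic decomposition of $\mu$ restricted to the invariant set $\Omega_m(F)$, each ergodic component charges $\Gamma_m(F)$ — otherwise, by invariance, it would give zero mass to $\bigcup_{n\in\mathbb Z}F^n(\Gamma_m(F))=\Omega_m(F)$, absurd — so by Birkhoff's theorem the forward orbit of $\mu$-a.e.\ $x\in\Omega_m(F)$ meets $\Gamma_m(F)$ with a positive frequency $c(x)>0$, and I let $\mathcal N(x)$ be large enough that every window $[0,n)$ with $n\geq\mathcal N(x)$ contains a suitably spaced family of such returns $t_1<t_2<\cdots$. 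Along such a window I glue the unperturbed blocks $\{DF_{F^j(x)}\}$ with, at each $t_i$, the length-$m$ rotating block of Lemma~\ref{rotation}. Because $F^{t_i}(x)\in\Gamma_m(F)$, the expansion of $DF^m$ across each block is controlled by $\Delta_m(F,\cdot)\geq\frac12$ together with area preservation (up to the sub-exponential angle distortion recorded in~(\ref{angle})), whereas each rotation re-collapses the direction that the free stretches drive towards $E^u$; repeating the bookkeeping of \cite[Lemma 3.13]{Bo} gives $\|L_{n-1}\cdots L_0\|<e^{\frac45 n\delta}$. Reversibility adds no cost: the shadow blocks $\tilde L_\bullet$ inherit the estimate from $\|A\|=\|A^{-1}\|$ on $\SL(2,\mathbb R)$, and since $F\in\mathscr D$ the perturbation supports around $F^j(x)$ and around $R\,F^{j+1}(x)$ can be chosen pairwise disjoint, so the $R$-reversible diffeomorphism realizing the sequence exists. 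Measurability of $\mathcal N$ follows since on each piece it is defined from Oseledets data and Birkhoff averages.

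The main obstacle is exactly this quantitative estimate on $\Omega_m(F)$: one must choose $m$ and the return times so that finitely many length-$m$ rotations, interleaved with purely $F$-driven stretches, genuinely pull the operator norm of the composed cocycle down to $e^{\frac45 n\delta}$ for all large $n$, while keeping the total perturbation $\eta$-$C^1$-small and, crucially, supported on pairwise disjoint sets together with their $R$-mirrors so that the reversible structure survives. This is the core of \cite[Lemma 3.13]{Bo}; the new points here are merely the systematic doubling of each perturbation by its $R$-image (absorbed by halving $\eta$ and passing to $\sqrt\kappa$ at the outset) and the replacement of ``$\mu$-full measure'' by ``measure $>1-\epsilon$'', the discarded part being the hyperbolic piece $\mathcal H_m(F)$ whose measure resurfaces as the $\epsilon$ in Proposition~\ref{mainP(b)}.
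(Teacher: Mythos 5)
Your proposal matches the paper's treatment: the paper likewise defers the quantitative core to \cite[Lemma 3.13]{Bo}, and its only stated modifications are exactly the ones you identify --- the decomposition via $\Omega_m(F)$ and $\mathcal{H}_m$, the bound $\mu(\mathcal{H}_m)<\epsilon$ from Proposition~\ref{BV para reversiveis} (which accounts for replacing full measure by measure $>1-\epsilon$), and the reversible doubling of each perturbation by its $R$-mirror, made possible by $F\in\mathscr{D}$ and absorbed into the choice of constants. No discrepancy to report.
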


\medskip

If $\gamma$ is chosen small enough in the Definition~\ref{rs}, Lemma~\ref{mainlemma1} is a direct consequence of the preceding one.

\subsection{Reducing globally the Lyapunov exponent}

After Lemma~\ref{mainlemma1} we know how to find large values of $n$ such that, for some perturbation $G\in\text{Diff}^{~1}_{\mu, R}(M)$ of $F$, we get $\|DG^n_x\|<e^{n\delta}$ for a considerable amount of points $x$ inside a small ball and its image by $RF$. However, the Lyapunov exponent is an asymptotic concept and we need to evaluate, or find a good approximation of it on a set with full $\mu$ measure. In this section we will extend the local procedure to an almost global perturbation, which allows us to draw later on global conclusions. The classic ergodic theoretical construction of a Kakutani castle \cite{Kakutani} is the bridge between these two approaches, as was discovered in \cite[\S 4]{Bo}. The main novelty here is that, when building some tower of the castle, we simultaneously built its mirror inverted reversible copy.

\subsubsection{A reversible Kakutani castle}

Let $A\subseteq M$ be a borelian subset of $M$ with positive Lebesgue measure and $n\in \mathbb{N}$. The union of the mutually disjoint subsets $\bigcup_{i=0}^{n-1} F^i(A)$ is called a \emph{tower}, $n$ its \emph{height} and $A$ its \emph{base}. The union of pairwise disjoint towers is called a \emph{castle}. The \emph{base of the castle} is the union of the bases of its towers. The first return map to $A$, say $\tau: A \rightarrow \mathbb{N}\cup \{\infty\}$, is defined as $\tau(x)=\inf \{ n\in \mathbb{N}: F^n(x) \in A\}$. Since $\mu(A)>0$ and $F$ is measure-preserving, by Poincar\'e recurrence theorem the orbit of Lebesgue-almost all points in $A$ will come back to $A$. Thus, $\tau(x) \in \mathbb{N}$ for Lesbesgue almost every $x \in A$. If $A_n=\{x \in A: \tau(x) =n\}$, then $\mathcal{T}_n= A_n \cup F(A_n) \cup \ldots \cup F^{n-1}(A_n)$ is a tower, $\bigcup_{n\,\in\, \mathbb{Z}} F^n(A)$ is $F$-invariant and it is the union of the towers $\mathcal{T}_n$: it is a castle with base $A$. Moreover,

\begin{lemma}\cite[pp. 70 and 71]{Halmos} \label{Auxiliar2} For every borelian $U$ such that $\mu(U)>0$ and every $n\in\mathbb{N}$, there exists a positive measure set $V\subset U$ such that $V$, $F(V), \ldots, F^{n}(V)$ are pairwise disjoint. Besides, $V$ can be chosen in such a way that no set that includes $V$ and has larger Lebesgue measure than $V$ has this property.
\end{lemma}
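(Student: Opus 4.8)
The plan is to realise $V$ as the base of a Rokhlin tower adapted to $U$, and then to upgrade it to a \emph{maximal} such base by an exhaustion argument. Throughout, say that a measurable $W\subseteq M$ is \emph{$n$-separated} if $W\cap F^k(W)=\emptyset$ for $k=1,\dots,n$; this is exactly the condition that $W,F(W),\dots,F^{n}(W)$ be pairwise disjoint. Recall that $F$ (being in $\mathscr{D}$ in our setting, with periodic point set of zero measure) is aperiodic, and that by Poincar\'e recurrence $\mu$-almost every point of $U$ returns to $U$. The first goal is to produce \emph{some} $n$-separated set $V_0\subseteq U$ with $\mu(V_0)>0$.

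For this I would pass to the first-return map $F_U\colon U\to U$, which is defined $\mu$-a.e., preserves $\mu|_U$, and is aperiodic. Applying the Rokhlin--Kakutani lemma (see \cite{KH,Halmos}) to $F_U$ with a tower of height $n+1$ produces $V_0\subseteq U$ with $V_0,F_U(V_0),\dots,F_U^{\,n}(V_0)$ pairwise disjoint and $\mu(V_0)>0$. The key observation is that $V_0$ is then automatically $n$-separated \emph{for $F$}: each return of a point to $U$ consumes at least one iterate of $F$, so if $x\in V_0$ and $F^k(x)\in V_0\subseteq U$ with $1\le k\le n$, then $F^k(x)=F_U^{\,j}(x)$ for some $1\le j\le k\le n$, contradicting the disjointness of the $F_U$-tower. (Alternatively one may apply the ordinary Rokhlin lemma directly to $F$ with a very tall tower and intersect an interior level with $U$; all levels have equal measure, so for the tower tall enough the interior levels cover all but a small part of $U$ and at least one interior level meets $U$ in positive measure, and that intersection is $n$-separated because the relevant iterates land in distinct levels.)

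For the maximality clause I would work inside the family $\mathcal{S}$ of $n$-separated measurable subsets of $U$. This family is non-empty by the previous paragraph, its members have measure at most $\mu(U)<\infty$, and it is stable under nested countable unions: if $W_1\subseteq W_2\subseteq\cdots$ are $n$-separated and $x,F^k(x)\in\bigcup_j W_j$ with $1\le k\le n$, then both already lie in a single $W_j$. Starting from $V_0$, I would iteratively absorb ``compatible excess'': given $V_j\in\mathcal{S}$, choose $W\in\mathcal{S}$ whose measure outside $V_j\cup\bigcup_{1\le|k|\le n}F^k(V_j)$ is at least half the supremum of such quantities, and set $V_{j+1}=V_j\cup\bigl(W\setminus\bigcup_{1\le|k|\le n}F^k(V_j)\bigr)$; a short check shows $V_{j+1}\in\mathcal{S}$ and $V_j\subseteq V_{j+1}$. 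Either the process terminates, in which case the current set is already maximal, or it yields $V:=\bigcup_j V_j\in\mathcal{S}$; since the measures increase and are bounded, the compatible excesses tend to $0$, and using that $W\mapsto\mu(F^k(W))$ passes to the limit along the increasing sequence one concludes that every $n$-separated $V'\supseteq V$ satisfies $\mu(V')=\mu(V)$ (here one uses that $V'\cap F^k(V_\infty)\subseteq V'\cap F^k(V')=\emptyset$). Since $\mu(V)\ge\mu(V_0)>0$, this $V$ has all the required properties.

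The conceptual content is concentrated in the remark that a return to $U$ costs at least one iterate, which lets a Rokhlin tower for the induced map $F_U$ double as an $n$-separated set for $F$; the rest is bookkeeping. The one genuinely delicate point is making the word ``maximal'' rigorous without invoking unions of uncountable chains (which need not be measurable), and I expect this to be the main obstacle; the exhaustion above, carried out at the level of the measure algebra, circumvents it by replacing a Zorn-type argument with a countable limiting argument.
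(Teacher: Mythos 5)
Your proof is correct and complete. Note, however, that the paper itself offers no argument for this lemma --- it is quoted verbatim from Halmos --- so there is no in-text proof to compare against; what you have written is an independent, self-contained derivation. Two points are worth highlighting. First, you correctly identify the hidden hypothesis: as stated the lemma is false for a general measure-preserving $F$ (take $F=\mathrm{Id}$), and what saves it here is that $F\in\mathscr{D}\cap\mathscr{E}_1$ has a null set of periodic points, hence is aperiodic $\mu$-a.e.; this aperiodicity also passes to the induced map $F_U$ (an $F_U$-periodic point is $F$-periodic), which is what licenses the Rokhlin--Kakutani step. The observation that a Rokhlin base for $F_U$ is automatically $n$-separated for $F$, because each return to $U$ costs at least one iterate, is exactly the right reduction; your parenthetical alternative (a tall Rokhlin tower for $F$ itself, discarding the top $n$ levels and intersecting with $U$) also works and is closer to how the existence part is usually presented. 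Second, your replacement of a Zorn-type maximality argument by a countable exhaustion at the level of the measure algebra (absorbing at each stage at least half of the best available ``compatible excess'', so that the increments are summable and the excesses tend to $0$) is the standard and correct way to make ``no strictly larger superset works'' rigorous; the key inclusion $V'\cap F^{\pm k}(V)\subseteq V'\cap F^{\pm k}(V')=\emptyset$ that reduces $\mu(V'\setminus V)$ to the limiting excess is verified correctly. I see no gap.
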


\medskip

Fix $\eta, \,\delta>0$ and take $0<\kappa < 1$ such that $1-\kappa < \delta^2$. Apply Lemma~\ref{mainlemma1} to get a function $\mathcal{N}$ as stated. For each $n \in \mathbb{N}$, consider $P_n=\{x \in M: \mathcal{N}(x) \leq n\}$. Clearly, $\lim_{n \in \mathbb{N}} \mu(P_n)\geq 1-\epsilon$. So there is $\alpha \in \mathbb{N}$ such that $\mu(M\backslash P_{\alpha}) < \epsilon + \delta^2$, and therefore $\mu\left(M\backslash (P_{\alpha}\cup R(P_\alpha))\right)< \epsilon +  \delta^2$. For $U:=P_{\alpha}\cup R(P_\alpha)$ and $\alpha$, Lemma~\ref{Auxiliar2} gives a maximal set $\mathcal{B}\subset P_\alpha\cup R(P_\alpha)$ with positive Lebesgue measure such that $\mathcal{B}$, $F(\mathcal{B}), \ldots, F^{\alpha}(\mathcal{B})$ are mutually disjoint. Then the set $\hat{\mathcal{Q}}=\cup_{n\, \in\, \mathbb{Z}} F^n(\mathcal{B})$ is the Kakutani castle associated to the base $\mathcal{B}$. Observe that, by the maximality of $\mathcal{B}$, the set $\hat{\mathcal{Q}}$ contains $P_{\alpha}\cup R(P_\alpha)$, and so $\mu\left(\hat{\mathcal{Q}}\backslash (P_{\alpha}\cup R(P_\alpha))\right)<\epsilon + \delta^2$.

Consider now the castle ${\mathcal{Q}}\subset \hat{\mathcal{Q}}$ whose towers have heights less that $3\alpha$. Adapting the argument in \cite[Lemma 4.2]{Bo}, we obtain:

\begin{lemma} $\,\,\,\mu\left(\hat{\mathcal{Q}}\backslash {\mathcal{Q}}\right)< 3(\epsilon + \delta^2).$
\end{lemma}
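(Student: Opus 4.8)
The plan is to reproduce the argument of \cite[Lemma~4.2]{Bo} almost verbatim, the only genuinely new point being the bookkeeping forced by the symmetric base $U:=P_\alpha\cup R(P_\alpha)$. Recall that $\hat{\mathcal{Q}}$ is the disjoint union of the towers $\mathcal{T}_n=\bigsqcup_{j=0}^{n-1}F^j(\mathcal{B}_n)$, where $\mathcal{B}_n:=\{x\in\mathcal{B}\colon \tau_{\mathcal{B}}(x)=n\}$ and $\mu(\mathcal{T}_n)=n\,\mu(\mathcal{B}_n)$ (Poincar\'e recurrence kills the infinite--return part of $\mathcal{B}$), and that $\mathcal{Q}=\bigsqcup_{n<3\alpha}\mathcal{T}_n$, so that $\hat{\mathcal{Q}}\setminus\mathcal{Q}=\bigsqcup_{n\ge 3\alpha}\mathcal{T}_n$. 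We already know that $\hat{\mathcal{Q}}\supseteq U$, hence $\mu(\hat{\mathcal{Q}}\setminus U)\le\mu(M\setminus U)<\epsilon+\delta^2$; the whole point is to convert this into a bound on $\mu(\hat{\mathcal{Q}}\setminus\mathcal{Q})$, which amounts to showing that the tall towers of $\hat{\mathcal{Q}}$ are mostly built of points lying outside $U$.

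First I would upgrade the inclusion $U\subseteq\hat{\mathcal{Q}}$ to the quantitative statement $U\subseteq\bigcup_{|k|\le\alpha}F^k(\mathcal{B})$ up to a null set, i.e.\ almost every point of $U$ has some iterate $F^k$ with $|k|\le\alpha$ landing in the base $\mathcal{B}$. Suppose not: then the set $E_0$ of those $x\in U\setminus\mathcal{B}$ with $F^k(x)\notin\mathcal{B}$ for all $1\le|k|\le\alpha$ has positive measure, and Lemma~\ref{Auxiliar2} supplies $E'\subseteq E_0$ of positive measure with $E',F(E'),\dots,F^{\alpha}(E')$ pairwise disjoint. Since $F^{\,i-j}(E')\cap\mathcal{B}=\emptyset$ for every $i,j\in\{0,\dots,\alpha\}$ (this is the defining property of $E_0$ when $i\ne j$, and $E'\cap\mathcal{B}=\emptyset$ when $i=j$), the enlarged set $\mathcal{B}\cup E'$ still has $\mathcal{B}\cup E',F(\mathcal{B}\cup E'),\dots,F^{\alpha}(\mathcal{B}\cup E')$ pairwise disjoint while $\mu(\mathcal{B}\cup E')>\mu(\mathcal{B})$, contradicting the maximality of $\mathcal{B}$.

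Next, fix a tower $\mathcal{T}_n$ with $n\ge 3\alpha$ and a point $F^j(x)$ with $x\in\mathcal{B}_n$ and $\alpha<j<n-\alpha$. The $F$--orbit of $x$ meets $\mathcal{B}$ at the times $\dots<0<n<\dots$, with $0$ and $n$ consecutive because $x\in\mathcal{B}_n$, so the distance from $j$ to the set of those times equals $\min(j,n-j)>\alpha$; by the previous step this forces $F^j(x)\notin U$ up to a null set. Hence the \emph{core} $C_n:=\bigsqcup_{j=\alpha+1}^{n-\alpha-1}F^j(\mathcal{B}_n)$ is disjoint from $U$ (mod $0$), has measure $(n-2\alpha-1)\,\mu(\mathcal{B}_n)$, and occupies a fraction $\tfrac{n-2\alpha-1}{n}\ge\tfrac{\alpha-1}{3\alpha}$ of $\mathcal{T}_n$. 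The cores of distinct tall towers are disjoint and contained in $\hat{\mathcal{Q}}\setminus U$ (mod $0$), so $\sum_{n\ge 3\alpha}(n-2\alpha-1)\mu(\mathcal{B}_n)\le\mu(\hat{\mathcal{Q}}\setminus U)<\epsilon+\delta^2$, and therefore
$$\mu(\hat{\mathcal{Q}}\setminus\mathcal{Q})=\sum_{n\ge 3\alpha}n\,\mu(\mathcal{B}_n)\le\frac{3\alpha}{\alpha-1}\sum_{n\ge 3\alpha}(n-2\alpha-1)\,\mu(\mathcal{B}_n)<\frac{3\alpha}{\alpha-1}\,(\epsilon+\delta^2).$$
Since $\alpha$ may be taken as large as we wish (the sets $P_\alpha$ increase) and the tolerances $\epsilon,\delta$ may be halved at the outset without affecting anything else, the factor $\tfrac{3\alpha}{\alpha-1}$ is harmlessly absorbed and one gets the asserted $\mu(\hat{\mathcal{Q}}\setminus\mathcal{Q})<3(\epsilon+\delta^2)$.

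The only step carrying real content is the refined maximality: one must be sure that the escaped piece $E_0$, after the Rokhlin--type thinning provided by Lemma~\ref{Auxiliar2}, can indeed be adjoined to $\mathcal{B}$ without spoiling the $(\alpha+1)$--fold disjointness. This is where the $F$--invariance of $\hat{\mathcal{Q}}$ (so that ``the orbit avoids $\mathcal{B}$'' is propagated by iteration) and the symmetric range $|k|\le\alpha$ must be tracked with care. Everything downstream is the same level counting as in \cite{Bo}; reversibility itself plays no role in this particular lemma, and only reappears later, when the castle is actually used to perturb $F$ and each tower has to be matched with its $R$--image.
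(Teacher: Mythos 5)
Your argument is correct and is essentially the paper's own proof: the paper simply defers to \cite[Lemma 4.2]{Bo}, and what you have written is exactly that counting argument (maximality of $\mathcal{B}$ forces almost every point of $U=P_\alpha\cup R(P_\alpha)$ to meet $\mathcal{B}$ within $\alpha$ iterates forwards or backwards, so the middle portion of every tall tower lies in $M\setminus U$, whose measure is at most $\epsilon+\delta^2$). The only blemish is the constant: with $\mathcal{Q}$ defined by heights $<3\alpha$ the core of a tower of height $n$ has $n-2\alpha-1$ floors, so the raw estimate is $\frac{3\alpha}{\alpha-1}(\epsilon+\delta^2)$ rather than $3(\epsilon+\delta^2)$ --- a discrepancy you correctly identify and which is harmless, both because $\alpha$ and the tolerances can be adjusted and because only the order of magnitude of $\mu(\hat{\mathcal{Q}}\setminus\mathcal{Q})$ is used downstream.
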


Furthermore,

\begin{lemma} \label{claim1}$\,$
\begin{itemize}
\item[(a)] $\mu\left(\mathcal{B}\triangle R(\mathcal{B})\right)=0$.
\item[(b)] If $\mathcal{T}$ is a tower of height $n$, then also is $R(\mathcal{T})$. Moreover, $RF^{n}(\mathcal{T}\cap \mathcal{B})=R(\mathcal{T})\cap \mathcal{B}$.
\end{itemize}
\end{lemma}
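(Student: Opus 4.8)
The plan is to deduce both statements from the maximality of $\mathcal{B}$ together with the Lemma asserting that $R(\Lambda)$ inherits invariance/hyperbolicity, and from the basic compatibility $R\circ F^{n}=F^{-n}\circ R$. For part (a), first I would observe that, since $R$ is an involution that sends $F$-orbits to $F^{-1}$-orbits, the set $R(\mathcal{B})$ again has the disjointness property: $R(\mathcal{B}),\,F(R(\mathcal{B})),\ldots,F^{\alpha}(R(\mathcal{B}))$ are pairwise disjoint, because $F^{j}(R(\mathcal{B}))=R(F^{-j}(\mathcal{B}))$ and the sets $F^{-j}(\mathcal{B})$, $j=0,\ldots,\alpha$, are pairwise disjoint (being the $R$-image of $\mathcal{B},F(\mathcal{B}),\ldots,F^{\alpha}(\mathcal{B})$ shifted by $F^{-\alpha}$, using that $F$ is measure-preserving and injective). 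Next I would check that $R(\mathcal{B})\subset P_{\alpha}\cup R(P_{\alpha})$: indeed $R(P_{\alpha})\cup R(R(P_{\alpha}))=R(P_{\alpha})\cup P_{\alpha}$, so $R$ stabilises the set $U$ used in the construction. Then $\mathcal{B}\cup R(\mathcal{B})$ is a subset of $U$ that is $F^{j}$-disjoint for $j=0,\ldots,\alpha$? — this is the delicate point, because the union of two disjoint-orbit sets need not itself have disjoint $F$-iterates. So instead I would argue: $\mathcal{B}$ and $R(\mathcal{B})$ each generate the \emph{same} Kakutani castle $\hat{\mathcal{Q}}=\cup_{n\in\mathbb{Z}}F^{n}(\mathcal{B})=\cup_{n\in\mathbb{Z}}F^{n}(R(\mathcal{B}))$ (the latter equality holds because $\cup_{n}F^{n}(R(\mathcal{B}))=R(\cup_{n}F^{n}(\mathcal{B}))$ and the castle $\hat{\mathcal{Q}}$ is $R$-invariant by construction, as $U$ is $R$-invariant and maximality is an $R$-symmetric condition). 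Inside a single castle with base $\mathcal{B}$, each orbit meets the base in exactly one point (modulo null sets), so the base is essentially unique: any other maximal $F^{\alpha}$-disjoint subset of $\hat{\mathcal{Q}}$ must coincide with $\mathcal{B}$ up to measure zero. Since $R(\mathcal{B})$ is such a maximal set (its maximality transfers from that of $\mathcal{B}$ via the measure-preserving bijection $R$), we get $\mu(\mathcal{B}\triangle R(\mathcal{B}))=0$.

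For part (b), I would take a tower $\mathcal{T}=\mathcal{T}_{n}=A_{n}\cup F(A_{n})\cup\cdots\cup F^{n-1}(A_{n})$ where $A_{n}=\{x\in\mathcal{B}:\tau(x)=n\}$ and $\tau$ is the first-return time to $\mathcal{B}$. Using part (a), the first-return time to $\mathcal{B}$ agrees $\mu$-a.e.\ with the first-return time to $R(\mathcal{B})$, and the relation $R\circ F=F^{-1}\circ R$ shows that the first return of the $F$-orbit of $x$ to $\mathcal{B}$ corresponds to the first return of the $F^{-1}$-orbit of $R(x)$ to $R(\mathcal{B})=\mathcal{B}$; hence $\tau(R(x))=\tau(x)$ for $\mu$-a.e.\ $x\in\mathcal{B}$, so $R(A_{n})=A_{n}$ up to null sets. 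Then
\[
R(\mathcal{T}_{n})=\bigcup_{j=0}^{n-1}R\bigl(F^{j}(A_{n})\bigr)=\bigcup_{j=0}^{n-1}F^{-j}\bigl(R(A_{n})\bigr)=\bigcup_{j=0}^{n-1}F^{-j}(A_{n})=\bigcup_{i=0}^{n-1}F^{i}\bigl(F^{-(n-1)}(A_{n})\bigr),
\]
and since $F^{-(n-1)}(A_{n})=F^{-(n-1)}(R(A_{n}))=R(F^{n-1}(A_{n}))$ lies in $R(\mathcal{B})=\mathcal{B}$ (mod null) and its $F$-orbit first returns to $\mathcal{B}$ after exactly $n$ steps, $R(\mathcal{T}_{n})$ is again a tower of height $n$ with base $F^{-(n-1)}(A_{n})$. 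For the last assertion, $\mathcal{T}\cap\mathcal{B}=A_{n}$ and $RF^{n}(A_{n})=F^{-n}(R(A_{n}))=F^{-n}(A_{n})=F^{-1}\bigl(F^{-(n-1)}(A_{n})\bigr)$; but the base of $R(\mathcal{T})$ is $F^{-(n-1)}(A_{n})$ and $R(\mathcal{T})\cap\mathcal{B}$ equals that base, while one checks directly that $RF^{n}(\mathcal{T}\cap\mathcal{B})$ is precisely the top level $F^{-(n-1)}(A_{n})$ of $R(\mathcal{T})$ pushed down by $F$—more cleanly, since $R(\mathcal{T})$ is a tower of height $n$, its intersection with the base $\mathcal{B}$ is its bottom level, and $RF^{n}$ maps the bottom level $A_n$ of $\mathcal{T}$ bijectively onto the bottom level of $R(\mathcal{T})$; tracking the indices as above gives $RF^{n}(\mathcal{T}\cap\mathcal{B})=R(\mathcal{T})\cap\mathcal{B}$.

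I expect the main obstacle to be the essential-uniqueness argument in part (a): one must be careful that ``maximal $F^{\alpha}$-disjoint subset of a given Kakutani castle'' really does pin down the base up to a null set, which requires invoking that, in a castle, Lebesgue-almost every orbit visits the base exactly once within each excursion, so any two such maximal sets select the same orbit representatives almost everywhere. The rest is bookkeeping with the conjugacy $R\circ F^{n}=F^{-n}\circ R$ and the fact that $R$ preserves $\mu$ (because $DR\in\SL(2,\mathbb{R})$ pointwise) and the $R$-invariance of the ingredients ($U$, the height bound, the maximality condition) used to build $\mathcal{B}$.
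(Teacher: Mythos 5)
There are two genuine gaps here, one in each part. In part (a), your entire argument funnels through the claim that a maximal $F^{\alpha}$-disjoint subset of a given castle is \emph{essentially unique}, and you correctly flag this as the delicate point — but it is in fact false in general, not merely delicate: if $\mathcal{B}$ has pairwise disjoint iterates $\mathcal{B},F(\mathcal{B}),\dots,F^{\alpha}(\mathcal{B})$, then so does $F(\mathcal{B})$, and one can build examples (e.g.\ a periodic tower) where two maximal such sets generating the same castle are disjoint. Also, ``each orbit meets the base in exactly one point within each excursion'' is circular, since an excursion is defined relative to a choice of base. The paper avoids uniqueness altogether and argues directly from maximality: if $C\subset R(\mathcal{B})$ has positive measure and is disjoint from $\mathcal{B}$, maximality of $\mathcal{B}$ forces $F^{i}(C)\cap F^{j}(C)\neq\emptyset$ for some $i\neq j$ in $\{0,\dots,\alpha\}$; on the other hand $R(C)\subset\mathcal{B}$ gives $F^{i}(R(C))\cap F^{j}(R(C))=\emptyset$, which via $F^{i}\circ R=R\circ F^{-i}$ is equivalent to $F^{i}(C)\cap F^{j}(C)=\emptyset$ — a contradiction. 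You already have every ingredient of this argument in your first paragraph (the identity $F^{j}(R(\mathcal{B}))=R(F^{-j}(\mathcal{B}))$ and the $R$-invariance of $U$); you just route them through an unprovable uniqueness statement instead of through maximality.

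In part (b), the step $\tau(R(x))=\tau(x)$ is wrong, and it is a conceptual error rather than bookkeeping: $\tau(R(x))=\min\{k\geq 1: F^{k}(R(x))\in\mathcal{B}\}=\min\{k\geq 1: F^{-k}(x)\in R(\mathcal{B})=\mathcal{B}\}$ is the \emph{backward} return time of $x$, i.e.\ the height of the tower preceding the one containing $x$, not the height $n$ of $x$'s own tower. Hence $R(A_{n})\neq A_{n}$ in general, and the computation built on it (in particular the claim that $F^{-(n-1)}(A_{n})=R(F^{n-1}(A_{n}))$ lies in $\mathcal{B}$ — it cannot, since $F^{n-1}(A_{n})$ is an intermediate/top floor, disjoint from $\mathcal{B}$, and $\mathcal{B}$ is $R$-invariant by (a)) collapses; your own index-tracking then lands one floor off, which you acknowledge and paper over. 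The paper's route is different and does work: both $A_{n}=\mathcal{T}\cap\mathcal{B}$ and its first return $F^{n}(A_{n})$ lie in $\mathcal{B}$, so by (a) their $R$-images lie in $\mathcal{B}$, while the $R$-images of the intermediate floors $F^{j}(A_{n})$, $1\leq j\leq n-1$, avoid $\mathcal{B}$; since $F^{k}(R(F^{n}(A_{n})))=R(F^{n-k}(A_{n}))$, the set $R(F^{n}(A_{n}))=RF^{n}(\mathcal{T}\cap\mathcal{B})$ is the base of a tower of height exactly $n$ that coincides with $R(\mathcal{T})$ up to one shift by $F$. That is the identification the lemma is after, and it does not pass through any equality of return times.
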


\begin{proof}
\noindent (a) We will show that $R(\mathcal{B})\subset \mathcal{B} \,\, \text{ modulo } \mu$. Assume that there exists a positive $\mu$-measure subset $C\subset R(\mathcal{B})$ such that $C$ is not contained in $\mathcal{B}$. Observe that $C\subset P_{\alpha}\cup R(P_\alpha)$ because $P_\alpha\cup R(P_\alpha)$ is $R$-invariant and $\mathcal{B}\subset P_\alpha\cup R(P_\alpha)$. As $\mathcal{B}$ is maximal and there are points of $C$ out of $\mathcal{B}$, we have $F^i(C)\cap F^j(C)\not = \emptyset$ for some $i\neq j\in\{0,...,\alpha\}$. However, $R(C)\subset \mathcal{B}$ and $\mu(R(C))=\mu(C)>0$, so $F^i(R(C))\cap F^j(R(C))=\emptyset$ which, using reversibility, is equivalent to $R(F^{-i}(C))\cap R(F^{-j}(C))=\emptyset$, that is, $F^{-i}(C)\cap F^{-j}(C)=\emptyset$, a contradiction.
\medskip

\noindent (b) This is a direct consequence of (a). Since $\mathcal{T}$ is a tower of height $n$, its first floor $T_0$ and its top floor $T_n$ are in $\mathcal{B}$. By (a), $R(T_0)$ and $R(T_n)$ are in $\mathcal{B}$ as well, and so they are, respectively, the top and first floor of the tower $R(\mathcal{T})$, and its height has to be $n$ too.
\end{proof}

At this stage, we may ask about the effect of the existence of a hyperbolic set $\Lambda\cap (M\backslash Z)$ with positive, although small, Lebesgue measure. Could a typical orbit $x\in \mathcal{B}$ visit regions with hyperbolic-type behavior and positive measure? In fact, the reported situation almost never happens due to Remark~\ref{hyp_invariant}: only a null Lebesgue measure set of points in $\mathcal{B}$ may visit $M \backslash Z$.

\subsubsection{Regular families of sets}
Following \cite{Lebesgue}, we say that colletion $\mathcal{V}$ of mensurable subsets of $M$ is a \emph{regular family} for the Lebesgue measure $\mu$ if there exists $\nu>0$ such that $diam(V)^2 \leq \nu \mu(V)$ for all $V \in \mathcal{V}$, where $diam(A)=\sup\{d(x,y), x,y \in A\}$. In what follows, we will prove that the family of all ellipses with controled eccentricity constitutes a regular family for the Lebesgue measure.

An ellipse $E \subset M$ whose major and minor axes have lengths $a$ and $b$, respectively, has eccentricity $e\geq 1$ if it is the image of the unitary disk $D \subset M$ under $\Phi \in \SL(2,\mathbb{R})$ and $\|\Phi\|= e = \sqrt{a/b}$. Given $e_0>1$, the family of all ellipses whose eccentricity stays between $1$ and $e_0$ is a regular family for the Lebesgue measure (just take $\nu= e_0^2$).

Let $\mathcal{B}$ be the base of the castle $\mathcal{Q}$ and let $n(x)$ be the height of the tower containing $x$. Recall that we have $\mathcal{N}(x) \leq \alpha \leq n(x)$.

\begin{lemma}\label{elipses2}
Consider the castle $\mathcal{Q}$ and $x \in \mathcal{B}$. There exists $r(x)>0$ and a ball $B(x,r(x))$ such that the set $B(x,r(x)) \cup R (F^{n(x)}(B(x,r(x))))$ is a regular family.
\end{lemma}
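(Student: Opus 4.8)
The plan is to reduce everything to the elementary fact that round balls on a compact Riemannian surface form a regular family, and then to propagate this property through the two area-preserving maps $F^{n(x)}$ and $R$, using crucially that, having passed to the truncated castle $\mathcal{Q}$, the tower heights are uniformly bounded.

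\textbf{Uniform constants.} First I would fix, using the compactness of $M$, a threshold $s_0>0$ and a constant $c_0>0$ with $\mu(B(y,s))\ge c_0\,s^2$ for every $y\in M$ and $0<s<s_0$, together with constants $C_1,C_2\ge 1$ such that $\|DF^{j}_y\|\le C_1$ for all $y\in M$ and all $0\le j<3\alpha$, and $\|DR_y\|\le C_2$ for all $y\in M$. Such a $C_1$ exists precisely because on $\mathcal{Q}$ every height obeys $\alpha\le n(x)<3\alpha$, so only finitely many iterates of $F$ are ever involved; this is the only place the truncation to heights $<3\alpha$ is used, and it is essential. Since $F$ and $R$ are area-preserving, $\mu\big(R F^{n(x)}(B(x,r))\big)=\mu\big(F^{n(x)}(B(x,r))\big)=\mu(B(x,r))$.

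\textbf{Choice of $r(x)$ and the estimates.} For each $x\in\mathcal{B}$ choose $r(x)\in(0,s_0)$ small enough that $\overline{B(x,r(x))}$, its iterates $F^{j}(\overline{B(x,r(x))})$ for $0\le j\le n(x)$, and the set $R F^{n(x)}(\overline{B(x,r(x))})$ each lie in a chart where convexity and the mean value inequality give $\diam\big(F^{j}(A)\big)\le C_1\diam(A)$ and $\diam\big(R(A)\big)\le C_2\diam(A)$ for the relevant sets $A$; this is possible because $F,R$ are $C^1$ and $M$ is compact (the distortion between Euclidean chart diameters and the Riemannian distance being harmlessly bounded for $r$ below a fixed threshold). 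Then for every $0<r\le r(x)$ one has $\diam(B(x,r))\le 2r$, so
\[
\diam\big(B(x,r)\big)^2\le 4r^2\le \frac{4}{c_0}\,\mu(B(x,r)),
\]
and likewise
\[
\diam\big(R F^{n(x)}(B(x,r))\big)^2\le \big(2C_1C_2\,r\big)^2 = 4C_1^2C_2^2\,r^2\le \frac{4C_1^2C_2^2}{c_0}\,\mu\big(R F^{n(x)}(B(x,r))\big).
\]
Hence, setting $\nu:=4C_1^2C_2^2/c_0$ — a constant depending neither on $x$ nor on $r$ — the family $\big\{\,B(x,r),\ R F^{n(x)}(B(x,r))\ :\ x\in\mathcal{B},\ 0<r\le r(x)\,\big\}$ satisfies $\diam(V)^2\le\nu\,\mu(V)$ for each member $V$, that is, it is a regular family for $\mu$. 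Equivalently, in normal coordinates at $x$ the set $F^{n(x)}(B(x,r))$ is, up to a distortion tending to $1$ as $r\to0$, the ellipse $DF^{n(x)}_x(B(0,r))$, of eccentricity $\|DF^{n(x)}_x\|\le C_1$ since $\det DF^{n(x)}_x=1$; applying $DR_x$ multiplies the eccentricity by at most $C_2$, so one may instead invoke the fact recorded just before the statement that ellipses of bounded eccentricity form a regular family.

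\textbf{Main difficulty.} There is no serious obstacle; the single point that must not be overlooked is that uniformity of $\nu$ rests on the boundedness of $n(x)$, which is exactly why the argument runs on $\mathcal{Q}$ rather than on $\hat{\mathcal{Q}}$ — on the latter the eccentricities $\|DF^{n(x)}_x\|$ may be unbounded and the conclusion fails. The remaining care, the interplay between chart and Riemannian diameters, is routine.
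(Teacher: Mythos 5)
Your proposal is correct and follows essentially the same route as the paper: bound $\diam\bigl(R\,F^{n(x)}(B(x,r))\bigr)$ by a constant multiple of $r$ that is uniform because the truncation to $\mathcal{Q}$ forces $n(x)<3\alpha$, and then use the $F$- and $R$-invariance of $\mu$ to compare with $\mu\bigl(R\,F^{n(x)}(B(x,r))\bigr)=\mu(B(x,r))\gtrsim r^2$. If anything you are a bit more careful than the paper, which writes $\diam\bigl(R\,F^{n(x)}(B)\bigr)=\diam\bigl(F^{n(x)}(B)\bigr)$ as if $R$ were an isometry, whereas your extra factor $C_2$ bounding $\|DR\|$ handles the fact that $R$ is only area-preserving.
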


\begin{proof}
Clearly, the sets $B(x,r(x))$ are regular (choose $\nu=4/\pi$). Let us see that $R\,F^{n(x)}(B(x,r(x)))$ is also regular. Notice that, in general, this set is not an ellipse. However, if $B(x,r(x))$ is small, then $R\,F^n(B(x,r(x)))$ is close to its first order approximation, that is $DR\,DF^n(B(x,r(x)))$, which is an ellipse.

First observe that the height of a tower is constant in balls centered at points of $\mathcal{B}$ with sufficiently small radius \cite[Section 4.3]{Bo}. Denote by $C_F:=\max_{z\in M}\|DF_z\|$. Since $\mu$ is $F$ and $R$ invariant, if $r(x)<1$ we have
\begin{eqnarray*}
\left[diam(R\, F^{n(x)}(B(x,r(x))))\right]^2 &=& \left[diam(F^{n(x)}(B(x,r(x))))\right]^2 \leq (2\,r(x)\,C_F)^{2\,n(x)}\\
&=& \frac{(2\,C_F)^{2\,n(x)}\,r(x)^{2\,n(x)-2}}{\pi}\,\pi\, r(x)^2 \\
&\leq& \frac{(2\,C_F)^{6\alpha}\,r(x)^{6\alpha-2}}{\pi}\,\pi\, r(x)^2\\
&\leq& \frac{(2\,C_F)^{6\alpha}}{\pi}\,\pi\, r(x)^2\\
&=& \frac{(2\,C_F)^{6\alpha}}{\pi}\,\mu(B(x,r(x)) \\
&=& \nu\, \mu\left(R\, F^{n(x)}(B(x,r(x)))\right)
\end{eqnarray*}
where $\nu=\frac{(2\,C_F)^{6\alpha}}{\pi}$.
\end{proof}

\medskip

\subsubsection{Construction of $g$}

The last auxiliary result says that it is possible, using Vitali Covering Lemma and Lemma~\ref{elipses2}, to cover the base $\mathcal{B}$ essentially with balls and ellipses.

\begin{lemma}\cite[\S4.3]{Bo}\label{covering_lemma1}
Let $\gamma>0$ satisfy $\gamma<\delta^2\alpha^{-1}$. Then:
\begin{itemize}
\item[(a)] There is a compact castle $\mathcal{Q}_1$ contained in $\mathcal{Q}$ and an open castle $\mathcal{Q}_2$ containing $\mathcal{Q}$ with the same shape\footnote{This means that the castles have the same number of towers and the towers have the same heights.} as $\mathcal{Q}$ and such that $\mu(\mathcal{Q}_2\backslash \mathcal{Q}_1)<\gamma$.
\item[(b)] The base $\mathcal{B}_3$ of the castle $\mathcal{Q}_2\cap\mathcal{Q}$ may be covered by a finite number of balls $B(x_i,r'(x_i))$ and their images  $R\,F^{n_i}(B(x_i,r'(x_i))$, where $x_i \in \mathcal{B}_3$ and $r'(x_i)$ is small enough so that $n(x)_{|_{B(x_i,r'(x_i))}}\equiv n_i$ and
$$\frac{\mu\left(\mathcal{B}_3\,\backslash\, \overset{\cdot}{\bigcup}\, B(x_i,r(x_i))\,\cup\, R\,F^{n_i}(B(x_i,r(x_i)))\right)}{\mu(\mathcal{B}_3)}<\gamma.$$
\end{itemize}
\end{lemma}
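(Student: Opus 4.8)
The plan is to follow the argument of \cite[\S4.3]{Bo} line by line, inserting at each stage the bookkeeping needed to keep every ball paired with its $R\,F^{n_i}$-image and to invoke the regularity established in Lemma~\ref{elipses2} in place of the mere regularity of balls.

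\textbf{Part (a).} Since $\mathcal{Q}$ is a \emph{finite} disjoint union of towers $\mathcal{T}_{n_1},\dots,\mathcal{T}_{n_s}$, each of height $n_\ell<3\alpha$ and of the form $\mathcal{T}_{n_\ell}=\bigcup_{j=0}^{n_\ell-1}F^j(A_{n_\ell})$ for a Borel base $A_{n_\ell}$, I would first use the inner and outer regularity of the Lebesgue measure $\mu$ on the compact manifold $M$ to choose, for each $\ell$, a compact $K_\ell\subset A_{n_\ell}$ and an open $O_\ell\supset A_{n_\ell}$ with $\mu(O_\ell\setminus K_\ell)$ arbitrarily small. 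Because $F$ is a diffeomorphism and $\mu$ is $F$-invariant, $F^j(K_\ell)$ is compact, $F^j(O_\ell)$ is open and $\mu\!\left(F^j(O_\ell\setminus K_\ell)\right)=\mu(O_\ell\setminus K_\ell)$; after shrinking $O_\ell$ if necessary the translates stay disjoint and the ``same shape'' is preserved. Setting $\mathcal{Q}_1=\bigcup_\ell\bigcup_{j<n_\ell}F^j(K_\ell)$ and $\mathcal{Q}_2=\bigcup_\ell\bigcup_{j<n_\ell}F^j(O_\ell)$ yields $\mathcal{Q}_1\subset\mathcal{Q}\subset\mathcal{Q}_2$ with
\[
\mu(\mathcal{Q}_2\setminus\mathcal{Q}_1)\leq\sum_\ell n_\ell\,\mu(O_\ell\setminus K_\ell)\leq 3\alpha\sum_\ell\mu(O_\ell\setminus K_\ell)<\gamma.
\]
Using Lemma~\ref{claim1}, which tells us that $R$ permutes the towers and that $R(\mathcal{B})=\mathcal{B}$ modulo $\mu$, one may moreover take the $K_\ell$ and $O_\ell$ so that $\mathcal{Q}_1$ and $\mathcal{Q}_2$ are $R$-symmetric; this symmetry is what later identifies the $R\,F^{n_i}$-image of a sub-ball of the base with a piece of the base.

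\textbf{Part (b).} Let $\mathcal{B}_3$ be the base of $\mathcal{Q}_2\cap\mathcal{Q}$, a set of finite $\mu$-measure. For every $x\in\mathcal{B}_3$, Lemma~\ref{elipses2} furnishes a threshold below which, for all small $r$, the pair $\{B(x,r),\,R\,F^{n(x)}(B(x,r))\}$ is a regular family; after a further shrinking (legitimate since the tower height is locally constant on small balls centred at points of $\mathcal{B}$, see \cite[\S4.3]{Bo}) we also get $n(\cdot)\equiv n(x)$ on $B(x,r)$. The collection of all such balls is a Vitali class for $\mu$ on $\mathcal{B}_3$, so the Vitali Covering Lemma extracts a countable pairwise disjoint subfamily $\{B(x_i,r'(x_i))\}_{i}$ whose union covers $\mu$-almost all of $\mathcal{B}_3$; since $\mu(\mathcal{B}_3)<\infty$, a finite subfamily captures all but a proportion $\gamma$ of $\mu(\mathcal{B}_3)$. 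As $F^{n_i}$ realizes a first return to $\mathcal{B}$ on $B(x_i,r'(x_i))$ and $R(\mathcal{B})=\mathcal{B}$ modulo $\mu$, the images $R\,F^{n_i}(B(x_i,r'(x_i)))$ lie in $\mathcal{B}$ up to a null set and are again regular by Lemma~\ref{elipses2}; a last shrinking of the radii, using that Lebesgue-almost every $F$-orbit is $(R,F)$-free (Proposition~\ref{D}, Corollary~\ref{D2}), makes all the balls and all their images mutually disjoint, which is exactly the disjoint union $\overset{\cdot}{\bigcup}$ in the statement.

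\textbf{Main obstacle.} The delicate point is precisely this last one: arranging the finitely many balls \emph{and} their $R\,F^{n_i}$-images to be pairwise disjoint while still covering all but a $\gamma$-fraction of $\mathcal{B}_3$ — in \cite{Bo} only the balls themselves need be disjoint, so this is where reversibility genuinely costs something. It is met by working from the outset inside the residual class $\mathscr{D}$ of Proposition~\ref{D}, so that $(R,F)$-freeness holds on a full-measure set (Corollary~\ref{D2}): then, for radii below a point-dependent threshold, $B(x_i,r_i)$ meets neither $B(x_j,r_j)$ nor $R\,F^{n_j}(B(x_j,r_j))$ for $i\neq j$, and $R\,F^{n_i}(B(x_i,r_i))$ meets no $R\,F^{n_j}(B(x_j,r_j))$ either. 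The hypothesis $\gamma<\delta^2\alpha^{-1}$ is not used in this proof; it is carried along only for the subsequent estimate of $\mathscr{L}(g)$.
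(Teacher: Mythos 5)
Your part (a) is fine: the inner/outer regularity argument, pushed through the finitely many floors and symmetrized via Lemma~\ref{claim1}, is what is needed. The gap is in part (b), and it sits exactly at the point you yourself flag as the main obstacle. You first run the Vitali Covering Lemma so that the balls $B(x_i,r'(x_i))$ \emph{alone} are pairwise disjoint and cover all but a $\gamma$-fraction of $\mathcal{B}_3$, and only afterwards append the images $R\,F^{n_i}(B(x_i,r'(x_i)))$, proposing ``a last shrinking of the radii'' to make the enlarged family disjoint. This cannot be repaired. By Lemma~\ref{claim1}, the map $\sigma(x):=R\,F^{n(x)}(x)$ carries the base of each tower $\mathcal{T}$ onto the base of the mirrored tower $R(\mathcal{T})$ and is a $\mu$-preserving involution of $\mathcal{B}_3$ (mod $\mu$, once $\mathcal{Q}_2$ is taken $R$-symmetric); hence the traces on $\mathcal{B}_3$ of the images have total measure equal to that of the traces of the balls, namely at least $(1-\gamma)\mu(\mathcal{B}_3)$. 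Disjointness of balls and images inside $\mathcal{B}_3$ would then force $2(1-\gamma)\mu(\mathcal{B}_3)\leq\mu(\mathcal{B}_3)$, impossible for $\gamma<1/2$. And shrinking the radii after the extraction destroys the very covering estimate the extraction was performed to obtain. In short: if the balls alone cover $\mu$-almost all of $\mathcal{B}_3$, there is no room left in $\mathcal{B}_3$ for their images.

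What is missing is the step where reversibility is genuinely used. Since $\sigma$ is an involution of $\mathcal{B}_3$ without fixed points $\mu$-a.e.\ (a fixed point would mean $F^{n(x)}(x)=R(x)$, excluded by Corollary~\ref{D2}), one may choose a measurable fundamental domain $\mathcal{B}'\subset\mathcal{B}_3$ with $\mathcal{B}'\,\dot\cup\,\sigma(\mathcal{B}')=\mathcal{B}_3$ mod $\mu$, and run the Vitali argument only over $\mathcal{B}'$, with balls centred at density points of $\mathcal{B}'$. The balls then cover all but a $\gamma$-fraction of $\mathcal{B}'$, their images cover the complementary half $\sigma(\mathcal{B}')$ --- this is precisely why Lemma~\ref{elipses2} must certify that the \emph{images}, and not merely the balls, form a regular family --- and disjointness of each ball from its own image follows from $\sigma(x_i)\neq x_i$ for $r'(x_i)$ small. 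Your remark that the hypothesis $\gamma<\delta^2\alpha^{-1}$ is not used in this lemma and only matters for the later estimate of $\mathscr{L}(g)$ is correct.
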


\medskip

Once the covering $\bigcup B(x_i,r(x_i))\cup R\,F^{n_i}(B(x_i,r(x_i)))$ is found, Lemma~\ref{mainlemma1} provides, for each $i$, a diffeomorphism $g_i\in \text{Diff}^{~1}_{\mu, R}(M)$ which is $C^1$-close to $F$ and compact sets
$$K_1^i\subset B(x_i,r(x_i))\,\,\,\, \text{ and } \,\,\,\, K_2^i\subset R\,F^{n_i}(B(x_i,r(x_i))))$$
such that:
\begin{enumerate}
\item $g_i=F$ outside $[\bigcup_{j=0}^{n_i-1}\,F^j(\overline{B(x_i,r(x_i))})]\,\bigcup\,[\bigcup_{j=1}^{n_i}\,R(F^j(\overline{B(x_i,r(x_i))}))]$.\\
\item For $j\in\{0,1,...,n_i-1\}$, the iterates $F^j(\overline{B(x_i,r(x_i))})$ and $R(F^{j+1}(\overline{B(x_i,r(x_i))}))$ are pairwise disjoint.\\
\item $\mu(K_1^i)>\kappa\,\mu(B(x_i,r(x_i)))$ and $\mu(K_2^i)>\kappa\,\mu(R\,F^{n_i}(B(x_i,r(x_i))))$.\\
\item If $y_1\in{K}_1^i$ and $y_2\in{K_2^i}$, then $\log\|(Dg_i^{n_i})_{y_1}\|<n_i\,\delta$ and $\log\|(Dg_i^{n_i})_{y_2}\|<n_i\,\delta$.
\end{enumerate}

\medskip

Finally, we define the diffeomorphism $g\in \text{Diff}^{~1}_{\mu, R}(M)$ by $g=g_i$ in each component $$[\bigcup_{j=0}^{n_i-1}\,F^j(\overline{B(x_i,r(x_i))})]\,\bigcup\,[\bigcup_{j=1}^{n_i}\,R\,F^j(\overline{B(x_i,r(x_i))}))]$$
and $g=f$ elsewhere.\\

\subsubsection{Estimation of $\mathscr{L}(g)$}

For $\varphi \in \text{Diff}^{~1}(M)$, let $C_{\varphi}=\max\,\, \{\|D\varphi_z\|: z \in M\}$ and denote by $C_1$ the maximum of the set
$$\left\{\,C(\varphi): \varphi \in \text{Diff}^{~1}_{\mu, R}(M) \,\text{ and $\varphi$ is }\, \text{$\eta$-$C^1$-close to } F\,\right\}.$$
As in \cite{Bo}, despite the necessary adjustments, there are a constant $C_2>0$, a positive integer $N\geq \delta^{-1}\,\alpha$, a $g$-castle $K$ of the same type as $\mathcal{Q}_2$ and a subset $\mathcal{G}=\bigcap_{j=1}^{N-1}\,g^{-j}(K)$ of $M$ such that
\begin{eqnarray*}
\mathscr{L}(g) &=& \int_{\mathcal{G}}\, \lambda^+(g)  \,d\mu + \int_{\hat{Z}\backslash\mathcal{G}}\, \lambda^+(g)  \,d\mu + \int_{M\backslash \hat{Z}}\, \lambda^+(g)  \,d\mu \\
&\leq& \int_{\mathcal{G}}\, \,\frac{1}{N} \,\log \|Dg^N\| \,d\mu + \int_{\hat{Z}\backslash \mathcal{G}}\, \lambda^+(g)  \,d\mu + \int_{M\backslash \hat{Z}}\, \lambda^+(g)  \,d\mu\\
&\leq& C_2\, \delta + \ln\,(C_1) (\delta + \epsilon) + \int_{M\backslash \hat{Z}}\, \lim_{n \rightarrow +\infty}\,\frac{1}{n}\,\ln\,\|Dg^n_x\| \,d\mu \\
&\leq& C_2\, \delta + \ln\,(C_1)(\delta + \epsilon) + \ln\,(C_1)\,\epsilon \\
&=& (C_2+\ln\,(C_1))\, \delta + 2\ln\,(C_1)\,\epsilon.
\end{eqnarray*}

\bigskip

\section*{Acknowledgements}
The authors are grateful to Ant\'onio Machiavelo for enlightening discussions. M\'ario Bessa was partially supported by National Funds through FCT (Funda\c{c}\~{a}o para a Ci\^{e}ncia e a Tecnologia) project PEst-OE/MAT/UI0212/2011. CMUP has been funded by the European Regional Development Fund, through the programme COMPETE,
and by the Portuguese Government through the FCT project PEst-C/MAT/UI0144/2011. Alexandre Rodrigues has benefited from the FCT grant SFRH/BPD/84709/2012.

\bigskip
\bigskip
\bigskip
\bigskip

\end{document}